\providecommand{\U}[1]{\protect\rule{.1in}{.1in}}
\newtheorem{theo}{Theorem}[section]
\newtheorem{lem}[theo]{Lemma}
\newtheorem{cor}[theo]{Corollary}
\newtheorem{rem}[theo]{Remark}
\numberwithin{equation}{section}
\newcommand{\CC}{\mathbb{C}}
\newcommand{\EE}{\mathbb{E}}
\newcommand{\NN}{\mathbb{N}}
\newcommand{\PP}{\mathbb{P}}
\newcommand{\QQ}{\mathbb{Q}}
\newcommand{\RR}{\mathbb{R}}
\newcommand{\ZZ}{\mathbb{Z}}
\newcommand{\Ca}{ {\mathcal C }}
\newcommand{\Ra}{ {\mathcal R }}
\newcommand{\Fa}{ {\mathcal F }}
\newcommand{\Ga}{ {\mathcal G }}
\newcommand{\hide}[1]{}
\title{Tail universality of critical Gaussian multiplicative chaos}
\author{Mo Dick Wong\\
\\
Mathematical Institute, University of Oxford}
\date{\today}
\begin{document}
\maketitle

\abstract{In this article we study the tail probability of the mass of critical Gaussian multiplicative chaos (GMC) associated to a general class of log-correlated Gaussian fields in any dimension, including the Gaussian free field (GFF) in dimension two. More precisely, we derive a fully explicit formula for the leading order asymptotics for the tail probability and demonstrate a new universality phenomenon. Our analysis here shares similar philosophy with the subcritical case but requires a different approach due to complications in the analogous localisation step, and we also employ techniques from recent studies of fusion estimates in GMC theory.}

\tableofcontents

\section{Introduction} \label{sec:Intro}
Let $X(\cdot)$ be a log-correlated Gaussian field\footnote{All Gaussian fields/random variables in this article are centred unless otherwise specified.} on some bounded domain $D \subset \RR^d$ with covariance
\begin{align}\label{eq:cov}
\EE[X(x) X(y)] = -\log|x-y| + f(x, y), \qquad \forall x, y \in D.
\end{align}

\noindent The associated Gaussian multiplicative chaos (GMC) is a random measure formally written as the exponentiation of the underlying field, i.e.
\begin{align*}
M_{\gamma}(dx) = e^{\gamma X(x) - \frac{\gamma^2}{2} \EE[X(x)^2]}dx
\end{align*}

\noindent where $\gamma \in \RR$ is an intermittency parameter. First introduced by Kahane \cite{Kah1985} in an attempt to provide a mathematical framework for Kolmogorov-Obukhov-Mandelbrot's model of turbulence, the theory of GMCs has attracted a lot of attention in the probability and mathematical physics community in the last decade due to its central role in random planar geometry \cite{DMS2014, DS2011} and Liouville conformal field theory \cite{DKRV2016}, and new applications in e.g. random matrix theory \cite{Web2015, LOS2016, BWW2017, NSW2018, CN2019}. Various equivalent constructions of $M_{\gamma}$ have been studied, including the mollification approach which proceeds by considering the weak$^*$ limit of measures
\begin{align*}
M_{\gamma}(dx) = \lim_{\epsilon \to 0} M_{\gamma, \epsilon}(dx) = \lim_{\epsilon \to 0} e^{\gamma X_{\epsilon}(x) - \frac{\gamma^2}{2} \EE[X_{\epsilon}(x)^2]}dx
\end{align*}

\noindent where $X_{\epsilon}(x) = X \ast \theta_{\epsilon}(x)$ for some suitable mollifier $\theta$ (see \Cref{subsec:cGMC}). It is a standard fact that the random measure arising from such limit procedure does not depend on the choice of the mollification, and that it is non-trivial if and only if $\gamma^2 < 2d$, known as the subcritical regime of GMC. We refer the interested readers to \cite{RV2014} for a survey article on subcritical chaos.

In the critical regime where $\gamma = \sqrt{2d}$, we now understand that a non-trivial measure $\mu_{f}$, known as the critical GMC\footnote{The notation $\mu_f$ emphasises the dependence on $f$ appearing in the covariance kernel \eqref{eq:cov} of our underlying field.}, may be constructed via different renormalisation schemes such as the Seneta-Heyde norming
\begin{align*}
\mu_f(dx) 
&= \lim_{\epsilon \to 0^+} \sqrt{\log \frac{1}{\epsilon}} M_{\sqrt{2d}, \epsilon}(dx),
\end{align*}

\noindent at least when $f$ in \eqref{eq:cov} is sufficiently regular (see \Cref{subsec:Gdec}). The theory of critical GMCs is far less developed, and there have been more research efforts in this direction in recent years motivated not only by the study of Liouville quatum gravity at criticality, but also its connection to extremal process of discrete log-correlated Gaussian fields \cite{BL2014,BL2016,BL2018}.

\subsection{Main result: universal tail profile of critical GMCs} \label{subsec:result}
The goal of the present article is to study the tail profile of $\mu_f$, as part of the programme of understanding finer distributional properties of GMCs. This was initially motivated by a question from discrete Gaussian free field (see \Cref{subsec:DGFF}), as well as the following power law result obtained by the author for the subcritical regime.
\begin{theo}\label{theo:subcritical}
Let $\gamma \in (0, \sqrt{2d})$ and $M_{\gamma}$ be the subcritical GMC associated to the log-correlated field $X(\cdot)$ in \eqref{eq:cov}. Suppose the function $f$ appearing in the covariance kernel can be decomposed as
\begin{align*}
f(x, y) = f_+(x, y) - f_-(x, y)
\end{align*}

\noindent where $f_{\pm}(x, y)$ are covariance kernels of some continuous Gaussian fields on $\overline{D}$. Then for any open set $A \subset D$ and continuous function $g \ge 0$ on $\overline{A}$, we have
\begin{align*}
\PP\left(\int_A g(x) M_{\gamma}(dx) > t\right) \overset{t \to \infty}{\sim} \left(\int_A e^{\frac{2d}{\gamma}(Q-\gamma)f(v,v)}g(v)^{\frac{2d}{\gamma^2}}dv\right) \frac{\frac{2}{\gamma}(Q-\gamma)}{\frac{2}{\gamma}(Q-\gamma)+1} \frac{\overline{C}_{\gamma, d}}{t^{\frac{2d}{\gamma^2}}}
\end{align*}

\noindent where $Q = \frac{\gamma}{2}+\frac{d}{\gamma}$. The constant $\overline{C}_{\gamma, d} \in (0, \infty)$, depending on $\gamma$ and $d$ but not on $A$, $f$ or $g$, has a probabilistic representation in all dimensions and explicit formulae when $d \le 2$:
\begin{align*}
\overline{C}_{\gamma, d}
= \begin{dcases}
\frac{(2\pi)^{\frac{2}{\gamma}(Q-\gamma)}}{\frac{\gamma}{2}(Q - \gamma) \Gamma\left(\frac{\gamma}{2}(Q - \gamma)\right)^{\frac{2}{\gamma^2}}}, & d = 1, \\
- \frac{\left(\pi \Gamma(\frac{\gamma^2}{4})/\Gamma(1-\frac{\gamma^2}{4})\right)^{\frac{2}{\gamma}(Q - \gamma)}}{\frac{2}{\gamma}(Q- \gamma)} \frac{\Gamma(-\frac{\gamma}{2}(Q-\gamma))}{\Gamma(\frac{\gamma}{2}(Q-\gamma))\Gamma(\frac{2}{\gamma}(Q-\gamma))}, & d= 2.
\end{dcases}
\end{align*}
\end{theo}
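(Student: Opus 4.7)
The plan is to exploit the fact that as $t \to \infty$ the tail event $\{Y > t\}$ for $Y := \int_A g(x)\, M_\gamma(dx)$ is driven by a single anomalously high point of the field $X$ producing a localised large contribution to the measure, and to extract the universal constant $\overline{C}_{\gamma, d}$ by reducing the local picture around such a point to a canonical log-correlated model on $\RR^d$. Using the hypothesis $f = f_+ - f_-$ with $f_\pm$ covariances of continuous Gaussian fields on $\overline D$, I would first apply Kahane's convexity inequality to compare $X$ with a reference field of the form $\widetilde X + Z$, where $\widetilde X$ is exactly scale invariant around every $v \in D$ (for instance a Bacry--Muzy-type cone construction on $\RR^d$) and $Z$ is an independent continuous Gaussian field. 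The smooth field $Z$ modifies the measure only through an almost-constant lognormal factor, and after taking the eventual moment of order $2d/\gamma^2$ it contributes precisely the exponential prefactor $e^{\frac{2d}{\gamma}(Q - \gamma) f(v, v)}$ in the final asymptotic.

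The next step is localisation. Partitioning $A$ into small balls $\{B(v_i, r)\}$, I would show that on $\{Y > t\}$ with $t$ large, the mass is effectively carried by a single ball. The upper bound rests on a union bound combined with two-point fusion estimates for $\EE[M_\gamma(B(v, r))\, M_\gamma(B(v', r))]$, uniform as $v' \to v$, to rule out two simultaneously atypical regions; the lower bound comes from the classical rooted-measure / Cameron--Martin shift by $-\gamma \log|x - v|$ of Kahane and Rhodes--Vargas, under which the shifted measure concentrates near $v$. On a single ball, exact scale invariance of $\widetilde X$ then yields a distributional identity
\begin{align*}
\int_{B(v, r)} g(x)\, M_\gamma(dx) \;\approx\; g(v)\, r^{d}\, e^{\gamma \Omega_r - \tfrac{\gamma^2}{2}\log(1/r)}\, \Ma
\end{align*}
as $r \to 0$, where $\Omega_r \sim \Na(0, \log(1/r))$ is independent of a reference total mass $\Ma$ on the unit ball whose law depends only on $\gamma$ and $d$.

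The tail of $Y$ is now the tail of this product. I would obtain it by studying the divergence of the Mellin transform $q \mapsto \EE[\Ma^q]$: this is finite for $q < q^* := 2d/\gamma^2$ and exhibits a simple pole as $q \to q^*$ coming from fusion of the integration variables, with a residue expressible as an integral over a canonical reference configuration on $\RR^d$; a Tauberian argument then converts this residue into the universal constant $\overline{C}_{\gamma, d}$. Summing the single-ball asymptotics over $i$ and letting $r \to 0$ recovers the geometric factor $\int_A e^{\frac{2d}{\gamma}(Q-\gamma) f(v,v)}\, g(v)^{2d/\gamma^2}\, dv$, while the combinatorial prefactor $\frac{\frac{2}{\gamma}(Q-\gamma)}{\frac{2}{\gamma}(Q-\gamma) + 1}$ should emerge from a Williams-type path decomposition of the radial Brownian motion encoding $\widetilde X$ viewed from the exceptional point, whose overshoot above the running maximum is exponentially distributed with rate $\frac{2}{\gamma}(Q - \gamma)$. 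Explicit values of $\overline{C}_{\gamma, d}$ in $d = 1, 2$ then follow from plugging the Fyodorov--Bouchaud and DOZZ/Remy moment formulas for the total mass of GMC on the circle and on the unit disk into the reference expression.

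The principal obstacle is the localisation step in the $L^2$-supercritical regime $\gamma^2 \in [d, 2d)$, where $\EE[Y^2] = \infty$ and the scenario of two coalescing high points cannot be excluded by a simple variance bound; genuine two-point fusion estimates controlling the GMC correlator as two points merge are required, of a kind reminiscent of the multi-point correlators in Liouville theory. A secondary technical difficulty is obtaining the single-ball tail asymptotics uniformly in $r$, which is necessary to legitimately interchange the $r \to 0$ and $t \to \infty$ limits in the final assembly step.
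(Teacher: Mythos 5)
This theorem is not proved in the present paper: it is imported from the author's earlier work \cite{Won2019} and only cited here. The comparison below is therefore against the strategy that paper is described as following, and against the obstructions the present paper identifies in \Cref{rem:faketau}.

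Your localisation heuristic (a single $\gamma$-thick point drives the tail) is the correct starting point, and several of your ingredients do appear in the cited argument: Kahane comparison to an exactly scale-invariant reference field, the Cameron--Martin/rooting shift that tilts the field towards a thick point, and a cross-moment (fusion) estimate in the spirit of \Cref{lem:cross} to control simultaneous high regions. The local decomposition $\int_{B(v,r)} g\, dM_\gamma \approx g(v)\, r^{d}\, e^{\gamma\Omega_r - \tfrac{\gamma^2}{2}\log(1/r)}\,\mathcal{M}$ with $\Omega_r \perp \mathcal{M}$ is also the right picture, and your accounting of where the $e^{\frac{2d}{\gamma}(Q-\gamma)f(v,v)}$ and $g(v)^{2d/\gamma^2}$ factors come from is essentially correct.

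The genuine gap is in how you convert the local picture into an actual tail asymptotic. You propose to read off the leading coefficient from the residue of a simple pole of $q\mapsto \EE[\mathcal{M}^q]$ at $q^* = 2d/\gamma^2$. But the implication from ``$\EE[\mathcal{M}^q]$ has a simple pole with residue $Cq^*$ at $q^*$'' to ``$\PP(\mathcal{M} > t) \sim C t^{-q^*}$'' is a converse Tauberian statement that does not hold without extra regularity — this is exactly the pitfall spelled out in \Cref{rem:faketau}, where the counterexample $\PP(U>t) = (1 + 0.0001\sin(\log t))/t$ shows the transform asymptotics alone do not even force slow variation of $t\,\PP(U>t)$. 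The divergence of $\EE[M_\gamma(A)^q]$ as $q\uparrow q^*$ has been known since \cite{Kah1985} and \cite{RV2014}; if Mellin pole analysis sufficed, the tail result would have been a short corollary. The tool that actually closes this gap in \cite{Won2019} is Goldie's implicit renewal theorem applied to the stochastic fixed point equation furnished by exact scale invariance of the reference GMC, which directly yields the $\sim C t^{-q^*}$ law together with an identifiable constant; general fields are then reached by a localisation identity of the shape of \eqref{eq:local_fail} (rewriting the Laplace-transform estimate as a $v$-integral over rooted expectations), dominated convergence in $\epsilon$, and Gaussian interpolation, rather than by a union bound over balls. The dominated-convergence step is also what legitimises the interchange of $r\to 0$ and $t\to\infty$ limits that you correctly flag as delicate. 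Finally, the prefactor $\frac{(2/\gamma)(Q-\gamma)}{(2/\gamma)(Q-\gamma)+1}$ emerges from this localisation integral, not from a Williams path decomposition of the radial process — that part of your sketch is a heuristic guess rather than an argument. Your attribution of the explicit $d=1,2$ constants to the Fyodorov--Bouchaud and DOZZ-type moment formulas for the reference total mass is, however, accurate.
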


Given the subcritical result, one may make various conjectures regarding the tail probability of $\int_A g(x) \mu_f(dx)$. For instance one may expect that a power law with exponent $1$ will hold in the critical setting, which is consistent with the criterion of existence of moments of critical GMCs that has been known since the work of Duplantier--Rhodes--Sheffield--Vargas \cite{DRSV2014a, DRSV2014b}:
\begin{align*}
\EE \left[ \mu_f(A)^q\right] < \infty \qquad \forall q < 1
\end{align*}

\noindent for any non-empty open set $A \subset D$. An even more interesting conjecture concerns the leading order coefficient: when $\gamma = \sqrt{2d}$ one can verify that
\begin{align*}
\int_A e^{\frac{2d}{\gamma}(Q-\gamma)f(v,v)}g(v)^{\frac{2d}{\gamma^2}}dv = \int_A g(v)dv,
\end{align*}

\noindent which suggests the possibility of a new universality phenomenon that the first-order asymptotics
\begin{align*}
\PP\left(\int_A g(x) \mu_{f}(dx) > t \right) \overset{t \to \infty}{\sim} \frac{\overline{C}_d \int_A g(v)dv}{t}
\end{align*}

\noindent is completely independent of the function $f$ that governs the covariance of our underlying field $X(\cdot)$. Our main result confirms this behaviour.

\begin{theo}\label{theo:main}
Let $\mu_f$ be the critical GMC associated to the log-correlated field $X(\cdot)$ in \eqref{eq:cov}. Suppose the function $f$ appearing in the covariance kernel can be decomposed as
\begin{align}\label{eq:f_dec}
f(x, y) = f_+(x, y) - f_-(x, y)
\end{align}

\noindent where $f_\pm$ are covariance kernels of some continuous Gaussian fields on $\overline{D}$. Then for any Jordan measurable open\footnote{Can be replaced by Borel sets since any the boundary of a Jordan measurable set, which has Lebesgue measure $0$, does not affect the mass with respect to the critical GMC.}sets $A \subset D$ and continuous functions $g \ge 0$ on $\overline{A}$, 
\begin{align}\label{eq:main_result}
\PP\left(\int_A g(x)\mu_{f}(dx) > t\right) \overset{t \to \infty}{=} \frac{\int_A g(v) dv}{\sqrt{\pi d} \cdot t} + o(t^{-1}).
\end{align}
\end{theo}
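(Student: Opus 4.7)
The plan mirrors the structure of \Cref{theo:subcritical} --- localise the tail event to a single peak of the underlying field, perform a Cameron--Martin shift rooted at this peak, and identify an explicit universal tail constant --- but with critical-specific modifications at the localisation and fusion steps.

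First, using \eqref{eq:f_dec} I would realise $X$, on an enlarged probability space, as a sum $X^{\star}+Z$ in which $X^{\star}$ is a reference log-correlated field with an exactly tractable structure on small scales (for instance $\ast$-scale invariant, giving access to a Bessel/Brownian radial representation of the local field around a point) and $Z$ is an independent continuous Gaussian field whose covariance absorbs the contribution of $f_{\pm}$ up to smooth errors; its effect will be carried along as a bounded multiplicative exponential density.

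Next comes the localisation step: I would cover $A$ by finitely many pieces $A_1,\dots,A_N$ of small diameter $r$ and aim to show
\begin{equation*}
\PP\!\Bigl(\int_A g\,d\mu_f > t\Bigr) = \sum_{j=1}^N \PP\!\Bigl(\int_{A_j} g\,d\mu_f > t\Bigr) + o(t^{-1}), \qquad t\to\infty,
\end{equation*}
uniformly in $r$, so that after sending $r\downarrow 0$ the contribution reduces to a single point of concentration. In the subcritical case this is a straightforward $L^q$-moment union bound with $q>2d/\gamma^2$, but at criticality $\EE[\mu_f(A)^q]=\infty$ for every $q\ge 1$ while the tail exponent itself equals $1$, so that argument is unavailable. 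The remedy is to rule out ``two simultaneous peaks'' via mixed fractional-moment estimates of the type $\EE\bigl[\mu_f(A_i)^{\alpha}\mu_f(A_j)^{\beta}\bigr]$ with $\alpha,\beta<1$ and $\alpha+\beta>1$, with appropriate decay in the separation $|A_i-A_j|$. These are precisely the critical fusion-type estimates alluded to in the abstract, and I expect establishing them to be the main obstacle of the proof.

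Given the localisation, for each cell $A_j$ and each root $v\in A_j$ a Cameron--Martin shift by $x\mapsto\sqrt{2d}\,\EE[X(v)X(x)]$ adds the deterministic profile $x\mapsto -\sqrt{2d}\log|x-v|+\sqrt{2d}f(v,x)$ to $X$ and extracts a Gaussian height variable. Integrating the resulting Seneta--Heyde shifted measure over $v$, and using the critical identity
\begin{equation*}
\int_{A_j} e^{\frac{2d}{\gamma}(Q-\gamma)f(v,v)}g(v)^{\frac{2d}{\gamma^2}}\,dv\,\Big|_{\gamma=\sqrt{2d}} = \int_{A_j} g(v)\,dv,
\end{equation*}
collapses all dependence on $f$ (and on $Z$) into the single factor $\int_{A_j} g$, producing the universality phenomenon. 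What remains is the tail of a rooted reference critical GMC, which by the radial Bessel/Brownian representation of $X^{\star}$ combined with the Seneta--Heyde $\sqrt{\log(1/\epsilon)}$ normalisation produces the constant $1/\sqrt{\pi d}$ through a first-passage density computation at height $\sqrt{2d}\log(1/\epsilon)$. Summing over $j$ and sending $r\downarrow 0$ gives \eqref{eq:main_result}.
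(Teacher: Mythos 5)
Your localisation idea --- controlling cross moments $\EE[\mu_f(A_i)^{\alpha}\mu_f(A_j)^{\beta}]$ with $\alpha+\beta>1$ --- is correct, and it is precisely what the paper uses, with $\alpha=\beta=h\in\bigl(1/2,\,1/2+1/(2\sqrt{d})\bigr)$ (\Cref{lem:cross}), to run a splitting lemma at the level of $\EE[1-e^{-\lambda\mu_{f,g}(\cdot)^2}]$. Likewise, the Bessel/Brownian radial computation you sketch at the end matches the paper's evaluation of the constant. The genuine gap is the step in the middle: the Cameron--Martin/rooting argument you propose for proving universality (the $f$-independence of the leading coefficient) does not go through, and this is exactly the obstruction the paper's introduction highlights.

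Concretely, the ``critical identity'' you invoke is vacuous: at $\gamma=\sqrt{2d}$ one has $Q-\gamma=0$ and $2d/\gamma^2=1$, so the formula reduces to $\int_{A_j}g=\int_{A_j}g$; it does not by itself kill the $f$-dependence of the tail. After a Cameron--Martin shift rooted at $v$, the rooted mass $\mu_{f,g,\epsilon}(v,A_j)=\int_{A_j}e^{2df(x,v)}g(x)\mu_{f,\epsilon}(dx)/(|x-v|\vee\epsilon)^{2d}$ still depends on $f$ both through the explicit density and through the underlying chaos, and one must evaluate the $\epsilon\to0^+$ limit of $(\log 1/\epsilon)^{1/2}\EE[\,\cdot\,]$ of some function of this mass \emph{for general $f$}. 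The functional that is genuinely Tauberian-equivalent to the tail involves something like $\mu^{-1}(1-e^{-\lambda\mu^2})$, as in \eqref{eq:local_fail}, which is neither convex nor concave in the chaos mass, so Kahane comparison cannot be used to transfer from a reference field to a general one at the rooted level. The soft quantity that \emph{is} tractable after rooting, $\EE[\mu(A)e^{-\lambda\mu(A)}]\sim -C\log\lambda$, is \emph{not} Tauberian-equivalent to $\PP(\mu(A)>t)\sim C/t$: the backward implication in \eqref{eq:falsetau} fails, as \Cref{rem:faketau} shows with a $(1+0.0001\sin\log t)/t$ counterexample. The paper's universality argument (\Cref{lem:universal}) therefore avoids rooting altogether: it Gaussian-interpolates $\EE[1-e^{-\lambda\mu(A_j)^2}]$ directly between the field $X$ and an $L$-exact field $Y_{f(v_j,v_j)}$ on each small cell $A_j$, controlling the interpolation derivative via \Cref{cor:interpolate} by the small oscillation of $f$ on $A_j$ together with uniform two-sided $1/t$ tail bounds on the interpolating chaos (\Cref{lem:tailbound}). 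Only once universality is established in this way does the rooted Bessel computation enter --- and it enters through the non-Tauberian functional $\EE[\mu_L(A)e^{-\lambda\mu_L(A)}]$, which now \emph{is} sufficient, precisely because the power-law form $C/t$ has already been secured --- to pin down $\overline{C}_d=(\pi d)^{-1/2}$.
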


\begin{rem}
We note that the result in the critical case may be obtained from a heuristic computation based on the subcritical result when we have a closed-form expression for $\overline{C}_{\gamma, d}$. Indeed, using the conjecture\footnote{The claim is proved for the $2$-dimensional Gaussian free field with Dirichlet boundary condition in \cite{APS2018}, and subsequently extended to other log-correlated fields in $d=2$ with the decomposition condition \eqref{eq:f_dec} in \cite{JSW2018}.} that the derivative of subcritical GMCs at $\gamma = \sqrt{2d}^-$ is equivalent to the corresponding critical GMC defined via Seneta-Heyde norming up to a multiplicative factor:
\begin{align}\label{eq:sub_der}
\frac{M_{\gamma}(dx)}{\sqrt{2d} - \gamma} \overset{\gamma \to \sqrt{2d}^-}{\to} \sqrt{2 \pi}\mu_f(dx),
\end{align}

\noindent we would have expected that
\begin{align*}
\PP\left(\int_A \mu_f(dx) > t \right)
& \overset{\gamma \to \sqrt{2d}^-}{\approx} \PP\left(\int_A g(x) M_{\gamma}(dx) > \sqrt{2\pi} (\sqrt{2d}- \gamma) t \right)\\
& \overset{t \to \infty}{\sim} \left(\int_A e^{\frac{2d}{\gamma}(Q-\gamma)f(v,v)}g(v)^{\frac{2d}{\gamma^2}}dv\right) \frac{\frac{2}{\gamma}(Q-\gamma)}{\frac{2}{\gamma}(Q-\gamma)+1} \frac{\overline{C}_{\gamma, d}}{[\sqrt{2\pi}(\sqrt{2d}-\gamma)t]^{\frac{2d}{\gamma^2}}}\\
& \overset{\gamma \to \sqrt{2d}^-}{\to}\bigg(\lim_{\gamma \to \sqrt{2d}^-} \overline{C}_{\gamma, d} \bigg) \frac{\int_A g(v) dv}{\sqrt{\pi d}t}
\end{align*}

\noindent and one could verify using the formulae in \Cref{theo:subcritical} that $\lim_{\gamma \to \sqrt{2d}^-} \overline{C}_{\gamma, d} = 1$ when $d \le 2$. The calculation above is not entirely rigorous, as we have interchanged the (conjectured) limit \eqref{eq:sub_der} and the asymptotics as $t \to \infty$, but it suggests that the approach to critical GMCs from the perspective of derivatives of subcritical GMCs could be a promising one.

\end{rem}

\paragraph{On Jordan measurability.} For technical reasons, the statement of \Cref{theo:main} assumes that $A \subset D$ is Jordan measurable, or equivalently that the boundary $\partial A$ of the set $A$ has zero Lebesgue measure. We discuss the issues in \Cref{app:jordan} and also explain a direct approach in low dimension that may have the possibility of allowing one to circumvent the issues.

\paragraph{On the relevance of kernel decomposition.}
The condition \eqref{eq:f_dec}, which was also present in the subcritical result,  is a technical but very weak assumption that is satisfied by all the important examples like different variants of Gaussian free field in $d=2$ and (regular) $*$-scale invariant fields in any dimension, and can be checked by a tractable criterion regarding the regularity of the function $f$, see \Cref{subsec:Gdec}. 

Similar to the subcritical case, we conjecture that the result holds without this extra condition. Indeed if we assume that $\PP(\int_A g(x) \mu_f(dx) > t)$ satisfies an asymptotic power law profile as $t \to \infty$, then our proof in \Cref{subsec:eval} immediately implies that the power law exponent must equal $1$ and the tail probability has to be of the form \eqref{eq:main_result}. At the moment, however, even the construction of critical GMCs for general $f$ remains to be an open problem.

\subsection{Previous work and our approach}
Despite being of fundamental importance, the tail probability of critical GMC has not been investigated in the literature except\footnote{\cite{BL2014} claims to have obtained the tail asymptotics when $X$ is the $2$-dimensional GFF in their Corollary 2.10 but their argument is based on a result that does not hold, see our \Cref{rem:faketau} in the next subsection.} in \cite{BKNSW2015} where the authors there studied the $L$-exact fields $\EE[Y_L(x) Y_L(y)] = -\log |x-y| + L$ on $[0,1]^d$ for $d \le 2$ and showed that the associated critical GMCs $\mu_{L}(d\cdot)$ satisfy
\begin{align*}
\PP\left(\mu_L([0,1]^d) > t \right) \overset{t \to \infty}{\sim} \frac{C(d)}{t}.
\end{align*}

\noindent Their derivation was based on the exact scale invariance of the underlying field and a stochastic fixed point equation, leading to a probabilistic representation for the non-explicit coefficient $C(d)$. It was unclear how their techniques, based on the application of Goldie's implicit renewal theorem, could be directly adapted to deal with general fields \eqref{eq:cov}, density functions $g$ or domain $D$.

Our strategy here is inspired by our previous work on subcritical GMCs \cite{Won2019}, using the fact that $\mu_f$ is localised (since it is not absolutely continuous with respect to the Lebesgue measure), and the heuristic that when $\mu_{f, g}(A):= \int_A g(x)\mu_{f}(dx)$ is large, then most of the mass comes from the neighbourhood of some $\sqrt{2d}$-thick point of the underlying field $X$. In order to derive the asymptotics for the tail probability, we make use of a Tauberian argument (different from the one in \cite{Won2019}) to reformulate our problem in terms of estimates for the Laplace transform
\begin{align*}
\EE\left[1 - e^{-\lambda \mu_{f, g}(A)^2}\right] \qquad \text{as } \lambda \to 0^+,
\end{align*}

\noindent and apply Gaussian comparison at some point in our proof, but the similarity of the analysis in the critical case to that in the subcritical case ends here. Indeed, if we pursued the approach in \cite{Won2019}, we would have to commence with the localisation trick
\begin{align}\label{eq:local_fail}
\EE\left[1 - e^{-\lambda \mu_{f, g}(A)^2}\right]
= \lim_{\epsilon \to 0^+} \int_A \left(\log \frac{1}{\epsilon}\right)^{\frac{1}{2}} \EE\left[\frac{1}{\mu_{f, g, \epsilon}(v, A)}\left(1 - e^{-\lambda \mu_{f, g, \epsilon}(v, A)^2}\right)\right] dv
\end{align}

\noindent where 
\begin{align*}
\mu_{f, g, \epsilon}(v, A) := \int_A \frac{e^{2d f(x, v)} g(x)\mu_{f, \epsilon}(dx)}{\left(|x-v| \vee \epsilon\right)^{2d}},
\qquad \mu_{f, \epsilon}(dx) = \left(\log \frac{1}{\epsilon}\right)^{\frac{1}{2}} e^{\sqrt{2d}X_{\epsilon}(x) - d \EE[X_\epsilon(x)^2]}dx.
\end{align*}

\noindent More precisely, we would need a very explicit expression for the $\epsilon$-limit \eqref{eq:local_fail} before we could carry out further asymptotic analysis in $\lambda$ like those in \cite[Section 3.2]{Won2019}. The evaluation of the limit \eqref{eq:local_fail}, however, is already very involved for our reference log-correlated field (\Cref{subsec:eval}) that has a very special decomposition, and it is not even clear how it can be done in the general case. The use of Goldie's implicit renewal theorem in \cite[Section 3.1]{Won2019} is also completely irrelevant here because the critical GMC associated with a singular density is simply the wrong object to study. In short, the dominated convergence-based argumentation in the subcritical case is doomed to fail here.

To circumvent these issues we pursue a different approach based on a new splitting lemma, which says if $A_{\pm}$ are two disjoint sets, then
\begin{align*}
\PP(\mu_{f, g}(A_+ \cup A_-) > t)
\overset{t \to \infty}{\sim} \PP(\mu_{f, g}(A_+) > t) + \PP(\mu_{f, g}(A_-) > t)
\end{align*}

\noindent or in terms of Laplace transform,
\begin{align*}
\EE\left[1-e^{-\lambda \mu_{f, g}(A_+ \cup A_-)^2}\right] \overset{\lambda \to \infty}{\sim}
\EE\left[1-e^{-\lambda \mu_{f, g}(A_+)^2}\right] + \EE\left[1 - e^{-\lambda \mu_{f, g}(A_-)^2}\right].
\end{align*}

\noindent This is consistent with the heuristics explained earlier as well as the formula for the leading order asymptotics \eqref{eq:main_result}. Our proof then proceeds as follows:
\begin{itemize}
\item[(i)] We extend the result in \cite{BKNSW2015} for exactly scale-invariant fields, showing that
\begin{align*}
\PP \left(\int_A g(x) \mu_{L}(dx) > t \right) \overset{t \to \infty}{\sim} \frac{\overline{C}_d \int_A g(x)dx}{t}
\end{align*}

for continuous functions $g$ in general dimension $d$.

\item[(ii)] We split our set $A$ into sufficiently small pieces $A_i$ such that the functions $f$ (in \eqref{eq:cov}) and $g$ do not fluctuate a lot on each of them. Gaussian interpolation is then performed directly on each of $\EE\left[1 - e^{-\lambda \mu_{f, g}(A_i)^2}\right]$, resulting in
\begin{align*}
\PP \left(\mu_{f, g}(A) > t \right) \overset{t \to \infty}{\sim} \frac{\overline{C}_d \int_A g(x)dx}{t}.
\end{align*}

\item[(iii)] To evaluate the constant $\overline{C}_d$, we study the asymptotics of
\begin{align}\label{eq:localintro}
\EE\left[\mu_{f, g}(A) e^{-\lambda \mu_{f, g}(A)}\right]
= \lim_{\epsilon \to 0^+} \int_A \left(\log \frac{1}{\epsilon}\right)^{\frac{1}{2}} \EE\left[e^{-\lambda \mu_{f, g, \epsilon}(v, A)}\right] g(v)dv
\end{align}

as $\lambda \to 0^+$ for some convenient reference log-correlated field $X(\cdot)$. This will rely on techniques from recent studies of fusion asymptotics \cite{BW2018}.
\end{itemize}

\subsection{Extremal process of discrete log-correlated fields}\label{subsec:DGFF}
One of our motivations for the present work came from the discrete probability, where there have been a lot of interests in the last decades in understanding the geometry of discrete log-correlated fields. The most studied example is the discrete Gaussian free field (DGFF), which is a centred Gaussian process $\{h_V(x): x \in V\}$ indexed by the finite subset $V$ of vertices of an infinite graph, with covariance $\EE[h_V(x) h_V(y)]$ proportional to the discrete Green's function, i.e. the expected number of visits to y of a simple random walk on the graph starting from $x$ before exiting $V$. 

Let $D \subset \RR^2$ be a bounded domain and $V = V_N := D \cap \frac{1}{N} \ZZ^2$. By choosing the correct proportionality constant, the DGFF $ h_N(\cdot) := h_{V_N}(\cdot) $ may be defined so that
\begin{align*}
\EE\left[h_N(x) h_N(y)\right] = -\log \left(|x-y| \vee \frac{1}{N}\right) + \mathcal{O}(1).
\end{align*}

\noindent Under this normalisation, it is well-known since the work of Bramson and Zeitouni \cite{BZ2011} that $m_N = 2 \log N - \frac{3}{4} \log \log N$ captures the rate of growth of $\max_{x \in V_N} h_N(x)$ as $N \to \infty$. This led to a sequence of work \cite{BL2014, BL2016, BL2018} by Biskup and Louidor who studied the extremal process of the DGFF, i.e. the scaling limit of
\begin{align*}
\eta_N(dx, dh) = \sum_{x \in V_N} \delta_{v}(dx) \otimes \delta_{h_N(v) - m_N}(dh) 1_{\{h_N(v) = \max_{|y-v| < r_N} h_N(y) \}}
\end{align*}

\noindent as $N \to \infty$. Under suitable condition on the domain $D$ and $N^{-1} \ll r_N \ll 1$, they showed that
\begin{align*}
\eta_N(dx, dh) \xrightarrow[N \to \infty]{d} \mathrm{PPP}\left(\mu(dx) \otimes e^{-2h}dh\right)
\end{align*}

\noindent where $\mu$ is a random measure characterised by a collection of five axioms (\cite[Theorem 2.8]{BL2014}). It was long conjectured that $\mu$ should be the critical Liouville quantum gravity measure $\mu_{\mathrm{LQG}}$ up to a deterministic factor, i.e. the critical GMC measure $g(x) \mu_f(dx)$ associated to
\begin{itemize}
\item the Gaussian free field on $D$, i.e. $\EE[X(x) X(y)] = G_D(x, y)$ where $G_D(x, y)$ is the Green's function with Dirichlet boundary condition, and
\item $g(x) = R(x; D)^2$ is the square of the conformal radius.
\end{itemize}

The main difficulty in resolving the conjecture lay in the verification of the fifth axiom, i.e. whether
\begin{align}\label{eq:5thaxiom}
\forall A \subset D, \qquad \lim_{\lambda \to 0^+} \frac{ \EE \left[\mu(A) e^{-\lambda \mu(A)}\right]}{-\log \lambda} = c \int_A R(x; D)^2 dx
\end{align}

\noindent was satisfied by the choice of $\mu = \mu_{\mathrm{LQG}}$, and one should not be surprised that the above claim would have followed from the tail asymptotics of $\mu_{\mathrm{LQG}}(A)$.

At the time the project started, the verification of \eqref{eq:5thaxiom} for $\mu = \mu_{\mathrm{LQG}}$ had been left as an open problem for four years, but it was finally resolved when there was a revision of the preprint \cite{BL2014} where the authors there evaluated the limit \eqref{eq:5thaxiom} based on a localisation trick similar to the one we employ for the evaluation of our coefficient $\overline{C}_d$. In contrast to the analysis in \cite[Section 8]{BL2014}, our analysis in \Cref{subsec:eval} is based on a fully continuum approach inspired by recent analysis of fusion limits in Liouville conformal field theory \cite{BW2018} with the advantage of being able to determine the limit
\begin{align*}
\lim_{\epsilon \to 0^+} \left(\log \frac{1}{\epsilon}\right)^{\frac{1}{2}} \EE\left[e^{-\lambda \mu_{f, \epsilon}(v, A)}\right] 
\end{align*}

\noindent for each $\lambda > 0$, at least for our choice of reference field $X$, which could be of independent interest. In some special cases (e.g. when $X$ is the exact field in $d \le 2$) we can use the same technique to study the localisation trick
\begin{align*}
\EE\left[ e^{-\lambda / \mu_{f, g}(A)}\right] = \int_A \lim_{\epsilon \to 0^+} \left(\log \frac{1}{\epsilon}\right)^{1/2} \EE\left[\frac{1}{\mu_{f, g, \epsilon}(v, A)} e^{-\lambda / \mu_{f, g, \epsilon}(v, A)}\right] dv
\end{align*}

\noindent and the ability to evaluate the limit of the integrand on the RHS for each $\lambda > 0$ means that not only can we obtain the first order asymptotics of $\PP\left(\mu_{f, g}(A) > t \right)$, there is also a possibility to obtain some bound on the lower order terms via Tauberian remainder theorem.

\begin{rem}\label{rem:faketau}
Based on the asymptotics \eqref{eq:5thaxiom} for $\mu = \mu_{\mathrm{LQG}}$, \cite[Corollary 2.10]{BL2014} claimed that 
\begin{align*}
\PP\left(\mu_{\mathrm{LQG}}(A) > t \right) \overset{t \to \infty}{\sim} \frac{c \int_A R(x; D)^2 dx}{t}
\end{align*}

\noindent could be readily deduced from the use of Tauberian theorems. We note that this claim is false as it would have relied on a proposition that
\begin{align} \label{eq:falsetau}
\PP(U > t) \overset{t \to \infty}{\sim} \frac{C}{t} \qquad \Leftrightarrow \qquad \EE \left[U e^{-\lambda U}\right] \overset{\lambda \to 0^+}{\sim} -C \log \lambda
\end{align}

\noindent for non-negative random variables $U$. While the forward implication is always true, unfortunately the backward direction of \eqref{eq:falsetau} does not always hold, e.g. consider the counterexample $\PP(U > t) = (1+0.0001 \sin (\log t)) / t$ for $t \ge 1$. Indeed the same counterexample shows that the asymptotics for $\EE[Ue^{-\lambda U}]$ is so weak that it does not even imply $\PP(U  > t) \sim L(t) / t$ for some slowly-varying function\footnote{A function $L(\cdot)$ is said to be slowly-varying at infinity if $\lim_{t \to \infty} L(xt) / L(t) = 1$ for any $x > 0$.} $L(\cdot)$ at infinity.

This highlights the fact that it is a more challenging task to establish the tail universality of $\mu_{f, g}(A)$. On one hand, it is easy to show that
\begin{align*}
\lim_{\lambda \to 0^+} \frac{\EE\left[\mu_{f, g}(A) e^{-\lambda \mu_{f, g}(A)}\right]}{-\log \lambda}
\end{align*}

\noindent does not depend on $f$ because the localisation trick allows us to rewrite $\EE\left[\mu_{f, g}(A) e^{-\lambda \mu_{f, g}(A)}\right]$ in terms of $\EE\left[e^{-\lambda \mu_{f, g, \epsilon}(v, A)}\right]$ as in \eqref{eq:localintro}, and the latter expectation involves a convex evaluation of a GMC mass which means Gaussian comparisons can be performed without incurring huge error (in the sense that the difference between the upper and lower bounds are negligible when divided by $-\log \lambda$ in the limit). On the other hand, any Laplace transform estimate that is equivalent to the tail probability asymptotics by Tauberian argument is going to involve evaluation at a GMC mass of some function that is not convex or concave (see e.g. the integrand on the RHS of \eqref{eq:local_fail}), making it impossible to apply Kahane's convexity inequality \eqref{eq:Gcomp} to reduce our problem to special log-correlated fields in the first step.
\end{rem}

\subsection{Outline of the paper}
The remainder of the article is organised as follows.

In \Cref{sec:prelim} we collect a few results that will be used in later parts of the paper. This includes basic facts about Gaussian processes, decomposition of log-correlated fields and construction of Gaussian multiplicative chaos, Tauberian theorems as well as properties of $3$-dimensional Bessel processes.

\Cref{sec:proof} is devoted to the proof of our main theorem and a more elaborate outline of our three-step approach may be found in that section. Various technical results, namely the cross moment estimates of GMCs (\Cref{lem:cross}), the construction of reference GMCs \eqref{eq:def_Sd} as well as the evaluation of localisation limit (\Cref{lem:fusion}), are proved in \Cref{app:cross,app:ref,app:fusion}. We also discuss the technical assumption of Jordan measurability in \Cref{app:jordan}.

\paragraph{Acknowledgement} 
The present work is supported by ERC Advanced Grant 740900 (LogCorRM).

\section{Preliminaries}\label{sec:prelim}
\subsection{Gaussian processes}\label{subsec:GP}
We begin with the following lemma, which is a less precise version of concentration of suprema of Gaussian processes (see e.g. \cite[Section 2.1]{Won2019}).
\begin{lem} \label{lem:ctsGP}
Let $G(\cdot)$ be a continuous Gaussian field on some compact domain $K \subset \RR^d$, then there exists some $c > 0$ such that
\begin{align}\label{eq:fast_tail}
\PP\left( \sup_{x \in K} |G(x)| > t \right) \le \frac{1}{c} e^{-c t^2}, \qquad \forall t \ge 0.
\end{align}
\end{lem}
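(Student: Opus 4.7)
The plan is to deduce the Gaussian tail bound from the classical Borell--TIS (Borell--Tsirelson--Ibragimov--Sudakov) concentration inequality for suprema of Gaussian fields. All of the ingredients we need are consequences of continuity together with compactness of $K$.

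First, I would check the two finite parameters that enter the Borell--TIS bound. Since $G$ has continuous sample paths on the compact set $K$, the map $x \mapsto \EE[G(x)^2]$ is continuous on $K$ (by continuity of the covariance on the diagonal), hence the variance proxy
\[
\sigma^2 := \sup_{x \in K} \EE[G(x)^2]
\]
is finite. Continuity of sample paths on $K$ also implies that $\sup_{x \in K} |G(x)|$ is an almost surely finite random variable, and a standard application of Fernique's theorem (or a direct use of the fact that Gaussian suprema with finite median have Gaussian tails) yields that $M := \EE\bigl[\sup_{x \in K} |G(x)|\bigr] < \infty$.

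Next, I would invoke the Borell--TIS inequality applied to the process $G$ (and separately to $-G$), which produces
\[
\PP\!\left(\sup_{x \in K} G(x) - \EE\!\left[\sup_{x \in K} G(x)\right] > u\right) \le e^{-u^2/(2\sigma^2)}, \qquad u \ge 0,
\]
with an identical bound for $-G$. Adding the two and writing $u = t - M$ gives
\[
\PP\!\left(\sup_{x \in K}|G(x)| > t\right) \le 2 e^{-(t-M)^2/(2\sigma^2)}
\]
for every $t \ge M$. For $t \ge 2M$ one has $(t-M)^2 \ge t^2/4$, so the right-hand side is bounded by $2 e^{-t^2/(8\sigma^2)}$. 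Choosing $c>0$ small enough that $c \le 1/(8\sigma^2)$, that $1/c \ge 2$, and that $\frac{1}{c}e^{-c(2M)^2} \ge 1$ (all achievable simultaneously since the bound is vacuous for small $t$), the inequality $\PP(\sup_{x \in K}|G(x)| > t) \le \frac{1}{c}e^{-ct^2}$ holds for every $t \ge 0$.

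No step here is a real obstacle: Borell--TIS is textbook, and continuity on compact $K$ supplies everything needed to apply it. The only mildly delicate point is handling small values of $t$, which is dealt with by a cosmetic adjustment of the constant $c$ so that the right-hand side exceeds $1$ on the bounded range $t \in [0, 2M]$.
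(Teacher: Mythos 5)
Your argument via Borell--TIS is correct and is essentially the same route the paper points to: the lemma is stated without proof, with a citation to ``concentration of suprema of Gaussian processes'' in the prior work \cite{Won2019}, which is precisely the Borell--TIS/Fernique machinery you invoke. The handling of the small-$t$ range by shrinking $c$ is a cosmetic but necessary step, and you carry it out correctly.
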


The following enhanced continuity criterion for Gaussian processes is due to \cite{ASVY2014}.
\begin{lem}\label{lem:contGP}
Let $(G(x))_{x \in [0,T]}$ be a centred Gaussian process. Then it is $\alpha$-H\"older continuous for any $\alpha < H$, i.e.
\begin{align}\label{eq:Holder}
|G(x) - G(y)| \le C_\epsilon |x-y|^{H-\epsilon}, \qquad \forall \epsilon > 0
\end{align}

\noindent if and only if there exists $c_\epsilon > 0$ such that
\begin{align*}
\EE \left[ (G(x) - G(y))^2 \right]^{1/2} \le c_\epsilon |x-y|^{H-\epsilon}, \qquad \forall \epsilon > 0.
\end{align*}

\noindent Moreover, the random variable $C_\epsilon$ in \eqref{eq:Holder} satisfies $\EE\left[\exp(a C_\epsilon^k) \right] < \infty$ for any $a \in \RR$ if $k < 2$, or $a > 0$ small enough if $k = 2$.

\end{lem}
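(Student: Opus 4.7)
The plan is to establish the two directions of the equivalence in turn and then deduce the claimed exponential integrability of $C_\epsilon$ from Gaussian concentration.

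The forward direction is immediate: if the H\"older estimate \eqref{eq:Holder} holds with $\EE[C_\epsilon^2]<\infty$, then $\EE[(G(x)-G(y))^2]^{1/2}\le \EE[C_\epsilon^2]^{1/2}|x-y|^{H-\epsilon}$, giving the desired variance bound (with a slight relabelling of $\epsilon$). For the reverse direction my strategy is a dyadic chaining argument. Fix $\epsilon>0$, set $\epsilon'=\epsilon/2$, and for each $n\ge 1$ consider the dyadic grid $\{kT2^{-n}:0\le k\le 2^n\}$. Each consecutive increment is a centred Gaussian with standard deviation at most $c_{\epsilon'}(T2^{-n})^{H-\epsilon'}$, so a union bound yields
\begin{align*}
\PP\!\left(\max_{0\le k<2^n}|G(kT2^{-n})-G((k+1)T2^{-n})|>t_n\right)\le 2^{n+1}\exp\!\left(-\frac{t_n^2}{2c_{\epsilon'}^2(T2^{-n})^{2(H-\epsilon')}}\right).
\end{align*}
Choosing $t_n=n(T2^{-n})^{H-\epsilon}$ makes the right-hand side summable in $n$, so by Borel--Cantelli the dyadic increments decay at rate $t_n$ almost surely. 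A standard telescoping across dyadic scales then yields \eqref{eq:Holder} on the dense set of dyadic points, and continuity of $G$ extends it to all of $[0,T]$.

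To obtain the moment bound on the resulting $C_\epsilon$, I would apply the Borell--TIS inequality to the centred Gaussian field $\tilde G(x,y)=(G(x)-G(y))|x-y|^{-(H-\epsilon)}$ indexed by $\{(x,y)\in[0,T]^2:x\neq y\}$, extended by $\tilde G(x,x)=0$. Applying the hypothesis with exponent $H-\epsilon/2$ gives $\mathrm{Var}\,\tilde G(x,y)\le c_{\epsilon/2}^2|x-y|^{\epsilon}$, which is bounded on $[0,T]^2$; this also controls the canonical pseudo-metric, and Dudley's entropy bound then yields $\EE[C_\epsilon]<\infty$. Borell--TIS now produces $\PP(C_\epsilon>\EE[C_\epsilon]+t)\le \exp(-t^2/(2\sigma_\epsilon^2))$ for some $\sigma_\epsilon^2<\infty$, which directly gives $\EE[\exp(aC_\epsilon^k)]<\infty$ for all $a\in\RR$ when $k<2$, and for sufficiently small $a>0$ when $k=2$.

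The main technical obstacle is treating $C_\epsilon$ as a bona fide supremum of a Gaussian process: the field $\tilde G(x,y)$ is a priori only defined off the diagonal, so one must either justify the continuous extension $\tilde G(x,x)=0$ (which follows from the H\"older bound just established) or approximate via dyadic suprema and pass to the monotone limit before invoking Borell--TIS. Either route is routine, but the whole concentration step rests on it and so should be spelled out carefully.
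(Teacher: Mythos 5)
The paper does not prove this lemma; it simply cites the result to Azmoodeh--Sottinen--Viitasaari--Yazigi, so the comparison below is against your proposal on its own terms. Your overall strategy (dyadic chaining for the reverse implication, Gaussian concentration for the moment bound) is the right one, but the step invoking Dudley's entropy bound has a genuine gap, and the forward direction is mildly circular as written.

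The Dudley step does not go through as stated: bounding $\mathrm{Var}\,\tilde G(x,y)\le c_{\epsilon/2}^2|x-y|^{\epsilon}$ shows the variance is \emph{bounded} on $[0,T]^2$, but it says nothing about the canonical pseudo-metric $d\bigl((x,y),(x',y')\bigr)=\EE\bigl[(\tilde G(x,y)-\tilde G(x',y'))^2\bigr]^{1/2}$, and near the diagonal of $[0,T]^2$ the increments of $\tilde G$ are not controlled by the increments of $G$ alone (the denominators $|x-y|^{-(H-\epsilon)}$ blow up). So "this also controls the canonical pseudo-metric" is unjustified, and you cannot conclude convergence of Dudley's entropy integral from what has been shown. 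Fortunately the step is also unnecessary: Borell--TIS applies to \emph{any} centred Gaussian process with almost surely finite supremum, and your chaining argument has already produced exactly that, namely $C_\epsilon=\sup_{x\neq y}\tilde G(x,y)<\infty$ a.s.\ (after the routine separability/measurability check you flag at the end). Borell--TIS then yields $\EE[C_\epsilon]<\infty$ and $\PP(|C_\epsilon-\EE C_\epsilon|>u)\le 2e^{-u^2/(2\sigma_\epsilon^2)}$ directly, from which the claimed exponential moments follow; Dudley should simply be dropped. Separately, your forward direction assumes $\EE[C_\epsilon^2]<\infty$, but that is part of the "moreover" conclusion being proved, so as written it is circular. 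Either establish the concentration for $C_\epsilon$ first from the a.s.\ H\"older hypothesis (again via Borell--TIS on $\tilde G$), or avoid moments entirely with a quantile argument: picking $M_\epsilon$ with $\PP(C_\epsilon\le M_\epsilon)\ge 3/4$ gives $\PP(|G(x)-G(y)|\le M_\epsilon|x-y|^{H-\epsilon})\ge 3/4$, and since $G(x)-G(y)$ is Gaussian this forces $\EE[(G(x)-G(y))^2]^{1/2}\le M_\epsilon|x-y|^{H-\epsilon}/\Phi^{-1}(7/8)$.
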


\subsection{Decomposition of Gaussian fields}\label{subsec:Gdec}
Let $f(x, y)$ be a symmetric function on $D \times D$ for some domain $D \subset \RR^d$. We say $f$ is in  the local Sobolev space $H_{\mathrm{loc}}^s(D \times D)$ of index $s > 0$ if $\kappa f$ is in $H^s(D \times D)$ for any $\kappa \in C_c^\infty(D \times D)$, i.e.
\begin{align*}
\int_{\RR^d} (1+|\xi|^2)^s|\widehat{(\kappa f)}(\xi)|^2 d\xi < \infty
\end{align*}

\noindent where $\widehat{(\kappa f)}$ is the Fourier transform of $\kappa f$. We now mention a convenient criterion\footnote{We implicitly assume that $f$ is continuous on $\overline{D} \times \overline{D}$, which is necessary if it is the difference of covariance kernels of two continuous Gaussian fields.} for checking whether our log-correlated Gaussian field \eqref{eq:cov} satisfies the decomposition condition \eqref{eq:f_dec}. 
\begin{lem}[{cf. \cite[Lemma 3.2]{JSW2018}}]\label{lem:holder_dec}
If $f \in H_{\mathrm{loc}}^s(D \times D)$ for some $s > d$, then there exist two centred, H\"older-continuous Gaussian processes $G_\pm$ on $\RR^d$ such that
\begin{align}
\EE[G_+(x) G_+(y)] - \EE[G_-(x) G_-(y)] = f(x, y), \qquad \forall x, y \in D'
\end{align}

\noindent for any bounded open set $D'$ such that $\overline{D'} \subset D.$
\end{lem}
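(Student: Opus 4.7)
The plan is to produce the decomposition $f = f_+ - f_-$ at the level of covariance kernels via a spectral argument, and then read off Hölder continuity of the resulting Gaussian fields from the Sobolev regularity of $f$ using \Cref{lem:contGP}. First I would localise: given a bounded open $D' \Subset D$, pick a symmetric cutoff $\kappa \in C_c^\infty(D \times D)$ with $\kappa \equiv 1$ on a neighbourhood of $\overline{D'} \times \overline{D'}$, and set $F := \kappa f$. By hypothesis $F \in H^s(\RR^{2d})$; moreover $F$ is continuous, symmetric, compactly supported, and coincides with $f$ on $D' \times D'$, so it suffices to decompose $F$ on the whole of $\RR^d \times \RR^d$.

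For the decomposition itself, I would view $F$ as the integral kernel of the operator $T_F g(x) = \int F(x,y) g(y)\,dy$ on $L^2(\RR^d)$. Since $F \in L^2$ and $F$ is symmetric, $T_F$ is self-adjoint and Hilbert-Schmidt, so the spectral theorem yields real eigenvalues $(\lambda_n) \to 0$ and an orthonormal system $(\phi_n)$ of eigenfunctions with
\[ F(x,y) = \sum_n \lambda_n \phi_n(x) \phi_n(y) \]
converging at least in $L^2(\RR^{2d})$. Splitting the spectrum into its positive and negative parts,
\[ K_+(x,y) := \sum_{\lambda_n > 0} \lambda_n \phi_n(x) \phi_n(y), \qquad K_-(x,y) := -\sum_{\lambda_n < 0} \lambda_n \phi_n(x) \phi_n(y), \]
produces two symmetric, positive semi-definite kernels with $F = K_+ - K_-$, hence covariances of centred Gaussian fields $G_\pm$ on $\RR^d$. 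Restricting to $D' \times D'$ then gives $f(x,y) = \EE[G_+(x)G_+(y)] - \EE[G_-(x)G_-(y)]$.

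To upgrade $G_\pm$ from merely $L^2$-type constructions to Hölder continuous fields, I would appeal to \Cref{lem:contGP}: it is enough to bound $\EE[(G_\pm(x) - G_\pm(y))^2]^{1/2} \le c_\epsilon |x-y|^{H-\epsilon}$, which in turn requires Hölder control of $K_\pm$ on the diagonal. This is where $s > d$ is used: bootstrapping the eigenvalue identity $\lambda_n \phi_n(x) = \int F(x,y)\phi_n(y)\,dy$ together with the $H^s$-regularity of $F$ transfers regularity onto the eigenfunctions $\phi_n$, and the weighted sums entering $K_\pm$ become summable in a Hölder norm once the Sobolev exponent strictly exceeds $d$.

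The main obstacle is precisely the final regularity step. The spectral decomposition a priori only respects $L^2$ structure, and Hölder regularity of the \emph{difference} $F = K_+ - K_-$ does not automatically descend to its two signed halves. The exponent $s > d$ is exactly the threshold that allows the contributions from positive and negative eigenvalues to be estimated separately (via Sobolev embedding applied on $\RR^{2d}$ and a careful accounting of the spectral series, as carried out in \cite{JSW2018}), thereby producing the Hölder bounds on $K_+$ and $K_-$ individually that feed back into \Cref{lem:contGP}.
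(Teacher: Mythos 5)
The paper itself does not prove this lemma; it is imported wholesale from \cite[Lemma 3.2]{JSW2018} via a ``cf.'' citation, so the only thing to compare against is the argument in that reference.

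Your proposal has a genuine gap, and it is precisely at the point you flag yourself. The spectral decomposition $F = K_+ - K_-$ gives positive semi-definite kernels $K_\pm \in L^2$, but nothing in the spectral theorem guarantees that $K_\pm$ are even \emph{continuous}, let alone H\"older, on the diagonal. The issue is not a ``careful accounting'' that can be deferred: cancellation between the positive and negative spectral parts is what makes $F$ regular, and killing one sign of eigenvalues can destroy convergence of the series $\sum_n |\lambda_n| \phi_n(x)\phi_n(y)$ pointwise and a fortiori in any H\"older norm. Concretely, if $\lambda_n = (-1)^n n^{-\alpha}$ with $\alpha$ just above the threshold where $\sum \lambda_n \phi_n(x)\phi_n(y)$ converges by alternating cancellation, then $\sum_{n\,\mathrm{even}} n^{-\alpha}\phi_n(x)^2$ need not converge at all. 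Controlling $K_\pm$ individually would require sharp two-sided control on both the eigenvalue decay of $T_F$ (a Weyl-law type statement quantified in terms of $s$) and on $\sup_x |\phi_n(x)|$ uniformly in $n$, and then a summability argument in H\"older norm; none of this is supplied by ``bootstrapping the eigenvalue identity.'' Moreover, you explicitly outsource this step back to \cite{JSW2018} (``as carried out in \cite{JSW2018}''), which undercuts the proposal as a self-contained proof.

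For what it is worth, \cite{JSW2018} does not take the spectral route. Their argument is more elementary and avoids the spectral theorem entirely: roughly, one adds and subtracts a fixed, explicit translation-invariant positive-definite kernel $C\,\rho(x-y)$ (e.g.\ a Bessel-type kernel with $H^s$-adapted regularity) with $C$ large enough that $f(x,y) + C\rho(x-y)$ becomes positive semi-definite on the localised domain; positive definiteness of the shifted kernel is obtained from the mapping property $T_{\kappa f}: H^{-t} \to H^t$ that follows from $\kappa f \in H^s$, and H\"older continuity of both summands is immediate from Sobolev embedding of $H^s(\RR^{2d})$ for $s>d$. This ``add-and-subtract'' construction sidesteps exactly the obstruction your approach runs into, since both kernels in the decomposition are controlled explicitly rather than via an $L^2$ spectral split. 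If you want to salvage the spectral route you would need a quantitative Schatten-class argument with pointwise eigenfunction bounds, which is a substantially heavier investment than the direct construction.
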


An interesting implication of \Cref{lem:holder_dec} (using a further decomposition result of \cite{JSW2018}) is that the logarithmic kernel, when restricted to sufficiently small Euclidean ball, is positive definite. This may be seen as a trivial special case of \cite[Theorem B]{JSW2018} and has been known since \cite{RV2010} by a different, spherical averaging argument.
\begin{lem}\label{lem:exact}
For each $L \in \RR^d$, there exists $r_d(L) > 0$ such that the kernel
\begin{align} \label{eq:L_exact}
K_L(x, y) = -\log|x-y| + L
\end{align}

\noindent is positive semi-definite on $B(0, r_d(L))$. In particular, there exists a Gaussian field $Y_L$ on $B(0, r_d(L))$ with covariance kernel given by $K_L$.
\end{lem}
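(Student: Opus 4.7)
The goal is to exhibit $K_L(x,y) = -\log|x-y| + L$ as the covariance of a log-correlated Gaussian field on a sufficiently small Euclidean ball, which is equivalent to positive semi-definiteness of the kernel there. My plan is to compare $K_L$ with a reference PSD log-correlated kernel and absorb the smooth discrepancy using \Cref{lem:holder_dec}.

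First, I would construct a positive definite log-correlated reference kernel via the $\ast$-scale invariant/Bacry--Muzy prescription: starting from a smooth, radial, compactly supported, positive definite seed $k$ with $k(0) = 1$, the kernel
\[
K^{\ast}(x,y) \;:=\; \int_{0}^{\infty} k(e^{s}(x-y))\,ds
\]
is positive definite as a superposition of PSD terms, and a change of variables shows $K^{\ast}(x,y) = -\log|x-y| + \gamma(x,y)$ on a neighborhood of the diagonal with $\gamma$ smooth; the value $\gamma(0,0)$ can be tuned to any constant (in particular to $L$) by rescaling $k$ and its scale range. I would then apply \Cref{lem:holder_dec} to the smooth residual $h(x,y) := \gamma(x,y) - L$ (which vanishes at the origin and belongs to $H^{s}_{\mathrm{loc}}$ for every $s$) to obtain continuous Gaussian fields $G_{\pm}$ on $\RR^d$ satisfying $h = \EE[G_{+}(x)G_{+}(y)] - \EE[G_{-}(x)G_{-}(y)]$. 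This yields the formal identity
\[
K_L(x,y) \;=\; \bigl(K^{\ast}(x,y) + \EE[G_{-}(x)G_{-}(y)]\bigr) \;-\; \EE[G_{+}(x)G_{+}(y)],
\]
in which the parenthesized term is PSD, being the covariance of the independent sum $X^{\ast} + G_{-}$.

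The main obstacle is that one cannot in general subtract one PSD kernel from another and preserve positive definiteness, so the identity above alone does not yet establish the PSD property of $K_L$. This is precisely the role played by the ``further decomposition result'' of \cite{JSW2018} alluded to in the text: on a sufficiently small ball $B(0,r_d(L))$, the H\"older fluctuations of $h$ are small compared to the log-singularity already present in $K^{\ast}$, which allows $G_{+}$ to be realised as an orthogonal Gaussian component inside $X^{\ast} + G_{-}$, making the residual covariance exactly $K_L$. In view of the paper's remark that the statement is a ``trivial special case of Theorem B of \cite{JSW2018}'', one simply invokes that theorem with the constant function $f \equiv L$. An alternative self-contained route is the spherical averaging construction of \cite{RV2010}: realise $Y_L$ directly as a Gaussian functional of a white noise on $\RR^d \times (0,\infty)$, with the upper scale cutoff calibrated so that the regular part of the resulting covariance equals $L$ exactly on the target ball, thereby bypassing the subtraction issue entirely.
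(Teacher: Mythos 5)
The paper gives no self-contained proof of this lemma: it is stated as a known fact, with the reader pointed to Theorem B of \cite{JSW2018} (``trivial special case'' with $f \equiv L$) or alternatively the spherical-averaging argument of \cite{RV2010}. Your proposal ultimately lands on exactly these two references, so at the level of substance you are following the paper's own route.

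That said, the first two-thirds of your write-up is a dead end that you yourself flag as such. Constructing the Bacry--Muzy kernel $K^{\ast}$, applying \Cref{lem:holder_dec} to the residual $h = \gamma - L$, and arranging the identity $K_L = (K^{\ast} + \EE[G_-G_-]) - \EE[G_+G_+]$ produces a representation of $K_L$ as a difference of two PSD kernels, which -- as you correctly observe -- carries no information about the positive definiteness of $K_L$ itself. Once you invoke \cite[Theorem B]{JSW2018} with $f \equiv L$ at the end, none of this scaffolding is used; Theorem B applies directly to the constant function $L$ (which is trivially in $H^{s}_{\mathrm{loc}}$ for every $s$) and gives the field $Y_L$ on a small ball without any reference to $K^{\ast}$ or to the $G_{\pm}$. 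Your gloss on \emph{why} Theorem B closes the gap -- ``the H\"older fluctuations of $h$ are small compared to the log-singularity already present in $K^{\ast}$, which allows $G_{+}$ to be realised as an orthogonal Gaussian component inside $X^{\ast} + G_{-}$'' -- is not how that theorem operates, and reads as a rationalisation rather than a genuine bridge; Theorem B is an existence/decomposition statement for log-correlated fields with regular covariance perturbation, not a device for subtracting one Gaussian component out of another. Your alternative route via \cite{RV2010} is also the paper's alternative, though your description blends the white-noise/cone construction with the spherical averaging argument; both appear in that reference, so the citation is apt even if the terminology is a bit conflated. In short: correct conclusion, same key citations as the paper, but the preparatory decomposition is superfluous and the explanatory sentence about Theorem B's mechanism should be dropped or replaced with a statement of what the theorem actually asserts.
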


Following \cite{Won2019}, we shall call the function $K_L$ in \eqref{eq:L_exact} the $L$-exact kernel, and when $L = 0$ we simply call $K_0$ the exact kernel and write $r_d = r_d(0)$. The associated Gaussian field $Y_L$ will be called the $L$-exact field (or the exact field when $L = 0$), and without loss of generality $r_d(L)$ is chosen such that it is a nondecreasing function in $L$. As we shall see later, the $L$-exact fields will play a pivotal role in local approximations in \Cref{subsec:universal}, as well as in the construction of our reference log-correlated field in dimension $d \ge 2$ on which we depend for the identification of the proportionality constant $\overline{C}_d$ in \Cref{subsec:eval} and \Cref{app:fusion}.

\subsection{Critical Gaussian multiplicative chaos}\label{subsec:cGMC}
There are two equivalent constructions of critical Gaussian multiplicative chaos, namely the derivative martingale approach and Seneta-Heyde renormalisation, which were first studied in \cite{DRSV2014a} and \cite{DRSV2014b} respectively for the special class of $*$-scale invariant kernels. Combined with the mollification method, they have been recently extended to treat more general log-correlated fields \cite{JS2017,Pow2017,JSW2018}. 

Without loss of generality we shall focus on the Seneta-Heyde renormalisation, which defines the critical GMC via the limit
\begin{align*}
\mu_{f}(dx) := \lim_{\epsilon \to 0^+} \left( \log \frac{1}{\epsilon}\right)^{\frac{1}{2}} e^{\sqrt{2d} X_{\epsilon}(x) - d \EE[X_\epsilon(x)^2]} dx
\end{align*}

\noindent where $X_{\epsilon}(x) = X \ast \theta_\epsilon(x)$ for suitable mollifiers $\theta_\epsilon(x) := \epsilon^{-d} \theta(x / \epsilon)$. For simplicity and definiteness we shall consider $\theta \in C_c^\infty(\RR^d)$ such that $\theta \ge 0$ and $\int \theta = 1$, but more general condition is available (see e.g. \cite{Pow2017}).
\begin{lem}\label{lem:criticalGMC}
Let $X$ be the log-correlated Gaussian field in \eqref{eq:cov} satisfying the decomposition condition \eqref{eq:f_dec}. Then the sequence of measures
\begin{align*}
\mu_{f, \epsilon}(dx) := \left( \log \frac{1}{\epsilon}\right)^{\frac{1}{2}} e^{\sqrt{2d} X_{\epsilon}(x) - d \EE[X_\epsilon(x)^2]} dx
\end{align*}

\noindent converges in probability as $\epsilon \to 0^+$ to some locally finite random Borel measure $\mu_f(dx)$ on $D$ in the weak$^*$ topology. Moreover, the limit $\mu_f(dx)$, formally written as $\mu_f(dx) = e^{\sqrt{2d}X(x) - d \EE[X(x)^2]}dx$, is independent of the choice of mollifiers $\theta$.
\end{lem}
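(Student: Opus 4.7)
The plan is to bootstrap from a reference exactly log-correlated field, for which critical GMC convergence is already known via \cite{DRSV2014b}, and then transfer to the general case via a Girsanov-type shift by continuous Gaussian perturbations.

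First, I would fix a relatively compact subdomain $D' \Subset D$ and, using Lemma 2.5, choose $L$ large enough that the $L$-exact field $Y_L$ is well-defined on a ball containing $\overline{D'}$. For such reference fields, the weak$^*$ convergence in probability of the Seneta-Heyde measure $\mu_{L,\epsilon}$ to a nontrivial critical GMC $\mu_L$, together with independence from the choice of mollifier, has been established in \cite{DRSV2014b} via martingale arguments that exploit the exact scale-invariance structure. This is the base case from which everything else is bootstrapped.

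Second, using the decomposition hypothesis \eqref{eq:f_dec} together with Lemma 2.3 and Lemma 2.5 (possibly absorbing a large additive constant into $f_+$ and suitably enlarging the probability space), one may realize $X$ on $D'$, in distribution, as a sum of independent Gaussian pieces of the form $X = Y_L + H_+ - H_-$, where $H_\pm$ are centred, almost surely H\"older continuous Gaussian fields on $\overline{D'}$, independent of each other and of $Y_L$. Linearity of mollification and independence then yield
\[
\mu_{f,\epsilon}(dx) = F_\epsilon(x)\, \mu_{L,\epsilon}(dx),
\]
where $F_\epsilon(x)$ is an explicit positive random function built from $H_{\pm,\epsilon}$ and deterministic variance corrections. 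By Lemma 2.2, $H_{\pm,\epsilon}$ converge almost surely uniformly on $\overline{D'}$ to H\"older continuous limits, and all the variance corrections admit continuous $\epsilon \to 0^+$ limits, so $F_\epsilon \to F$ almost surely uniformly on $\overline{D'}$. Combined with the in-probability weak$^*$ convergence of $\mu_{L,\epsilon}$ to $\mu_L$, this yields a weak$^*$ limit $\mu_f$ on $D'$ in probability. Exhausting $D$ by a countable increasing family of such subdomains produces a locally finite random Borel measure on $D$.

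Third, mollifier independence follows because for two admissible mollifiers $\theta^{(1)}, \theta^{(2)}$ the Gaussian difference $X_\epsilon^{(1)} - X_\epsilon^{(2)}$ has covariance converging uniformly to zero and can be absorbed into the perturbation component of the above decomposition; its contribution to $F_\epsilon$ tends uniformly to $1$ as $\epsilon \to 0^+$. The main technical obstacle lies in the second step, namely producing an actual joint coupling $(Y_L, H_+, H_-)$ realising the prescribed covariance of $X$, rather than merely a covariance-level identity. This is precisely the role of the decomposition hypothesis \eqref{eq:f_dec} together with Lemma 2.3, and the required joint constructions together with the detailed weak$^*$ convergence analysis have been carried out in \cite{JS2017, Pow2017, JSW2018} under essentially our setting; the present proof therefore ultimately reduces to a verification that our standing hypotheses meet the conditions used there.
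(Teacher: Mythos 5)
The paper does not actually supply a proof of this lemma; it simply records the statement and attributes it to the constructions in \cite{JS2017,Pow2017,JSW2018} (see the paragraph preceding the lemma). Your sketch is in the right spirit, but the key structural step is wrong as stated.

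The claim in your second step --- that $X$ can be realised in distribution as $X = Y_L + H_+ - H_-$ with $Y_L$, $H_+$, $H_-$ mutually independent --- is impossible in general. For centred jointly Gaussian processes, independence forces covariances to \emph{add}, and the sign in front of $H_-$ is irrelevant since $\mathrm{Cov}(-H_-) = \mathrm{Cov}(H_-)$; so the covariance of $Y_L + H_+ - H_-$ would be $-\log|x-y| + L + f_+(x,y) + f_-(x,y)$, not $-\log|x-y| + f_+(x,y) - f_-(x,y)$. No choice of coupling can fix this because the obstruction is already at the level of the marginal covariances. Consequently the factorisation $\mu_{f,\epsilon}(dx) = F_\epsilon(x)\,\mu_{L,\epsilon}(dx)$ you derive from it does not exist, and the third step (mollifier independence) inherits the same problem.

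The identity that actually holds --- and that this paper itself uses in \Cref{app:cross} and in the proof of \Cref{lem:tailbound} --- is the ``two-sided'' one: with $G_\pm$ continuous Gaussian fields of covariance $f_\pm$ independent of everything else, one has
\begin{align*}
\bigl(X(x) + G_-(x)\bigr)_{x} \overset{d}{=} \bigl(Y_0(x) + G_+(x)\bigr)_{x}
\end{align*}
on suitable balls. This does not factor $\mu_{f,\epsilon}$ directly; rather it factors the Seneta--Heyde measure of $X + G_-$ as a uniformly convergent continuous prefactor coming from $G_{-,\epsilon}$ times $\mu_{f,\epsilon}$, and the distributional identity transfers the convergence of the analogous measure built from $Y_0 + G_+$ (which reduces to the known exact-kernel case). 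One then divides out the $G_{-,\epsilon}$-prefactor, which is uniformly convergent and a.s.\ bounded away from $0$, to obtain convergence of $\mu_{f,\epsilon}$. Note that transferring convergence in probability across a distributional identity needs a short justification: the Cauchy-in-probability property of a sequence depends only on its joint law, which is what the identity provides. This is precisely the shape of argument carried out in \cite{JS2017,Pow2017,JSW2018}, to which you (and the paper) ultimately defer; but as written your intermediate steps would not go through.
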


Let us collect Kahane's interpolation formula, which is a very useful tool in the theory of multiplicative chaos. We first state the result for continuous Gaussian fields.
\begin{lem}[\cite{Kah1985}] \label{lem:Kahane}
Let $\rho$ be a Radon measure on $D$, $X(\cdot)$ and $Y(\cdot)$ be two continuous centred Gaussian fields, and $F: \RR_+ \to \RR$ be some smooth function with at most polynomial growth at infinity. For $t \in [0,1]$, define $Z_t(x) = \sqrt{t}X(x) + \sqrt{1-t}Y(x)$ and
\begin{align}
\varphi(t) := \EE \left[ F(W_t)\right], \qquad
W_t := \int_D e^{Z_t(x) - \frac{1}{2}\EE[Z_t(x)^2]} \rho(dx).
\end{align}

\noindent Then the derivative of $\varphi$ is given by
\begin{equation}\label{eq:Kahane_int}
\begin{split}
\varphi'(t) & = \frac{1}{2} \int_D \int_D \left(\EE[X(x) X(y)] - \EE[Y(x) Y(y)]\right) \\
& \qquad \qquad \times \EE \left[e^{Z_t(x) + Z_t(y) - \frac{1}{2}\EE[Z_t(x)^2] - \frac{1}{2}\EE[Z_t(y)^2]} F''(W_t) \right] \rho(dx) \rho(dy).
\end{split}
\end{equation}

\noindent In particular, if
\begin{align*}
\EE[X(x) X(y)] \le \EE[Y(x) Y(y)] \qquad \forall x, y \in D,
\end{align*}

\noindent then for any convex $F: \RR_+ \to \RR$ with at most polynomial growth at infinity,
\begin{align}\label{eq:Gcomp}
\EE \left[F\left(\int_D e^{X(x) - \frac{1}{2} \EE[X(x)^2]}\rho(dx)\right)\right]
\le
\EE \left[F\left(\int_D e^{Y(x) - \frac{1}{2} \EE[Y(x)^2]}\rho(dx)\right)\right].
\end{align}

\noindent The inequality is reversed if $F$ is concave instead.
\end{lem}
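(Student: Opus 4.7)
The approach is the classical Gaussian interpolation argument: differentiate $\varphi(t)$ in $t$, apply Gaussian integration by parts (IBP) separately to $X(x)$ and to $Y(x)$, and verify that the ``diagonal'' IBP corrections exactly cancel the drift coming from $-\tfrac12\partial_t\EE[Z_t(x)^2]$. First I would realise $X$ and $Y$ on a product space so that they are independent; then $Z_t(x) = \sqrt{t}\,X(x) + \sqrt{1-t}\,Y(x)$ is a continuous centred Gaussian field with
\begin{align*}
\partial_t\!\left(Z_t(x) - \tfrac{1}{2}\EE[Z_t(x)^2]\right) = \frac{X(x)}{2\sqrt{t}} - \frac{Y(x)}{2\sqrt{1-t}} - \tfrac{1}{2}\bigl(\EE[X(x)^2] - \EE[Y(x)^2]\bigr).
\end{align*}
The polynomial growth of $F$, together with the exponential moment bound from \Cref{lem:ctsGP} applied to $\sup_{D}|X|$ and $\sup_{D}|Y|$, guarantees that $W_t$ has moments of all orders locally uniformly in $t \in (0,1)$, which justifies exchanging $\partial_t$ with the expectation and the $\rho$-integral.

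Next I would apply Gaussian IBP after approximating $X$ and $Y$ by finite-rank fields $X^{(n)}, Y^{(n)}$ (truncated Karhunen-Lo\`eve expansions, available since the covariances are continuous on $\overline{D}\times\overline{D}$). For centred Gaussian vectors the identity $\EE[G\,H(G_1,\dots,G_m)] = \sum_i \EE[GG_i]\,\EE[\partial_i H]$ is classical; applied with $G = X^{(n)}(x)$ and $H$ the integrand $F'(W_t)\,e^{Z_t(x)-\frac12\EE[Z_t(x)^2]}$ it produces two contributions. The $W_t$-dependence gives the ``double-integral'' piece
\begin{align*}
\sqrt{t}\int_D \EE[X(x)X(y)]\,\EE\!\Big[F''(W_t)\,e^{Z_t(x)+Z_t(y) - \frac12\EE[Z_t(x)^2] - \frac12\EE[Z_t(y)^2]}\Big]\rho(dy),
\end{align*}
while the explicit $e^{\sqrt{t}X(x)}$ factor contributes the ``diagonal'' piece $\sqrt{t}\,\EE[X(x)^2]\,\EE[F'(W_t)\,e^{Z_t(x)-\frac12\EE[Z_t(x)^2]}]$. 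The symmetric calculation for $Y^{(n)}(x)$ (with opposite sign inherited from $-Y/(2\sqrt{1-t})$ in $\partial_t Z_t$) produces analogous terms. After dividing by $2\sqrt{t}$ and $2\sqrt{1-t}$ respectively, the two diagonal pieces sum to $+\tfrac{1}{2}(\EE[X(x)^2]-\EE[Y(x)^2])\,\EE[F'(W_t)\,e^{Z_t(x)-\frac12\EE[Z_t(x)^2]}]$, which cancels exactly the drift contribution from $-\tfrac12(\EE[X(x)^2]-\EE[Y(x)^2])$. Passing $n\to\infty$ by dominated convergence (using the exponential-moment bound once more) leaves only the symmetric double integral, which is \eqref{eq:Kahane_int}.

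The comparison \eqref{eq:Gcomp} is then immediate: under the pointwise inequality $\EE[X(x)X(y)]\le\EE[Y(x)Y(y)]$ and convexity $F''\ge 0$, the integrand in \eqref{eq:Kahane_int} is non-positive, so $\varphi'(t)\le 0$ on $(0,1)$ and hence $\varphi(1)\le\varphi(0)$; the concave case is symmetric.

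The main obstacle is making the continuum IBP rigorous and handling the apparent singularities $1/\sqrt{t}$ and $1/\sqrt{1-t}$ near the endpoints. The finite-rank approximation sidesteps the IBP issue cleanly, and the endpoint singularities are illusory because after IBP every $1/\sqrt{t}$ is absorbed into a covariance factor $\sqrt{t}\,\EE[X(x)X(y)]$ (and analogously for $Y$), leaving expressions that extend continuously in $t\in[0,1]$; continuity of $\varphi$ on $[0,1]$ then follows from dominated convergence. A secondary point is to extend the identity from $F\in C^\infty_c(\RR_+)$ to $F$ merely of polynomial growth with a $C^2$ weakness, which follows by mollification combined with the moment bounds on $W_t$, after which \eqref{eq:Gcomp} extends to the full class by monotone approximation.
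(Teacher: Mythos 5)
Your argument is the standard Gaussian interpolation proof of Kahane's lemma, and it is correct: the paper does not prove this statement itself but cites it to Kahane (1985), and your derivation — writing $Z_t$ with independent $X,Y$ on a product space, differentiating the normalised exponential, applying Gaussian integration by parts (via finite-rank truncations), and observing that the ``diagonal'' IBP terms $\tfrac12(\EE[X(x)^2]-\EE[Y(x)^2])\,\EE[F'(W_t)e^{Z_t(x)-\frac12\EE[Z_t(x)^2]}]$ cancel the drift contribution while the $\sqrt{t}$ factors from $\partial_{X(y)} W_t$ absorb the $1/\sqrt{t}$ singularities — is exactly the classical route. The comparison \eqref{eq:Gcomp} then follows by sign of $F''$, and the extension from smooth compactly supported $F$ to convex $F$ of polynomial growth by mollification and monotone approximation is the standard closing step.
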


The comparison principle \eqref{eq:Gcomp} may easily be used in the study of log-correlated fields if we apply it to mollified fields $X_{\epsilon}$ and $Y_{\epsilon}$, a standard argument in the GMC literature which we shall take for granted. For the interpolation principle \eqref{eq:Kahane_int}, we only need the following exponential version of \cite[Corollary 2.7]{Won2019} with $F(x) = 1-e^{-\lambda x^2}$ (see proof of \Cref{lem:universal}), which may be extended to log-correlated fields by taking the limit $\epsilon \to 0^+$.

\begin{cor}\label{cor:interpolate}
Under the same assumptions and notations in \Cref{lem:Kahane}, if there exists some $C>0$ such that
\begin{align*}
\left|\EE[X(x) X(y)] - \EE[Y(x) Y(y)]\right| \le C \qquad \forall x, y \in D,
\end{align*}

\noindent then
\begin{align*}
|\varphi'(t)| \le \frac{C}{2} \EE \left[ (W_t)^2 |F''(W_t)|\right].
\end{align*}

\noindent In particular, if $F$ is also non-negative then
\begin{align*}
\exp \left(-\frac{C}{2} \int_0^1 \frac{\EE \left[ (W_t)^2 |F''(W_t)|\right]}{\EE \left[F(W_t)\right]} dt\right) \le \frac{\varphi(1)}{\varphi(0)} \le \exp \left(\frac{C}{2} \int_0^1 \frac{\EE \left[ (W_t)^2 |F''(W_t)|\right]}{\EE \left[F(W_t)\right]} dt\right).
\end{align*}
\end{cor}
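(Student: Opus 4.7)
The plan is to derive the pointwise bound on $|\varphi'(t)|$ directly from Kahane's interpolation formula \eqref{eq:Kahane_int}, and then obtain the multiplicative control on $\varphi(1)/\varphi(0)$ by a Gronwall-type argument on $\log \varphi$.

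First, I start from the integral representation \eqref{eq:Kahane_int} provided in \Cref{lem:Kahane}. Bringing the absolute value inside the double integral and invoking the uniform bound $|\EE[X(x)X(y)] - \EE[Y(x)Y(y)]| \le C$, I get
\begin{align*}
|\varphi'(t)| \le \frac{C}{2} \int_D \int_D \EE\!\left[e^{Z_t(x)+Z_t(y)-\frac{1}{2}\EE[Z_t(x)^2]-\frac{1}{2}\EE[Z_t(y)^2]} |F''(W_t)| \right] \rho(dx)\rho(dy).
\end{align*}
Since $F$ has at most polynomial growth and the random exponential factors have all moments, Fubini applies and the double integral can be pulled inside the expectation, yielding
\begin{align*}
|\varphi'(t)| \le \frac{C}{2}\, \EE\!\left[|F''(W_t)|\left(\int_D e^{Z_t(x)-\frac{1}{2}\EE[Z_t(x)^2]}\rho(dx)\right)^{\!2}\right] = \frac{C}{2}\, \EE\!\left[W_t^2 |F''(W_t)|\right],
\end{align*}
which is the first claimed inequality.

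For the second assertion, the assumption $F \ge 0$ implies $\varphi(t) = \EE[F(W_t)] \ge 0$, and I only need to treat the case $\varphi(0) > 0$ (otherwise the stated ratio is degenerate); by the first bound $\varphi$ is locally Lipschitz, so $\varphi(t) > 0$ persists on $[0,1]$. Then $\log \varphi$ is differentiable and
\begin{align*}
\left|\frac{d}{dt}\log \varphi(t)\right| = \frac{|\varphi'(t)|}{\varphi(t)} \le \frac{C}{2}\, \frac{\EE[W_t^2 |F''(W_t)|]}{\EE[F(W_t)]}.
\end{align*}
Integrating over $t \in [0,1]$ and exponentiating produces the two-sided bound on $\varphi(1)/\varphi(0)$.

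I do not anticipate a serious obstacle here, as the statement is essentially a packaging of \eqref{eq:Kahane_int} with the trivial estimate on the kernel difference; the only points requiring mild care are the Fubini justification (handled by the polynomial growth hypothesis on $F$ and finiteness of exponential moments of $Z_t$) and verifying that $\varphi$ stays strictly positive on $[0,1]$ so that $\log \varphi$ is well defined in the second step.
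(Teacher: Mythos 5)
Your argument is essentially the standard one and is correct in its main thrust (the paper defers to \cite{Won2019} for this corollary, so there is no in-text proof to compare against; your approach is the natural derivation from \eqref{eq:Kahane_int}). Steps (1)--(2) are fine: bring the absolute value inside the double integral, bound the kernel difference by $C$, observe that the remaining double integral factorizes as the square of the linear integral, and apply Fubini/Tonelli to pull the non-negative integrand outside the expectation, giving $|\varphi'(t)| \le \tfrac{C}{2}\,\EE[W_t^2|F''(W_t)|]$. The passage to the $\log$-derivative bound and exponentiation for the two-sided ratio estimate is also the right mechanism.

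One logical slip worth correcting: you write that since $\varphi$ is locally Lipschitz, $\varphi(0)>0$ implies $\varphi(t)>0$ persists on $[0,1]$. Lipschitz continuity does not prevent a function from hitting zero (e.g.\ $t\mapsto\max(1-t,0)$). The correct way to close this gap is to note that the Gronwall bound itself provides the positivity: on the maximal interval $[0,t^\ast)$ on which $\varphi>0$, the lower bound
\begin{align*}
\log\varphi(t) \ge \log\varphi(0) - \frac{C}{2}\int_0^t \frac{\EE[W_s^2|F''(W_s)|]}{\EE[F(W_s)]}\,ds
\end{align*}
shows that if the integral appearing in the statement is finite then $\varphi(t^\ast)>0$, so $t^\ast=1$; and if that integral is infinite, the two-sided conclusion reads $0 \le \varphi(1)/\varphi(0) \le \infty$ and is vacuously true. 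Either way the claimed inequalities hold, and no separate Lipschitz argument is needed.
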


We now compile a list of results regarding the moments of critical GMCs which has been known since \cite{DRSV2014a, DRSV2014b} (or \cite{AS2013,BKNSW2014} for analogous results for branching random walk and cascades respectively).
\begin{lem}\label{lem:GMC_moment}
Let $q < 1$.

\begin{itemize}
\item[(i)] If $A \subset D$ is a non-empty open set, then $\sup_{\epsilon \in (0, 1)} \EE \left[ \mu_{f, \epsilon}(A)^q\right] < \infty$. In particular,
\begin{align*}
\EE \left[ \mu_{f}(A)^q\right] \le \liminf_{\epsilon \to 0^+} \EE \left[ \mu_{f, \epsilon}(A)^q\right] < \infty.
\end{align*}

\item[(ii)] Let $x \in D$. Then there exists some $C\in (0, \infty)$ possibly depending on $L, q, A$ but not on $r \in (0, 1)$ such that
\begin{align*}
\EE \left[ \mu_{f}(B(x, r))^q\right] 
\le \liminf_{\epsilon \to 0^+} \EE \left[ \mu_{f, \epsilon}(B(x, r))^q\right]
\le \limsup_{\epsilon \to 0^+} \EE \left[ \mu_{f, \epsilon}(B(x, r))^q\right]
\le C r^{2dq - dq^2}
\end{align*}

\noindent for all log-correlated Gaussian fields \eqref{eq:cov} satisfying $||f||_\infty \le L$.
\end{itemize}
\end{lem}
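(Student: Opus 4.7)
The plan is to reduce both statements to known moment estimates for the critical GMC associated with the exact-scale kernel $K_L(x,y) = -\log|x-y| + L$ (Lemma 2.4) via Kahane's comparison inequality \eqref{eq:Gcomp}. To begin, I would fix any ball $B \subset D$ small enough that Lemma 2.4 applies at level $L := \|f\|_\infty + 1$; on $B$ we then have the pointwise sandwich
$$-\log|x-y| - L \le \EE[X(x)X(y)] \le -\log|x-y| + L,$$
where the outer terms are covariance kernels of $\pm L$-exact fields on $B$. For $q \in (0,1)$, since $x \mapsto x^q$ is concave, applying \eqref{eq:Gcomp} at the level of the mollified fields yields, for any Borel $B' \subset B$,
$$\EE\bigl[\mu_{L,\epsilon}(B')^q\bigr] \le \EE\bigl[\mu_{f,\epsilon}(B')^q\bigr] \le \EE\bigl[\mu_{-L,\epsilon}(B')^q\bigr],$$
where $\mu_{\pm L,\epsilon}$ denotes the Seneta-Heyde mollification of the $\pm L$-exact field, and the comparison absorbs the bounded continuous Gaussian correction coming from the decomposition \eqref{eq:f_dec}.

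For the exact fields themselves, the uniform moment bound
$$\sup_{\epsilon \in (0,1)} \EE\bigl[\mu_{\pm L,\epsilon}(B(x,r))^q\bigr] \le C r^{2dq - dq^2}, \qquad q \in (0,1),$$
is classical, going back to \cite{DRSV2014a,DRSV2014b} for $\ast$-scale invariant GMCs and to \cite{AS2013,BKNSW2014} for the branching random walk / multiplicative cascade analogue. The scaling exponent itself can be identified from the approximate scale invariance of the exact kernel: the change of variables $x = ry$ turns $\mu_{L,\epsilon}(B(0,r))$ into $r^{2d} e^{\sqrt{2d} N_r}$ times a rescaled critical GMC on $B(0,1)$, with $N_r \sim \Na(0, \log(1/r))$ an independent Gaussian, and $\EE[e^{\sqrt{2d}q N_r}] = r^{-dq^2}$ combines with the volume factor $r^{2dq}$ to produce the exponent $2dq - dq^2$; the Seneta-Heyde prefactor $(\log 1/\epsilon)^{1/2}$ is absorbed by $(\log(r/\epsilon))^{1/2}$, whose ratio tends to $1$.

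For a general non-empty open set $A$, I would cover a compact subset of $A$ by finitely many balls $B_i$ as above, and obtain (i) from the elementary inequality $\bigl(\sum_i a_i\bigr)^q \le \sum_i a_i^q$ valid for $q \in (0,1)$ and non-negative $a_i$. The case $q = 0$ is trivial, and for $q < 0$ one applies \eqref{eq:Gcomp} in the opposite (convex) direction against the $\pm L$-exact fields, with finiteness of negative critical moments following from a standard Gaussian concentration argument on the minimum of the log-correlated field together with Lemma 2.1. Part (ii) is then the exact-field bound transferred through the comparison, with uniformity in $\|f\|_\infty \le L$ automatic since the comparison constant depends on $L$ monotonically, and the corresponding statements for the limiting measure $\mu_f$ follow from Fatou's lemma.

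The main obstacle I anticipate is verifying that the uniform-in-$\epsilon$ constant in the exact-field moment bound remains finite under the critical Seneta-Heyde normalisation as $\epsilon \to 0^+$, which is precisely the content of the cited analyses via the derivative martingale / modified second moment machinery and is what makes the critical case strictly harder than the subcritical one. Provided this delicate input is taken as a black box, the remainder of the argument is routine Gaussian comparison plus a covering, and the $L$-dependence of constants can be tracked explicitly through the proof.
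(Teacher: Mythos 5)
The paper does not give a proof of this lemma; it is stated as a compilation of results ``known since \cite{DRSV2014a, DRSV2014b}'' with additional citations to \cite{AS2013,BKNSW2014}, so there is no in-paper argument to compare against. Your reduction-via-Kahane strategy is the natural one and matches what the paper's citation implicitly presupposes, but a few steps in your sketch are imprecise enough to be worth flagging.

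First, the sandwich needs the lower kernel $-\log|x-y| - L$ to actually be positive semi-definite, which by \Cref{lem:exact} forces the ball to have radius at most $r_d(-L)$, not merely ``small enough that Lemma 2.4 applies at level $L$''; since $r_d$ is nondecreasing, $r_d(-L)$ is the binding constraint. Second, in the scaling computation the change of variables $x = ry$ contributes a volume factor $r^d$, not $r^{2d}$; the missing $r^{dq}$ in your bookkeeping comes from the deterministic normalisation $e^{-dq\,\EE[N_r^2]} = r^{dq}$, which combines with $r^{dq}$ (volume) and $r^{-dq^2}$ (from $\EE[e^{\sqrt{2d}qN_r}]$) to give $r^{2dq - dq^2}$ as claimed. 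Third, for $q<0$ the direction of Kahane's inequality flips (convex $F$), so the upper bound on $\EE[\mu_{f,\epsilon}^q]$ now comes from the $+L$-exact field rather than the $-L$ one; you acknowledged the flip but did not track which reference field controls which side. Fourth, ``finiteness of negative critical moments from a Gaussian concentration argument on the minimum of the field together with Lemma 2.1'' is too weak: at criticality the Seneta--Heyde normalisation makes the pointwise minimum of $X_\epsilon$ diverge, and the actual proof in \cite{DRSV2014a,DRSV2014b} goes through the derivative-martingale / modified-second-moment machinery, not a simple sup/inf bound of the type \Cref{lem:ctsGP} provides. You do concede that this delicate step is a black box, which is fair, but the specific concentration argument you gesture at would not close the gap.

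Finally, the Fatou step for the limiting measure is clean only for $q\in(0,1)$, where $x\mapsto x^q$ is increasing and $\liminf_\epsilon \mu_{f,\epsilon}(A) \ge \mu_f(A)$ (portmanteau for open sets, a.s.\ along a subsequence) gives the right direction. For $q<0$ one needs genuine convergence of $\mu_{f,\epsilon}(A)$ to $\mu_f(A)$, which relies on $\mu_f(\partial A)=0$ almost surely and is exactly the kind of subtlety the paper relegates to Appendix~D; this is worth noting if you intend your sketch to cover the full range $q<1$.
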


Let us also collect the following estimates regarding the regularised field $X_\epsilon$.
\begin{lem}[{\cite[Lemma 3.5]{Ber2017}}] Let $X(\cdot)$ be a log-correlated Gaussian field with covariance \eqref{eq:cov} where $f$ is continuous on $\overline{D} \times \overline{D}$. For each $\epsilon > 0$, we have
\begin{align*}
\EE[X_\epsilon(x) X_\epsilon(y)] = - \log \left(|x-y| \vee \epsilon \right) + f_\epsilon(x, y) \qquad \forall x, y \in D
\end{align*}

\noindent where $f_\epsilon(x, y) = O(1)$ as $|x-y| \to 0$. Moreover, $f_\epsilon(x, y)$ converges pointwise to $f(x, y)$ for any $x \ne y$, and if $\delta > 0$ then
\begin{align*}
\EE[X_\epsilon(x) X_\epsilon(y)] = - \log |x-y| + f(x, y) +o(1) \qquad \text{as } \epsilon \to 0^+
\end{align*}

\noindent uniformly in $|x-y| \ge \delta$.
\end{lem}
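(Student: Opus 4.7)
The plan is to compute $\EE[X_\epsilon(x)X_\epsilon(y)]$ directly from the covariance formula \eqref{eq:cov}. Using $X_\epsilon = X \ast \theta_\epsilon$ (interpreted in the sense of distributions, which is justified since $\theta_\epsilon \in C_c^\infty$), one gets
\begin{align*}
\EE[X_\epsilon(x) X_\epsilon(y)] = \int\int \bigl(-\log|x-u-y+v|\bigr)\theta_\epsilon(u)\theta_\epsilon(v)\,du\,dv + \int\int f(x-u,y-v)\theta_\epsilon(u)\theta_\epsilon(v)\,du\,dv.
\end{align*}
Call these $I_1(x,y;\epsilon)$ and $I_2(x,y;\epsilon)$. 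My strategy is to prove that $I_1$ captures the singular behaviour $-\log(|x-y|\vee\epsilon)$ up to a bounded remainder, while $I_2$ supplies the approximation of $f(x,y)$.

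For $I_2$, I would use that $f$ is uniformly continuous on any compact subset of $\overline{D}\times\overline{D}$: one has $|I_2(x,y;\epsilon) - f(x,y)| \le \sup\{|f(x-u,y-v)-f(x,y)|: |u|,|v|\le R\epsilon\}$, where $R$ is the radius of $\mathrm{supp}(\theta)$. This tends to $0$ as $\epsilon \to 0^+$, uniformly on compact subsets of $D\times D$ (in particular uniformly on $\{|x-y|\ge \delta\}$ for every fixed $\delta>0$), and $I_2$ is trivially $O(1)$ since $f$ is bounded on compact sets.

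For $I_1$, I would change variables and rewrite it as $I_1(x,y;\epsilon) = -\int \log|x-y+\epsilon w|\,\rho(w)\,dw$, where $\rho = \theta \ast \check\theta \in C_c^\infty(\RR^d)$ (with $\check\theta(w) = \theta(-w)$) is a probability density supported in $B(0,2R)$. Then I split into two regimes:
\begin{itemize}
\item If $|x-y| \ge 4R\epsilon$, then $|x-y+\epsilon w|$ stays comparable to $|x-y|$ for $w\in\mathrm{supp}(\rho)$, and a Taylor expansion of $\log|1+\epsilon w/(x-y)|$ gives $I_1 = -\log|x-y| + O(\epsilon/|x-y|)$. In particular, fixing $\delta>0$ and taking $\epsilon$ small enough, this is $-\log|x-y| + o(1)$ uniformly on $\{|x-y|\ge \delta\}$.
\item If $|x-y| \le 4R\epsilon$, I would set $\alpha = (x-y)/\epsilon$ (so $|\alpha|\le 4R$) and write $I_1 = -\log\epsilon - \int \log|\alpha+w|\rho(w)\,dw$. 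The remaining integral is continuous in $\alpha$ on the compact set $\{|\alpha|\le 4R\}$ (the logarithmic singularity at $w=-\alpha$ is integrable in $d\ge 1$ against the smooth $\rho$), hence bounded. Combined with $\log\epsilon = \log(|x-y|\vee\epsilon)$ in this regime, this yields $I_1 = -\log(|x-y|\vee\epsilon) + O(1)$.
\end{itemize}
Setting $f_\epsilon(x,y) := \EE[X_\epsilon(x)X_\epsilon(y)] + \log(|x-y|\vee\epsilon)$, the two bounds combine to give $f_\epsilon = O(1)$ as $|x-y|\to 0$, pointwise convergence $f_\epsilon \to f$ away from the diagonal, and the claimed uniform convergence on $\{|x-y|\ge \delta\}$.

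The only delicate part is ensuring the uniformity of the $O(1)$ bound on $f_\epsilon$ as $|x-y|$ and $\epsilon$ jointly tend to zero, but this reduces to showing that $\alpha \mapsto \int \log|\alpha+w|\rho(w)\,dw$ is continuous and bounded on compact sets, which is standard since $\rho\in C_c^\infty$ and the logarithm is locally integrable. Apart from this, the argument is essentially a bookkeeping exercise on top of a convolution identity.
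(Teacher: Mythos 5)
Your proof is correct and is essentially the standard argument for this lemma (the same decomposition into the singular logarithmic convolution and the smooth $f$-convolution that appears in the cited reference). One small imprecision: in the regime $|x-y|\le 4R\epsilon$ you assert $\log\epsilon = \log(|x-y|\vee\epsilon)$, which only holds when $|x-y|\le\epsilon$; for $\epsilon < |x-y|\le 4R\epsilon$ the two differ by $\log(|x-y|/\epsilon)\in(0,\log 4R]$, which is $O(1)$ and can simply be absorbed into $f_\epsilon$, so the conclusion is unaffected.
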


\subsection{Tauberian theorem and related auxiliary results} \label{subsec:tau}
We now state Karamata's classical Tauberian theorem.
\begin{theo}[{\cite[Theorem XIII.5.3]{Fel1971}}] \label{theo:tau}
Let $\nu(d\cdot)$ be a non-negative measure on $\RR_+$, $F(s):= \int_0^t \nu(ds)$ and suppose
\begin{align*}
\widetilde{F}(\lambda) := \int_0^\infty e^{-\lambda s} \nu(ds)
\end{align*}

\noindent exists for $\lambda > 0$. If $C \ge 0$ and $\rho \in [0, \infty)$, then
\begin{align*}
\widetilde{F}(\lambda) \overset{\lambda \to 0^+}{\sim} C \lambda^{-\rho}
\qquad \Leftrightarrow \qquad
F(t) \overset{t \to \infty}{\sim} \frac{C}{\Gamma(1+\rho)} t^\rho.
\end{align*}

\noindent The equivalence  still holds if we consider the above asymptotics with $\lambda \to \infty$ and $t \to 0^+$ instead.
\end{theo}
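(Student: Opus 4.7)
The theorem is Karamata's classical Tauberian theorem, so my plan is to follow the textbook two-step scheme (Abelian direction plus the substantive Tauberian direction via polynomial approximation), which fits naturally with the measure-theoretic setting stated here.

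The Abelian direction ($F(t) \sim C t^\rho/\Gamma(1+\rho) \Rightarrow \widetilde{F}(\lambda) \sim C \lambda^{-\rho}$) is routine. I would integrate by parts and substitute $u = \lambda s$ to write
$$\lambda^\rho \widetilde{F}(\lambda) = \lambda^{1+\rho}\int_0^\infty e^{-\lambda s} F(s)\,ds = \int_0^\infty e^{-u}\, u^\rho \cdot \frac{F(u/\lambda)}{(u/\lambda)^\rho}\,du,$$
and then apply dominated convergence using the hypothesis together with a crude global bound $F(s) \le K(1+s^\rho)$ (which follows from the asymptotics and continuity) to get $\lambda^\rho \widetilde{F}(\lambda) \to C \int_0^\infty e^{-u} u^\rho / \Gamma(1+\rho)\,du = C$.

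For the Tauberian direction I would introduce the rescaled measures $\nu_\lambda(ds) := \lambda^\rho \nu(\lambda^{-1} ds)$, whose Laplace transforms
$$\int_0^\infty e^{-ts}\,\nu_\lambda(ds) = \lambda^\rho \widetilde{F}(\lambda t) \xrightarrow{\lambda \to 0^+} C t^{-\rho}$$
agree in the limit with the Laplace transform of $\mu(ds) := (C/\Gamma(\rho))\, s^{\rho - 1}\,ds$ when $\rho > 0$ (and $\mu = C\delta_0$ when $\rho = 0$). By linearity, $\int h\,d\nu_\lambda \to \int h\,d\mu$ for any finite linear combination of exponentials $e^{-ts}$ with $t \ge 1$. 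The conclusion I want is $\lambda^\rho F(1/\lambda) = \int \mathbf{1}_{[0,1]}\,d\nu_\lambda \to C/\Gamma(1+\rho)$, i.e.\ the same convergence against the discontinuous indicator $\mathbf{1}_{[0,1]}$, after which setting $t = 1/\lambda$ delivers the claimed asymptotics for $F(t)$.

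The upgrade from exponentials to $\mathbf{1}_{[0,1]}$ is Karamata's classical trick: the substitution $u = e^{-s}$ turns each finite sum $\sum_k c_k e^{-(k+1)s}$ into a polynomial $\sum_k c_k u^{k+1}$ on $(0,1]$, so by the Weierstrass approximation theorem I can sandwich the function $u \mapsto u^{-1}\mathbf{1}_{[e^{-1},1]}(u)$ between two polynomials $P_{\pm}$ with $\int_{e^{-1}}^1 (P_+-P_-)\, u^{\rho-1}\,du$ arbitrarily small; pulling back through $u=e^{-s}$ yields finite sums of exponentials that envelope $\mathbf{1}_{[0,1]}(s)$ from above and below. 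Evaluating each envelope against $\nu_\lambda$ and passing to the limit closes the argument. The main obstacle I expect is twofold: first, uniform tail control of $\nu_\lambda$ at infinity, which is provided for free by the uniform boundedness of $\int_0^\infty e^{-s}\,\nu_\lambda(ds) = \lambda^\rho\widetilde{F}(\lambda)$ combined with Markov's inequality, ensuring that the polynomial envelopes give an arbitrarily sharp sandwich across all $\lambda$; and second, the absence of an atom of $\mu$ at the discontinuity point $s=1$ (immediate for $\rho>0$ and trivial for $\rho=0$), which is what allows the upper and lower envelopes to have the same integral against $\mu$ in the limit of refinement. The second equivalence (asymptotics with $\lambda \to \infty$ and $t \to 0^+$) is obtained by the mirror-image rescaling $\nu_\lambda(ds) = \lambda^\rho \nu(\lambda^{-1}ds)$ with $\lambda \to \infty$ together with the same polynomial sandwich on $[0,1]$.
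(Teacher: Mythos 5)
The paper does not prove this statement; it is quoted verbatim from Feller's book \cite{Fel1971} as a known result (Theorem XIII.5.3), so there is no in-paper proof to compare your argument against. That said, your sketch is essentially the classical Feller/Karamata proof of this theorem, and its overall architecture---Abelian direction by integration by parts and dominated convergence, Tauberian direction via the rescaled measures $\nu_\lambda$, Laplace-transform convergence to the reference measure $\mu$, and the polynomial sandwich after the substitution $u=e^{-s}$---is correct.

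Two technical details deserve a closer look. First, after the substitution $u=e^{-s}$ the reference measure $\mu(ds)=\frac{C}{\Gamma(\rho)}s^{\rho-1}ds$ pulls back to $\frac{C}{\Gamma(\rho)}(-\log u)^{\rho-1}\frac{du}{u}$ on $(0,1]$, so once the factor $u$ from the monomials $u^{k+1}$ is absorbed, the correct weight in the error integral is $(-\log u)^{\rho-1}\,du$ over all of $(0,1)$, not $u^{\rho-1}\,du$ over $(e^{-1},1)$ as you wrote; fortunately $\int_0^1(-\log u)^{\rho-1}\,du=\Gamma(\rho)<\infty$, so the correction is harmless. Second, the Weierstrass step cannot be applied directly to the discontinuous target $u\mapsto u^{-1}\mathbf{1}_{[e^{-1},1]}(u)$: you should first sandwich this function between \emph{continuous} functions $h_-\le u^{-1}\mathbf{1}_{[e^{-1},1]}\le h_+$ with small $L^1$-gap against the weight (possible precisely because the weight does not charge $\{e^{-1}\}$), then approximate $h_\pm$ uniformly by polynomials and shift them by a small constant to preserve the sandwiching order. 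Your remark about Markov's inequality controlling the tail of $\nu_\lambda$ is actually unnecessary once the substitution $u=e^{-s}$ compactifies the domain to $[0,1]$; the relevant control is simply the uniform boundedness of the total mass $\int e^{-s}\nu_\lambda(ds)=\lambda^\rho\widetilde F(\lambda)$, which you already have. With these repairs your argument is a correct proof of the theorem, matching the standard reference the paper cites.
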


\begin{rem}
The case where $C=0$ in \Cref{theo:tau} should be interpreted as
\begin{align*}
\widetilde{F}(\lambda) \overset{\lambda \to 0^+}{=} o( \lambda^{-\rho})
\qquad \Leftrightarrow \qquad
F(t) \overset{t \to \infty}{=} o(t^\rho).
\end{align*}
\end{rem}

We also need the following elementary result, the proof of which is skipped.
\begin{lem} \label{lem:aux}
Let $U, V$ be two independent non-negative random variables. Suppose there exists some $C > 0$ and $q > 0$ such that
\begin{align*}
(i) & \qquad \PP(U > t) \overset{t \to \infty}{\sim} C t^{-q}, \\
(ii) & \qquad \EE[V^p] < \infty \qquad \text{for some } p > q.
\end{align*}

\noindent Then the tail behaviour of $UV$ is given by
\begin{align*}
(iii) & \qquad \PP(UV > t) \overset{t \to \infty}{\sim} C \EE[V^q] t^{-q}. \qquad 
\end{align*}

Also, if condition (i) is replaced by 
\begin{align*}
& \qquad \PP(U > t) \le C t^{-q} \qquad \forall t > 0,\\
 (resp. & \qquad \PP(U > t) \ge Ct^{-q} \qquad \forall t > t_0,)
\end{align*} then the conclusion becomes
\begin{align*}
& \qquad \PP(UV > t) \le C \EE[V^q] t^{-q} \qquad \forall t > 0. \\
(resp. & \qquad \PP(UV > t) \ge C \EE \left[\frac{V^q}{a^q} 1_{\{V \le a\}}\right] t^{-q} \qquad \forall t > t_0 \qquad \text{for any } a > 1.)
\end{align*}

\end{lem}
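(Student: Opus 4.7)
The plan is to condition on $V$ and reduce every claim to an estimate for the tail of $U$. By independence and non-negativity,
\begin{align*}
\PP(UV > t) = \int_{(0, \infty)} \PP(U > t/v)\, \PP_V(dv),
\end{align*}
with the integrand interpreted to vanish at $v = 0$. Each of the three conclusions then follows by substituting the corresponding hypothesis on the tail of $U$ into this identity and controlling the integrand via elementary bounds or dominated convergence.

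The upper bound is the cleanest: the hypothesis $\PP(U > s) \le Cs^{-q}$ for all $s > 0$ gives $\PP(U > t/v) \le C (v/t)^q$ directly, and integrating against $\PP_V$ yields $\PP(UV > t) \le Ct^{-q} \EE[V^q]$. For the matching lower bound, I would truncate to $\{V \le a\}$ and use that $t/v \ge t/a > t_0$ on this event provided $t > a t_0$, so that the hypothesis $\PP(U > s) \ge Cs^{-q}$ (valid for $s > t_0$) applies pointwise at $s = t/v$. This produces
\begin{align*}
\PP(UV > t) \ge \int_{(0, a]} C (v/t)^q\, \PP_V(dv) = C t^{-q} \EE\!\left[V^q \mathbf{1}_{V \le a}\right] \ge C t^{-q} \EE\!\left[\frac{V^q}{a^q} \mathbf{1}_{V \le a}\right],
\end{align*}
where the final step uses $a \ge 1$.

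For the asymptotic statement (iii), I would multiply by $t^q$ and apply dominated convergence in $v$. Pointwise, $t^q \PP(U > t/v) = v^q (t/v)^q \PP(U > t/v) \to C v^q$ as $t \to \infty$ by hypothesis (i). A dominating function is obtained by setting $K := \sup_{s > 0} s^q \PP(U > s)$, which is finite since $s^q \PP(U > s) \to C$ at infinity and is bounded by $s^q$ near zero; consequently $t^q \PP(U > t/v) \le K v^q$. The bound $v \mapsto K v^q$ is $\PP_V$-integrable because $\EE[V^p] < \infty$ with $p > q$ forces $\EE[V^q] \le 1 + \EE[V^p] < \infty$ via $V^q \le 1 \vee V^p$. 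Dominated convergence then gives $t^q \PP(UV > t) \to C \EE[V^q]$.

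No step constitutes a serious obstacle, but the most delicate point is the dominated-convergence step for the asymptotic claim: it is exactly here that the strict inequality $p > q$ in hypothesis (ii) is used, since if one only had $\EE[V^q] < \infty$ the dominating function $Kv^q$ would be integrable but the convergence would be only locally uniform, requiring an additional truncation of the integrand near infinity to control tail contributions.
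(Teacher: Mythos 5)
Your overall strategy is correct and is the natural one: condition on $V$ to write $\PP(UV > t) = \int \PP(U > t/v)\,\PP_V(dv)$, then feed in the hypothesis on the tail of $U$ either pointwise or via dominated convergence. The paper itself skips the proof, so there is no alternative argument to compare against, but two points in your write-up should be tightened.

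First, your lower bound is established only for $t > at_0$ rather than for all $t > t_0$ as claimed. You discard the factor $a^{-q}$ at the very end as harmless slack, but it is precisely this factor that buys the missing range of $t$. Split $(0,a]$ into $(0,1)$ and $[1,a]$: for $v \in [1,a]$ and $t > t_0$ one has $t/v \le t$, hence
\begin{align*}
\PP(U > t/v) \ge \PP(U > t) \ge C t^{-q} \ge C \Bigl(\frac{v}{a}\Bigr)^q t^{-q}
\end{align*}
since $(v/a)^q \le 1$; for $v \in (0,1)$ one has $t/v > t > t_0$, hence $\PP(U > t/v) \ge C v^q t^{-q} \ge C (v/a)^q t^{-q}$ because $a \ge 1$ gives $a^{-q} \le 1$. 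In both cases $\PP(U > t/v) \ge C (v/a)^q t^{-q}$ for every $t > t_0$ and $v \in (0,a]$, and integrating over $(0,a]$ yields the bound as stated.

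Second, your closing remark --- that the strict inequality $p > q$ is what makes the dominated-convergence step work, and that with only $\EE[V^q] < \infty$ one would need an additional truncation because the convergence is merely locally uniform --- is incorrect. Dominated convergence requires only pointwise convergence and an integrable dominating function, both of which you have already produced using nothing beyond $\EE[V^q] < \infty$; uniformity of convergence plays no role. In the pure power-law setting of this lemma, where $\sup_{s>0} s^q \PP(U>s) < \infty$ automatically, the conclusion (iii) holds assuming only $\EE[V^q] < \infty$. The stronger hypothesis $p > q$ is the classical condition in Breiman's lemma, which handles general regularly varying tails $\PP(U>t) = L(t) t^{-q}$ where $s^q \PP(U>s)$ need not be bounded and Potter-type bounds together with a truncation genuinely are required; here it is harmless overkill, and in any case it does not play the role your remark assigns to it.
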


\subsection{Three-dimensional Bessel processes}
Finally we collect several results regarding three-dimensional Bessel processes (abbreviated as $\mathrm{BES}(3)$-processes), which will be important for the evaluation of the proportionality constant in \Cref{subsec:eval} and \Cref{app:fusion}. The first two results are due to Williams \cite{Wil1974}, see also e.g. \cite[Chapter VII.4]{RY2004}.

The first result relates the time reversal of Brownian motion from the first hitting time to a $\mathrm{BES}(3)$-process evolving until a last hitting time.
\begin{lem}\label{lem:time_rev}
Let $(B_t)_{t \ge 0}$ be a standard Brownian motion. For $x \ge 0$, let $T_x := \inf \{s > 0: B_s = x\}$ be the first hitting time of the Brownian motion. Then
\begin{align*}
(B_{T_x - t})_{t \le T_x} \overset{d}{=} (x - \beta_t^0)_{t \le L_{x}}
\end{align*}

\noindent where $(\beta_t^0)_{t \ge 0}$ is a $\mathrm{BES}(3)$-process starting from $0$ and $L_{x} = \sup \{s > 0: \beta_s^0 = x \}$ is the last hitting time of the Bessel process.
\end{lem}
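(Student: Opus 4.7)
This is D.~Williams' classical path decomposition, and I would derive it via Markovian time reversal at random times together with Doob's $h$-transform formalism, as developed in \cite{RY2004}. Set $Z_t := x - B_{T_x - t}$ for $0 \le t \le T_x$: since $B_s < x$ for $s \in [0, T_x)$ and $B_{T_x} = x$, we have $Z_0 = 0$, $Z_{T_x} = x$, and $Z_t > 0$ for $0 < t \le T_x$, so the claim amounts to identifying the joint law of $(T_x, (Z_t)_{t \le T_x})$ with that of $(L_x, (\beta^0_t)_{t \le L_x})$.

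The key input is that BES(3) admits an intrinsic representation as the Doob $h$-transform of Brownian motion on $(0,\infty)$ killed upon hitting $0$, with harmonic function $h(z) = z$. I would then invoke the Nagasawa (or Kelvin) time-reversal formula for Markov processes at a last-exit time: reversing an $h$-transformed process at its last visit to a point $x$ undoes the $h$-transform and yields a trajectory of the underlying killed process issued from $x$. Applied to $\beta^0$ reversed from $L_x$, this identifies the law of $(\beta^0_{L_x - t})_{0 \le t \le L_x}$ with that of a standard linear Brownian motion started at $x$ and run until its first hit of $0$. Translating and reflecting via $y \mapsto x - y$ then converts this killed Brownian motion from $x$ into a Brownian motion from $0$ stopped at $T_x$, and reading the resulting equality of laws backwards via the substitution $s = L_x - t$ yields precisely $(B_{T_x - t})_{t \le T_x} \stackrel{d}{=} (x - \beta^0_t)_{t \le L_x}$.

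The main technical hurdle is justifying time reversal at the random, non-stopping time $L_x$, which requires the co-terminal version of Nagasawa's theorem together with the regularity of $x$ for the BES(3) diffusion; I would simply invoke it from \cite[Ch.~VII]{RY2004}. As a sanity check and fallback, the identity can be verified directly at the level of finite-dimensional distributions by combining the reflection-principle joint density of $(T_x, B_{s_1}, \ldots, B_{s_n})$ with the explicit BES(3) entrance density $p^{\beta^0}_t(0,z) = \sqrt{2/(\pi t^3)}\, z^2 e^{-z^2/(2t)}$; the resulting Gaussian algebra is routine but tedious, which is why the conceptual $h$-transform argument is preferable.
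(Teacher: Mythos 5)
Your proposal is correct, and it is essentially the argument from \cite[Chapter VII.4]{RY2004} that the paper cites without reproducing: represent $\mathrm{BES}(3)$ as the Doob $h$-transform of Brownian motion killed at $0$ with $h(z)=z$, invoke Nagasawa's time-reversal at the co-terminal time $L_x$ to undo the $h$-transform, then translate/reflect via $y\mapsto x-y$ and read the path identity backwards. The bookkeeping (that $Z_t=x-B_{T_x-t}$ runs from $0$ at $t=0$ to $x$ at $t=T_x$ and stays positive, and that the substitution $s=L_x-t$ inverts the reversal) is handled correctly, and your stated entrance density $p^{\beta^0}_t(0,z)=\sqrt{2/(\pi t^3)}\,z^2e^{-z^2/(2t)}$ for the finite-dimensional fallback is the right one.
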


The second result provides a path decomposition of $\mathrm{BES}(3)$-processes.
\begin{lem}\label{lem:BES_path}
Let $x > 0$ and consider the following independent objects:
\begin{itemize}
\item $(B_t)_{t \ge 0}$ is a standard Brownian motion.
\item $U$ is a $\mathrm{Uniform}[0,1]$ random variable.
\item $(\beta_t^0)_{t \ge 0}$ is a $\mathrm{BES}(3)$-process starting from $0$.
\end{itemize}

\noindent Then the process $(R_t)_{t \ge 0}$ defined by
\begin{align*}
R_t = \begin{cases}
x + B_t & t \le T_{-x(1-U)}, \\
xU + \beta_{t-T_{-x(1-U)}}^0 & t \ge T_{-x(1-U)},
\end{cases}
\end{align*}

\noindent with
\begin{align*}
T_{-x(1-U)} = \inf \{ t > 0: B_t = -x(1-U)\} = \inf \{t > 0: x + B_t = xU \}
\end{align*}

\noindent is a $\mathrm{BES}(3)$-process starting from $x$ (abbreviated as $\mathrm{BES}_x(3)$-process).
\end{lem}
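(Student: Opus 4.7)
The plan is to establish this as Williams' path decomposition of a $\mathrm{BES}_x(3)$-process at its global infimum. Let $R^*$ denote a genuine $\mathrm{BES}_x(3)$-process and $\underline{R^*}:=\inf_{t\ge 0}R^*_t$. The proof proceeds by identifying three independent ingredients---the infimum level, the pre-infimum path, and the post-infimum path---and then pasting them together to recover the construction in the statement.

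First I compute the law of $\underline{R^*}$. Using the SDE $dR^*_t = dB_t + (R^*_t)^{-1}\,dt$, an It\^o calculation shows that $1/R^*_t$ is a positive local martingale on $\{R^*_t>0\}$. Optional stopping at $T_m\wedge T_N$ for $0<m<x<N$, combined with transience of $\mathrm{BES}(3)$ and the limit $N\to\infty$, gives $\PP_x(T_m<\infty)=m/x$ and hence $\underline{R^*}/x\sim\mathrm{Uniform}[0,1]$. This accounts for the role of the uniform variable $U$ in the statement via $m=xU$.

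Next I identify the pre-infimum portion $(R^*_t)_{0\le t\le T_m}$, conditional on $\{\underline{R^*}=m\}$, as the Doob $h$-transform of $\mathrm{BES}_x(3)$ by the $\mathrm{BES}(3)$-harmonic function $h(r)=m/r$. The transformed generator reduces to $\frac{1}{2}\partial_r^2+(\frac{1}{r}+\frac{h'(r)}{h(r)})\partial_r = \frac{1}{2}\partial_r^2$, so the pre-infimum piece is a Brownian motion started at $x$ and stopped at its first hitting of $m$; with $m=xU$ this is precisely $x+B_t$ stopped at $T_{-x(1-U)}$. For the post-infimum, the strong Markov property at $T_m$ gives that $(R^*_{T_m+t})_{t\ge 0}$, conditionally on $\{\underline{R^*}=m\}$, is a $\mathrm{BES}_m(3)$-process Doob-conditioned never to return to $m$, i.e.\ the $h$-transform by $h(r)=1-m/r$. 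A direct computation shows that the resulting generator is $\frac{1}{2}\partial_r^2+(r-m)^{-1}\partial_r$, which coincides with the generator of the shifted process $m+\beta^0$ for $\beta^0$ a $\mathrm{BES}_0(3)$-process (shifting by $m$ pushes the drift $1/r$ of the latter to $1/(r-m)$). Hence the post-infimum path has the same law as $m+\beta^0$ started fresh at $T_m$.

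Independence of the three blocks follows by combining the strong Markov property at $T_m$ with the two $h$-transform identifications above, and pasting them together reproduces verbatim the construction of $R$ in the lemma. The main technical subtlety I foresee is the rigorous handling of the conditioning on the null event $\{\underline{R^*}=m\}$; the standard route is to first condition on $\{\underline{R^*}\in(m,m+\varepsilon)\}$, exploit the uniform law from Step 1 to extract a regular conditional distribution, and then pass to the limit $\varepsilon\to 0^+$ using continuity of the $h$-transformed transition semigroups in the parameter $m$.
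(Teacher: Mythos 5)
The paper does not supply a proof of this lemma: it is stated as a classical preliminary and attributed directly to Williams (1974), with Revuz--Yor, Chapter VII.4, given as a textbook reference. So there is no ``paper's proof'' to compare against; the question is whether your argument is a correct proof of Williams' decomposition.

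Your argument via Doob $h$-transforms is correct and is, in fact, the standard modern route (Williams' original proof was excursion-theoretic). The three computations are all right: $1/R^*$ being a local martingale gives $\PP_x(T_m<\infty)=m/x$ and hence $\underline{R^*}/x\sim\mathrm{Uniform}[0,1]$; the $h$-transform of the $\mathrm{BES}(3)$ generator $\tfrac12\partial_r^2+r^{-1}\partial_r$ by $h(r)=m/r$ kills the drift and yields Brownian motion; and the $h$-transform by $h(r)=1-m/r$ produces drift $(r-m)^{-1}$, which is exactly the generator of $m+\beta^0$. The factorisation into three independent blocks then follows from the strong Markov property at $T_m$ together with the observation that $\{\underline{R^*}=m\}$ factors as $\{T_m<\infty\}$ (pre-$T_m$-measurable) intersected with $\{$post-$T_m$ path stays above $m\}$ (post-$T_m$-measurable), so the conditioning splits.

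Two technical points deserve more care than your sketch gives them, though both are fixable by the route you yourself indicate. First, $\{\underline{R^*}=m\}$ is a null event; making ``conditional on $\{\underline{R^*}=m\}$'' precise requires either the smoothing-in-$m$ argument you mention, or equivalently differentiating $\PP(\underline{R^*}\le m)=m/x$ against the strong-Markov factorisation. Second, for the post-infimum block the harmonic function $h(r)=1-m/r$ vanishes at the starting level $r=m$, so the $h$-transformed process is started at an entrance boundary rather than via a genuine change of measure from $\mathrm{BES}_m(3)$ (which would return to $m$ a.s.). Your identification with the shifted entrance law $m+\beta^0_t$, where $\beta^0$ is $\mathrm{BES}_0(3)$, is exactly the right way to handle this, but it is worth stating explicitly that one is matching entrance laws, not $h$-transforming a process actually started at $m$. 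With those caveats spelled out, the proof is complete and agrees with the cited result.
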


The last result relates Brownian motions to $\mathrm{BES}(3)$-processes via a change of measure.
\begin{lem}\label{lem:RadNik}
Let $(B_t)_{t \ge 0}$ be the standard Brownian motion with its natural filtration $(\Fa_t)_{t \ge 0}$ under the probability measure $\PP$. For any $x > 0$,
\begin{itemize}
\item The process $t \mapsto 1_{\{\max_{s \le t} x - B_s\}} (x-B_s)$ is an $(\Fa_t)_t$-martingale with respect to $\PP$. In particular $\EE\left[1_{\{\max_{s \le t} x - B_s\}} (x-B_s)\right] = x$ for any $t > 0$.
\item The collection of probability measures $(\QQ_t^x)_{t \ge 0}$ defined by the Radon-Nikodym derivative
\begin{align*}
\frac{d\QQ_t^x}{d\PP} ((u_t)_t) = \frac{1}{x} 1_{\{\max_{s \le t} u_s \le x\}} (x - u_t)
\end{align*}

\noindent is compatible in the sense that $\QQ_t^x|_{\Fa_s} = \QQ_s^x$ for any $s < t$. In particular, there exists a probability measure $\QQ^x$ on $\Fa_\infty$ such that $\QQ^x|_{\Fa_t} = \QQ_t^x$ and under which the path
\begin{align*}
t \mapsto x - B_t
\end{align*}

\noindent evolves as a $\mathrm{BES}_x(3)$-process.
\end{itemize}
\end{lem}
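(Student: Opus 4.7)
My plan is to read the indicator in the first bullet as $1_{\{\max_{s \le t} B_s \le x\}}$ (so as to be consistent with the Radon--Nikodym density in the second bullet), identify the corresponding process as a stopped martingale, deduce the compatibility of $(\QQ_t^x)_t$ from the martingale property, and finally recognise the resulting measure $\QQ^x$ as a Doob $h$-transform, which classically yields a $\mathrm{BES}_x(3)$-process.

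For the first bullet, let $T_x := \inf\{s \ge 0 : B_s = x\}$. On $\{T_x > t\}$ the indicator equals $1$ and $B_t = B_{t \wedge T_x}$; on $\{T_x \le t\}$ the indicator vanishes and simultaneously $x - B_{t \wedge T_x} = 0$. Hence the process in question coincides almost surely with $M_t := x - B_{t \wedge T_x}$. Since $t \mapsto x - B_t$ is an $(\Fa_t)_t$-martingale under $\PP$, optional sampling applied to the bounded stopping time $t \wedge T_x$ gives the martingale property of $(M_t)_t$, and in particular $\EE[M_t] = x$.

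The compatibility $\QQ_t^x|_{\Fa_s} = \QQ_s^x$ for $s < t$ is a direct reformulation of $\EE[M_t \mid \Fa_s] = M_s$, since the respective Radon--Nikodym derivatives with respect to $\PP$ are $M_t/x$ and $M_s/x$. Working on the canonical Wiener space $C([0,\infty); \RR)$ equipped with the coordinate filtration, this consistent family of probability measures admits a unique extension to a probability $\QQ^x$ on $\Fa_\infty$ by the Ionescu--Tulcea / Kolmogorov extension theorem.

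For the identification under $\QQ^x$, set $Y_t := x - B_t$, which under $\PP$ is a Brownian motion from $x$, and let $\tau_0 := \inf\{s : Y_s = 0\}$, so $M_t = Y_{t \wedge \tau_0}$. The density $Y_{t \wedge \tau_0}/x$ is exactly the Doob $h$-transform weight for the harmonic function $h(y) = y$ applied to Brownian motion on $(0, \infty)$ killed at $0$. A standard computation using the reflection-principle kernel $p_t^\dagger(y, z) = p_t(y, z) - p_t(y, -z)$ shows that the $h$-transformed semigroup $\tfrac{z}{y} p_t^\dagger(y, z)$ coincides with the three-dimensional Bessel transition density, so that $(Y_t)_t$ under $\QQ^x$ has the law of a $\mathrm{BES}_x(3)$-process. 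The one step requiring a measure-theoretic caveat is the extension to $\Fa_\infty$, which is routine on canonical space but not automatic on arbitrary filtered probability spaces; all other ingredients are standard optional sampling and the classical Bessel-via-$h$-transform identification.
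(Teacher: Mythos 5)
The paper itself states this lemma without proof, as a classical fact belonging to Williams' path decomposition circle of ideas (cf.\ the citation of \cite{Wil1974} and \cite[Chapter VII.4]{RY2004} at the start of \Cref{subsec:tau}'s Bessel subsection). Your proposal supplies a correct proof along the standard lines. You correctly repair the two typos in the statement (the indicator should read $1_{\{\max_{s\le t}B_s\le x\}}$, and the second factor should be $x-B_t$ rather than $x-B_s$), identify the process as the stopped martingale $x-B_{t\wedge T_x}$, deduce compatibility from the martingale property, invoke Kolmogorov extension on canonical Wiener space, and identify the limiting law via the Doob $h$-transform with $h(y)=y$ on $(0,\infty)$. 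The $h$-transform computation on the killed heat kernel $p_t^{\dagger}(y,z)=p_t(y,z)-p_t(y,-z)$ indeed reproduces the $\mathrm{BES}(3)$ transition density (equivalently, at the generator level one checks $\frac{1}{y}\bigl(\frac{1}{2}(yf)''\bigr)=\frac{1}{2}f''+\frac{1}{y}f'$). Your caveat that extension to $\Fa_\infty$ is automatic only on canonical space is well placed. There is nothing in the paper to compare against, but the argument you give is complete, self-contained, and precisely the standard one.
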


\section{Main proofs} \label{sec:proof}
We give an outline of this section, which is devoted to the proof of \Cref{theo:main}.

In \Cref{subsec:partial}, we state a partial result regarding the tail probability of critical GMCs for $L$-exact kernels. The result is incomplete as it only applies to $g(x) \equiv 1$ on $A = (0, a)^d$ and our goal is to understand how the leading order coefficient behaves as we vary $L$ and $a$.

In \Cref{subsec:re}, we reformulate the desired tail asymptotics \eqref{eq:main_result} as a Laplace transform estimate, and explain several reductions that may be achieved and shall be assumed in the rest of our proof.

In \Cref{subsec:split}, we present our simple yet important splitting lemma and extend the partial tail result in \Cref{subsec:partial} to continuous density functions $g(x)$.

In \Cref{subsec:universal}, we explain how the splitting lemma allows for local approximations, and ultimately prove that
\begin{align*}
\PP\left(\mu_{f, g}(A) > t \right) \overset{t \to \infty}{\sim} \frac{\overline{C}_d \int_A g(v) dv}{t}
\end{align*}

\noindent for some constant $\overline{C}_d$ that does not depend on the function $f$ appearing in the covariance structure \eqref{eq:cov}, the test set $A$ or the density function $g$.

Finally, in \Cref{subsec:eval}, we evaluate the mysterious constant $\overline{C}_d$ that appears in the leading order coefficient, by considering suitable reference critical GMCs that allow tractable calculations.

\subsection{A partial tail result} \label{subsec:partial}
We commence with a result concerning exact kernels in arbitrary dimension $d$.
\begin{lem}\label{lem:partial}
Let $Y_L$ be the $L$-exact field on $B(0, r_d(L))$ and $\mu_L(dx)$ the associated critical GMC. Let $a > 0$ be a fixed number such that $[0, a]^d \subset B(0, r_d(L))$. Then there exists some constant $C_{L, a, d} > 0$ such that
\begin{align}\label{eq:partial}
\PP\left(\mu_{L}([0, a]^d) > t\right) \overset{t \to \infty}{\sim} \frac{C_{L, a, d}}{t}.
\end{align}
\end{lem}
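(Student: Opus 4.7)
The plan is to adapt the stochastic fixed point argument of \cite{BKNSW2015}, which handled the case $d \le 2$ with $L=0$ and $a=1$, to the full generality claimed here. The argument proceeds in two stages: (a) distributional reductions that absorb the $(L,a)$-dependence into explicit Gaussian multipliers, and (b) the proof of a single base case via a stochastic fixed point equation (SFPE) and Goldie's implicit renewal theorem.

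Stage (a) relies on two identities arising from the structure of $K_L(x,y) = -\log|x-y|+L$. First, for $L \ge 0$ with both the $L$-exact and $0$-exact fields defined on the relevant ball, the coupling $Y_L \overset{d}{=} Y_0 + \sqrt{L}\, N$ with $N \sim \mathcal{N}(0,1)$ independent of $Y_0$ lifts through the Seneta--Heyde renormalisation (by linearity of the mollification) to
\begin{align*}
\mu_L(A) \overset{d}{=} e^{\sqrt{2dL}\, N - dL}\, \mu_0(A);
\end{align*}
a symmetric argument handles $L < 0$. Secondly, a change of variables together with the stationarity and scale covariance of $Y_L$ yields, for $s \in (0, 1)$,
\begin{align*}
\mu_L(sA) \overset{d}{=} s^d\, e^{\sqrt{2d}\,\Omega_s - d\,\EE[\Omega_s^2]}\, \mu_L'(A), \qquad \Omega_s \sim \mathcal{N}(0, -\log s),
\end{align*}
with $\mu_L'$ an independent copy of $\mu_L$ (the Seneta--Heyde prefactor is absorbed by the observation that $\sqrt{\log(1/(s\epsilon))}/\sqrt{\log(1/\epsilon)} \to 1$ as $\epsilon \to 0$). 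Both multipliers have finite moments of all orders and unit mean, so \Cref{lem:aux} transfers any tail asymptotic of the form $C/t$ at a single base pair $(L_0, a_0)$ to all admissible $(L, a)$, with $C_{L,a,d}$ given explicitly in terms of the base constant.

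For stage (b), fix $a_0 > 0$ small enough that $[0, a_0]^d \subset B(0, r_d)$ and aim to show $\PP(U > t) \sim C/t$ for $U := \mu_0([0, a_0]^d)$. Partition $[0, a_0]^d$ into its $2^d$ dyadic sub-cubes $C_i$ and, invoking the $*$-scale invariant construction of \cite{RV2010}, decompose the $0$-exact field on a neighbourhood of $[0, a_0]^d$ as $Y_0(x) = Z(x) + \tilde{Y}^{(i)}(x)$ for $x \in C_i$, where $Z$ is a continuous Gaussian ``coarse'' field independent of the mutually independent scaled copies $\tilde{Y}^{(i)}$ of $Y_0$ on each sub-cube. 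Combined with the scaling identity above, this yields a SFPE
\begin{align*}
U \overset{d}{=} \sum_{i=1}^{2^d} B_i\, U_i,
\end{align*}
where the $U_i$ are i.i.d.\ copies of $U$ independent of weights $B_i$ that are explicit functions of the Gaussians $Z(v_i)$ and $\Omega_{1/2}^{(i)}$, and for which a direct calculation gives $\EE[\sum_i B_i] = 1$. Goldie's implicit renewal theorem then yields the desired tail asymptotic with critical exponent $\kappa = 1$; positivity of the constant follows from the non-triviality of $U$ guaranteed by \Cref{lem:GMC_moment}.

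The principal obstacle is constructing the decomposition $Y_0 = Z + \tilde{Y}^{(i)}$ with exact independence of the $\tilde{Y}^{(i)}$'s across sub-cubes in general dimension $d \ge 3$ --- in $d \le 2$ this is essentially what is carried out in \cite{BKNSW2015}. Once the decomposition is in place, verifying the standard hypotheses of Goldie's theorem (log-integrability and non-arithmeticity of the weights) reduces to routine Gaussian moment estimates controlled by \Cref{lem:ctsGP}.
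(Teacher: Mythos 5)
Your proposal follows the same general route as the paper, which also outsources the substance of the argument to the SFPE / Goldie framework of \cite{BKNSW2015} (the $d\le 2$ cases being their Theorem~1 and Theorem~25) and only adds what is needed to push the argument to higher dimensions. Your Stage~(a) reductions are essentially correct and mirror the rescaling the paper uses in \Cref{cor:pscale}, and your base-case sketch is a fair description of what \cite{BKNSW2015} do.

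However, you misidentify the missing ingredient for $d \ge 3$, and this is not a cosmetic point --- it is exactly where the paper's proof has content. You locate the obstacle in ``constructing the decomposition $Y_0 = Z + \tilde{Y}^{(i)}$ with exact independence of the $\tilde{Y}^{(i)}$'s across sub-cubes.'' In fact the $*$-scale invariant decomposition is available in all dimensions and was never the issue; moreover, the $U_i$'s in the resulting SFPE are \emph{not} i.i.d.: neighbouring sub-cubes share fine-scale correlations near their common boundary, and the SFPE is only a perturbed fixed-point equation. What \cite{BKNSW2015} themselves flag as the missing estimate for $d \ge 3$ --- and what the paper supplies --- is a cross-moment bound of the form
\begin{align*}
\EE\left[\mu(B_1)^h\, \mu(B_2)^h\right] < \infty \qquad \text{for some } h > \tfrac{1}{2},
\end{align*}
for disjoint $B_1, B_2$ separated by a hyperplane. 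This is precisely the paper's \Cref{lem:cross}, proved in \Cref{app:cross} (the analogue of \cite[Lemma~29]{BKNSW2015}), and it is what controls the boundary cross terms so that Goldie's \emph{implicit} renewal theorem can be applied to the perturbed SFPE. Your proposal omits this estimate entirely and therefore does not close the argument; filling it in is the actual content of the paper's proof of this lemma.
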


\Cref{lem:partial} is essentially due to \cite{BKNSW2015}, where the $d \le 2$ cases were established as Theorem 1 and Theorem 25 there. Quoting the discussion before the Appendices in \cite{BKNSW2015}, the proof of \Cref{lem:partial} for $d \le 2$ may be extended to higher dimensions immediately as long as one has the existence of the corresponding critical chaos (which has now been addressed) and an estimate analogous of \cite[Lemma 29]{BKNSW2015} for $d \ge 3$. For later applications we state and prove this analogous result for general GMCs.
\begin{lem}\label{lem:cross}
Let $B_1, B_2$ be two disjoint subsets of $D$ separated by a hyperplane, i.e. $B_1 \cap B_2 = \emptyset$ and there exists some $a \in \RR^d$ and $c \in \RR$ such that $\partial B_1 \cap \partial B_2 \subset \{x \in \RR^d: \langle a, x \rangle = c \}$. Then for any $h \in [0, \frac{1}{2} + \frac{1}{2\sqrt{d}})$ we have
\begin{align}\label{eq:cross_bound}
\EE\left[\mu_{f, g}(B_1)^h \mu_{f, g}(B_2)^h \right] < \infty.
\end{align}
\end{lem}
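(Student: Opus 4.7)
The plan is to exploit the geometric hyperplane separation via a dyadic decomposition of $B_1, B_2$ according to distance from the hyperplane, since the naive Cauchy--Schwarz bound $\EE[\mu_{f,g}(B_1)^h\mu_{f,g}(B_2)^h]\le \EE[\mu_{f,g}(B_1)^{2h}]^{1/2}\EE[\mu_{f,g}(B_2)^{2h}]^{1/2}$ only yields $h<1/2$. First I would use Kahane's comparison \eqref{eq:Gcomp}, applied at the mollified level and passed to the critical limit, together with \Cref{lem:exact}, to reduce to a convenient reference log-correlated field (for instance an $L$-exact field on a sufficiently large ball $B(0,r_d(L))$ containing $B_1 \cup B_2$) enjoying a clean $*$-scale invariant white-noise representation. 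The continuous density $g$ is absorbed into an $O(1)$ multiplicative constant by its boundedness on $\overline{A}$, reducing to the case $g \equiv 1$.

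Letting $H$ denote the separating hyperplane and $d_H(\cdot):=\dist(\cdot,H)$, decompose each set into dyadic strips
\[
B_i^k := \{x \in B_i : d_H(x) \in (2^{-k-1},\, 2^{-k}]\}, \qquad k \ge 0,
\]
so that $B_i = \bigsqcup_{k \ge 0} B_i^k$ up to a Lebesgue-null subset of $H$. The key geometric fact $|x-y| \ge d_H(x)+d_H(y)$ for $x,y$ on opposite sides of $H$ gives $|x-y| \gtrsim 2^{-\min(k_1,k_2)}$ for $x \in B_1^{k_1},\, y \in B_2^{k_2}$, hence the cross-covariance satisfies $\EE[X(x)X(y)] \le \min(k_1,k_2)\log 2 + O(1)$. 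Sub-additivity of $a \mapsto a^h$ for $h \in [0,1]$ then yields
\[
\EE[\mu_{f,g}(B_1)^h \mu_{f,g}(B_2)^h] \le \sum_{k_1, k_2 \ge 0} \EE[\mu_{f,g}(B_1^{k_1})^h \mu_{f,g}(B_2^{k_2})^h].
\]

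For each generic term I would perform a white-noise split $X = X_> + X_<$ at cutoff scale $\rho = 2^{-\min(k_1,k_2)}$: the fine part $X_<$ is independent on the two strips by construction, while the coarse part $X_>$ has pointwise variance at most $\min(k_1,k_2)\log 2 + O(1)$. Conditioning on $X_>$ and using the independence of the resulting fine-scale chaoses, the cross-moment factorises into an exponential tilt in $X_>$ times the product of two independent single-strip moments. Each single-strip moment is then controlled by covering $B_i^{k_i}$ with $O(2^{k_i(d-1)})$ balls of radius $2^{-k_i}$ and applying the ball moment bound from \Cref{lem:GMC_moment}(ii) with exponent $\xi(h) = 2dh - dh^2$, giving an estimate of order $2^{-k_i(\xi(h)-d+1)}$. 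The resulting upper bound on each generic term is then of the form $C \cdot 2^{\beta(h)\min(k_1,k_2) - \alpha(h)(k_1+k_2)}$, and optimizing the decoupling is expected to produce the advertised summability range $h < 1/2 + 1/(2\sqrt{d})$.

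The hard part will be that $F(x,y) = x^h y^h$ fails to be jointly convex for $h \in (1/2,1)$, so Kahane's inequality cannot decouple the two strips directly via a shared bounded Gaussian: one is forced into the conditioning-based route above. Matching the cutoff $\rho$ optimally to the two strip scales $2^{-k_1}, 2^{-k_2}$ while tracking the quadratic form of $\xi(h)$ through the exponential tilt is delicate, since a crude implementation over-counts the contribution of the shared coarse field and yields only a strictly smaller convergence range. Obtaining the sharp threshold $h < 1/2 + 1/(2\sqrt{d})$ therefore requires carefully balancing the fine-scale independence against the precise quadratic behaviour of $\xi(h)$ near its maximum at $h=1$.
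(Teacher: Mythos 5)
Your reduction step (Kahane comparison to reduce $f$ to an $L$-exact kernel and absorbing $g$ via its boundedness) matches the paper's first move, but from there you take a genuinely different route and, as currently written, it has a gap at exactly the point that matters.

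The crucial step is the per-pair bound and the resulting summability over $(k_1,k_2)$, and this is precisely what you leave unverified: you write that ``optimizing the decoupling is expected to produce'' the range $h<\tfrac12+\tfrac{1}{2\sqrt d}$ and acknowledge that a ``crude implementation... yields only a strictly smaller convergence range.'' That caveat is not cosmetic. Your dyadic strips $B_i^k$ have thickness $\sim 2^{-k}$ but macroscopic ($O(1)$) extent along the hyperplane, so neither the global cross-covariance bound $\EE[X(x)X(y)]\le \min(k_1,k_2)\log 2+O(1)$ nor the exponential tilt by the coarse field $X_>$ can be treated as a single scalar factor as your write-up suggests: $X_>$ fluctuates across the strip, and the coarse-field cross-correlation between a cube in $B_1^{k_1}$ and one in $B_2^{k_2}$ depends strongly on their \emph{lateral} separation. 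Consequently the generic term is not of the form $C\cdot 2^{\beta(h)\min(k_1,k_2)-\alpha(h)(k_1+k_2)}$; one needs an additional dyadic sum over lateral scales $j$ between the paired sub-cubes, with a correlation penalty varying with $j$, and only after balancing that extra index against the ball-moment exponent $\xi(h)=2dh-dh^2$ does the threshold $4d(h-\tfrac12)^2<1$ emerge. Plugging your claimed exponents into your claimed form gives a strictly smaller admissible range (and nothing at all in high dimension), which is the failure mode you yourself anticipate but do not resolve. Separately, your assertion that Kahane's inequality cannot decouple the two sets because $x^h y^h$ is not jointly convex overlooks that one can regard $\mu(A_1)\mu(A_2)$ as the GMC of the two-variable field $Y(x)+Y(y)+N_K$ on $A_1\times A_2$ and apply Kahane to the \emph{single}-variable concave map $w\mapsto w^h$, comparing against the independent-copy field $Y(x)+\widetilde Y(y)$; this is exactly how \Cref{lem:cross_dis} in the paper decouples separated sets, so the conditioning route you propose is not forced.

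For comparison, the paper's proof does not use a white-noise split at all. After the same reduction it establishes \Cref{lem:cross_dis} (finite cross-moments for positively separated sets, via the product-space Kahane trick above), reduces the hyperplane-separated case to finitely many ``$k$-configurations'' — pairs of unit cubes $[0,r]^d$ and $[-r,0]^{d-k}\times[0,r]^k$ sharing a $k$-dimensional facet — and then runs a self-similar recursion: each $k$-configuration of size $r$ decomposes into $j$-configurations ($j\le k$) of size $r/2$ plus well-separated residuals controlled by \Cref{lem:cross_dis}, with the per-generation scaling factor $2^{4d(h-1/2)^2-d}$ coming from exact scale invariance of the exact field and \Cref{lem:aux}. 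Geometric summability of the resulting series forces $k+4d(h-\tfrac12)^2<d$, and the worst configuration $k=d-1$ (a shared full face) yields the threshold $h<\tfrac12+\tfrac{1}{2\sqrt d}$. This recursion handles the lateral structure automatically and avoids the delicate single-scale optimisation your approach would need. Your strategy is not hopeless — a careful pairwise cube decomposition tracking lateral separation can be made to produce the same threshold — but the step you defer is the whole content of the lemma.
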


The proof of \Cref{lem:cross} is postponed to \Cref{app:cross}, and we refer the readers to \cite{BKNSW2015} for the arguments leading to a proof of \Cref{lem:partial}.

Going back to the statement of \Cref{lem:partial}, we note that the method in \cite{BKNSW2015} provides a probabilistic representation for the constant $C_{L, a,d}$ in \eqref{eq:partial} (which we do not need here) but its value is not known a priori in any dimension. It is easy to show, however, that
\begin{cor}\label{cor:pscale}
There exists some $\overline{C}_d > 0$ such that $C_{L, a, d} = a^d \overline{C}_d$.
\end{cor}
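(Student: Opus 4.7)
The plan has two stages: a spatial rescaling showing $C_{L,a,d} = a^d C_{L-\log a, 1, d}$, and an additive invariance showing that $C_{L,1,d}$ does not depend on $L$. Setting $\overline{C}_d := C_{L,1,d}$ for any admissible $L$ then yields the claim.

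For the rescaling step, I would introduce $\tilde Y(y) := Y_L(ay)$ for $y \in B(0, r_d(L)/a)$, whose covariance
\[
\EE[\tilde Y(y_1)\tilde Y(y_2)] = -\log|y_1-y_2| + (L-\log a)
\]
matches that of an $(L-\log a)$-exact field. The hypothesis $[0,a]^d \subset B(0, r_d(L))$ forces $r_d(L)/a \ge \sqrt{d} \ge 1$, so $[0,1]^d$ lies in the domain of $\tilde Y$ and hence in $B(0, r_d(L-\log a))$ (since $r_d$ is the maximal admissible radius for an $(L-\log a)$-exact field). A direct change of variable in the convolution defining the mollification gives $\tilde Y_\eta(y) = Y_{L, a\eta}(ay)$, so substituting $x = ay$ and $\epsilon = a\eta$ in the Seneta--Heyde approximation and using $\log(1/\eta) = \log(1/\epsilon) + \log a$ yields
\[
\mu_{\tilde Y}([0,1]^d) = a^{-d} \lim_{\epsilon \to 0^+} \frac{(\log(1/\epsilon)+\log a)^{1/2}}{(\log(1/\epsilon))^{1/2}}\, \mu_{L,\epsilon}([0,a]^d) = a^{-d}\mu_L([0,a]^d),
\]
the prefactor tending to $1$. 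Hence $\mu_L([0,a]^d) \overset{d}{=} a^d \mu_{L-\log a}([0,1]^d)$ and $C_{L,a,d} = a^d C_{L-\log a,1,d}$.

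For the $L$-invariance step, pick any admissible $L_0 < L$ and couple the fields by $Y_L \overset{d}{=} Y_{L_0} + N$ with $N \sim \mathcal{N}(0, L-L_0)$ independent of $Y_{L_0}$; this matches covariances. Since mollification leaves $N$ unchanged and $\EE[Y_{L,\epsilon}(x)^2] = \EE[Y_{L_0,\epsilon}(x)^2] + (L-L_0)$, the Seneta--Heyde limit factorises as
\[
\mu_L([0,1]^d) \overset{d}{=} V \cdot \mu_{L_0}([0,1]^d), \qquad V := e^{\sqrt{2d}\,N - d(L-L_0)},
\]
with $V$ independent of $\mu_{L_0}([0,1]^d)$. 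The log-normal factor $V$ has $\EE[V^p] < \infty$ for all $p$ and $\EE[V] = 1$, so applying \Cref{lem:aux} with $U := \mu_{L_0}([0,1]^d)$, $q = 1$ and any $p > 1$ gives
\[
\PP(\mu_L([0,1]^d) > t) \overset{t \to \infty}{\sim} \EE[V]\,C_{L_0,1,d}/t = C_{L_0,1,d}/t,
\]
so $C_{L,1,d} = C_{L_0,1,d}$ as required. The only delicate point is controlling the Seneta--Heyde prefactor across the spatial rescaling, which is handled cleanly by the asymptotic $\log(1/\epsilon) + \log a \sim \log(1/\epsilon)$; everything else reduces to straightforward bookkeeping.
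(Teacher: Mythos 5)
Your proof is correct and relies on the same two ingredients as the paper's: the scaling identity $-\log|au-av| = -\log|u-v| - \log a$ for the exact kernel, and \Cref{lem:aux} applied to an independent log-normal multiplicative factor with unit mean. The paper organises the argument slightly differently: it couples $(Y_L(au))_{u\in[0,1]^d}$ directly with $(Y_L(u)+N_a)_{u\in[0,1]^d}$, where $N_a\sim N(0,-\log a)$ is independent, so that $\mu_L([0,a]^d)\overset{d}{=}a^d e^{\sqrt{2d}N_a-d\,\EE[N_a^2]}\mu_L([0,1]^d)$ with the \emph{same} $L$ on both sides; a single application of \Cref{lem:aux} then gives $C_{L,a,d}=a^d C_{L,1,d}$, and the $L$-invariance is dispatched with a ``similarly''. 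Your version passes through the intermediate quantity $C_{L-\log a,1,d}$, so the $L$-invariance step is not optional for you but is exactly what you prove in your second stage; the content is the same, merely factored into two coupling steps instead of one. One small bookkeeping remark: in your first step you deduce $[0,1]^d\subset B(0,r_d(L-\log a))$ from the domain of $\tilde Y$, but the paper only requires $r_d(\cdot)$ to be non-decreasing, not maximal, so that inclusion is not literally guaranteed by \Cref{lem:exact}. What you actually establish is that the $(L-\log a)$-exact kernel is positive definite on $B(0,r_d(L)/a)\supset[0,1]^d$, which is all that \Cref{lem:partial} needs; so the argument is fine, but it is cleaner to phrase it that way than to invoke $r_d(L-\log a)$.
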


\begin{proof}
Suppose $a < 1$ and $[0,1]^d \subset B(0, r_d(L))$. We can always write
\begin{align*}
\EE[Y_L(x) Y_L(y)] = -\log \left| \frac{x-y}{a}\right| + L - \log a \qquad \forall x, y \in [0,a]^d,
\end{align*}

\noindent i.e. the field $(\widetilde{Y}_L(u))_{u \in [0,1]^d} := (Y_L(au))_{u \in [0,1]^d}$ has the same law as $(Y_L(u) + N_a)_{u \in [0,1]^d}$ where $N_a$ is an independent $N(0, -\log a)$ random variable, and hence
\begin{align*}
\int_{[0,a]^d} \mu_{L}(dx) \overset{d}{=} a^d e^{\sqrt{2d} N_a - d \EE[N_a^2]} \int_{[0,1]^d} \mu_{L}(dx).
\end{align*}

\noindent By \Cref{lem:partial} and \Cref{lem:aux},
\begin{align*}
\PP\left(\mu_L([0,a]^d) > t\right)
& = \PP\left( a^d e^{\sqrt{2d} N_a - d \EE[N_a^2]} \mu_L([0,1]^d) > t\right)\\
& \sim \frac{C_{L, 1, d} \EE\left[a^d e^{\sqrt{2d} N_a - d \EE[N_a^2]}\right]}{t}
= \frac{C_{L, 1,d} a^d}{t}
\end{align*}

\noindent which shows that $\overline{C}_{L, a, d} \propto a^d$. Similarly one can show that the proportionality constant does not depend on $L$ and we are done.
\end{proof}

\subsection{A reformulation and some reductions} \label{subsec:re}
Similar to the subcritical story, it is very useful to reformulate the tail asymptotics with the help of a Tauberian argument.
\begin{lem}\label{lem:reform}
Let $U \ge 0$ be a non-negative random variable. Then 
\begin{align}\label{eq:reform0}
\PP(U > t) \overset{t \to \infty}{\sim} \frac{C}{\sqrt{t}}
\qquad \Leftrightarrow \qquad
\lim_{\lambda \to 0^+} \lambda^{-1/2}\EE \left[1 - e^{-\lambda U}\right] = C \sqrt{\pi}.
\end{align}

\noindent In particular, the tail asymptotics \eqref{eq:main_result} is equivalent to
\begin{align}\label{eq:reform}
\lim_{\lambda \to 0^+} \lambda^{-1/2}\EE \left[1 - e^{-\lambda \mu_{f, g}(A)^2}\right] = d^{-1/2} \int_A g(v)dv.
\end{align}
\end{lem}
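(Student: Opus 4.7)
The plan is to deduce \eqref{eq:reform0} from Karamata's Tauberian theorem (\Cref{theo:tau}) by introducing the auxiliary measure $\mu(dt) := \PP(U > t)\, dt$ on $[0, \infty)$, which is $\sigma$-finite since $\PP(U > s) \le 1$. Fubini gives the elementary identity
\begin{align*}
\EE\left[1 - e^{-\lambda U}\right]
= \lambda \int_0^\infty e^{-\lambda t} \PP(U > t)\, dt
= \lambda\, \widetilde{\mu}(\lambda),
\end{align*}
so the Laplace-side condition $\lambda^{-1/2} \EE[1 - e^{-\lambda U}] \to C\sqrt{\pi}$ is equivalent to $\widetilde{\mu}(\lambda) \sim C\sqrt{\pi}\,\lambda^{-1/2}$ as $\lambda \to 0^+$. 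By \Cref{theo:tau} applied with $\rho = 1/2$, the latter is in turn equivalent to the integrated tail statement
\begin{align*}
\mu([0,t])
= \int_0^t \PP(U > s)\, ds
\overset{t \to \infty}{\sim} \frac{C\sqrt{\pi}}{\Gamma(3/2)}\, t^{1/2}
= 2C\sqrt{t}.
\end{align*}

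For the forward direction of \eqref{eq:reform0}, the assumption $\PP(U > t) \sim C/\sqrt{t}$ yields $\int_0^t \PP(U > s)\, ds \sim 2C\sqrt{t}$ by an elementary comparison (splitting at some large $T$ and sandwiching $\PP(U > s)$ between $(C \pm \varepsilon)/\sqrt{s}$ for $s > T$), and we conclude via the Tauberian equivalence above. For the converse direction, the Laplace estimate first gives $\int_0^t \PP(U > s)\, ds \sim 2C\sqrt{t}$ through the same Tauberian equivalence, and it then remains to upgrade this integrated statement to the pointwise tail $\PP(U > t) \sim C/\sqrt{t}$. The latter is the content of the monotone density theorem (a standard companion to Karamata's Tauberian theorem, see e.g.\ Feller, Vol.~II, XIII.5), whose monotonicity hypothesis is automatic for the non-increasing function $s \mapsto \PP(U > s)$; it yields $\PP(U > t) \sim 2C \cdot \tfrac{1}{2}\, t^{-1/2} = C t^{-1/2}$, as desired.

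The second claim \eqref{eq:reform} is then an immediate consequence of \eqref{eq:reform0} applied to $U := \mu_{f,g}(A)^2$: squaring the target tail $\PP(\mu_{f,g}(A) > t) \sim (\int_A g(v)\, dv)/(\sqrt{\pi d}\, t)$ translates it into $\PP(U > t) \sim C/\sqrt{t}$ with $C = (\int_A g(v)\, dv)/\sqrt{\pi d}$, and one readily verifies $C\sqrt{\pi} = d^{-1/2} \int_A g(v)\, dv$, matching the right-hand side of \eqref{eq:reform}. I do not anticipate any genuine obstacle here; the argument is a routine application of classical Tauberian machinery, with the only mildly non-trivial ingredient being the appeal to the monotone density theorem for the converse direction.
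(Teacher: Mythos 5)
Your proof is correct and follows the same overall strategy as the paper: both reduce the two-sided equivalence in \eqref{eq:reform0} to Karamata's Tauberian theorem applied to the measure $\PP(U>s)\,ds$, obtaining the equivalent statement $\int_0^t \PP(U>s)\,ds \sim 2C\sqrt{t}$, and then de-integrate. The only substantive difference is how the de-integration (from integrated tail to pointwise tail) is carried out in the converse direction. The paper does this by hand: after a slightly awkward reduction to the case where $\PP(U>t)$ is continuous, it bounds $h\,\PP(U>t)$ above and below by $\int_t^{t\pm h}\PP(U>s)\,ds$, plugs in the $\epsilon\sqrt{t}$-error form of the integrated asymptotics, optimises over $h$ (taking $h = \sqrt{\epsilon}\,t^{2/3}$), and lets $\epsilon\to0$. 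You instead invoke the monotone density theorem directly, whose monotonicity hypothesis is automatically satisfied by the non-increasing function $s\mapsto\PP(U>s)$; this sidesteps the paper's continuity reduction entirely and is the more standard and cleaner route. Both yield $\PP(U>t)\sim C t^{-1/2}$, and the specialisation to $U=\mu_{f,g}(A)^2$ matches. No gap.
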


\begin{proof}
The forward implication follows from a straightforward computation which is skipped here. Now recall that
\begin{align*}
\EE\left[\frac{1-e^{-\lambda U}}{\lambda} \right] = \int_0^\infty e^{-\lambda u} \PP(U > u) du.
\end{align*}

\noindent By \Cref{theo:tau},  the Laplace transform estimate in \eqref{eq:reform0} is equivalent to
\begin{align}\label{eq:tauapp1}
\int_0^t \PP(U > u) du 
\overset{t \to \infty}{\sim} \frac{C\sqrt{\pi}}{\Gamma(3/2)} \sqrt{t}
= 2C \sqrt{t}
\end{align}

For the purpose of evaluating tail asymptotics, we may assume without loss of generality that $\PP(U > t)$ is continuous in $t \ge 1$. In general $\PP(U>t)$ has at most countably many discontinuities, but for any $t_0 > 1$ we can always find $\epsilon \in (0, 1)$ such that the function is continuous at $t_0 \pm \epsilon$ and
\begin{align*}
\PP(U > t_0 - \epsilon) \ge \PP(U> t_0) \ge \PP(U > t_0 + \epsilon)
\end{align*}

\noindent by monotonicity. Under this reduction, the derivative of LHS of \eqref{eq:tauapp1} with respect to $t$ exists and is equal to $\PP(U > t)$, and the proof can be concluded if we can justify the differentiation of the asymptotics on the RHS of \eqref{eq:tauapp1}

For each $\epsilon > 0$, we have
\begin{align*}
\left|\int_0^t \PP(U > u) du - 2C\sqrt{t} \right| \le \epsilon \sqrt{t}
\end{align*}

\noindent for $t$ sufficiently large. Then
\begin{align*}
h \PP(U > t)
&\ge  \int_t^{t+h} \PP(U > u) du\\
& \ge (2C-\epsilon) \sqrt{t+h} - (2C+\epsilon) \sqrt{t}
\ge \frac{Ch}{\sqrt{t + h}} - 2 \epsilon \sqrt{t+h}.
\end{align*}

\noindent for any $h \in (0, t)$, and by choosing e.g. $h = \sqrt{\epsilon} t^{2/3}$, we see that
\begin{align*}
\liminf_{t \to \infty} \sqrt{t} \PP(U > t) \ge C.
\end{align*}

\noindent The bound in the other direction may also be obtained by considering the integral in the interval $[t-h, t]$, and we arrive at
\begin{align*}
\lim_{t \to \infty} \sqrt{t} \PP(U > t) = C.
\end{align*}
\end{proof}

\paragraph{Some reductions.} We discuss several reductions of our problem which will be taken for granted in our proof.\\

Given the continuity of $g \ge 0$ on $\overline{A}$, we may assume that 
\begin{center}
\textbf{R1}. the continuous density $g$ is bounded away from zero.
\end{center}

\noindent Suppose we assume that \Cref{theo:main} holds for this restricted set of density functions, then for general continuous functions $g_0 \ge 0$ on $\overline{A}$ we have
\begin{align*}
\limsup_{t \to \infty} t\PP\left( \int_{A} g_0(x) \mu_{f}(dx) > t \right)
& \le \lim_{t \to \infty} t\PP\left( \int_{A} (g_0(x)+\epsilon) \mu_{f}(dx) > t \right)\\
& \le \frac{\int_{A} (g_0(v)+\epsilon) dv}{\sqrt{\pi d}}
\end{align*}

\noindent for arbitrary $\epsilon > 0$. On the other hand, the Lebesgue measure coincides with the Jordan inner content for any open sets, and we can find some elementary set (i.e. union of finitely many rectangles, which is Jordan measurable) $A_\epsilon \subset A$ such that $g_0|_{\overline{A}_{\epsilon}} \ge \epsilon$ and $\int_{A \setminus A_{\epsilon}} g_0(x) \le \epsilon$. This gives
\begin{align*}
\liminf_{t \to \infty} t\PP\left( \int_{A} g_0(x) \mu_{f}(dx) > t \right)
& \ge \lim_{t \to \infty} t\PP\left( \int_{A_\epsilon} g_0(x) \mu_{f}(dx) > t \right)\\
& \ge \frac{\int_{A_\epsilon} g_0(v)dv}{\sqrt{\pi d}}
\ge \frac{\int_{A} g_0(v) dv - \epsilon}{\sqrt{\pi d}}
\end{align*}

\noindent and hence  $\lim_{t \to \infty} t\PP\left( \int_{A} g_0(x) \mu_{f}(dx) > t \right) = (\pi d)^{-1/2} \int_A g_0(v) dv$.\\

Note that we may also assume without loss of generality that
\begin{center}
\textbf{R2}.  the function $f$ appearing in \eqref{eq:cov} is lower bounded by any constant.
\end{center}

\noindent Indeed, we can always rewrite the covariance kernel
\begin{align*}
\EE[X(x) X(y)] 
= - \log |x-y| + f(x, y)
= - \log |rx - ry| + f\left(\frac{rx}{r}, \frac{ry}{r}\right) + \log r
\end{align*}

\noindent with any $r > 1$, and introduce a rescaled log-correlated field $X^r(\cdot)$ on $rD = \{rx: x \in D\}$ with covariance structure given by
\begin{align*}
\EE[X^r(u) X^r(v)] 
= - \log |u - v| + f^r(u, v), \qquad f^r(u, v) = f(r^{-1}u, r^{-1} v) + \log r.
\end{align*}

\noindent where $f^r(u, v) \ge \log r - ||f||_\infty$.

\subsection{The splitting lemma}\label{subsec:split}
We explain a further reduction which allows us to assume both \textbf{R2} and
\begin{center}
\textbf{R3}. $A$ is contained in a Euclidean ball of arbitrarily small radius.
\end{center}

\noindent This obviously does not follow from a rescaling argument since the rescaling required for \textbf{R2} and for \textbf{R3} are in opposite directions. We need the splitting lemma below.

\begin{lem}\label{lem:split}
Let $A = A_+ \cup A_-$ be a partition of $A$ by some hyperplane, i.e. there exists some $a \in \RR^d$ and $c \in \RR$ such that 
\begin{align*}
A_+ = \{x \in A: \langle a, x \rangle \ge c\},
\qquad A_- = \{x \in A: \langle a, x \rangle < c\}.
\end{align*}

\noindent Then, as $\lambda \to 0^+$, we have
\begin{align*}
\EE\left[ 1 - e^{- \lambda \mu_{f, g}(A)^2}\right]
= \EE\left[ 1 - e^{- \lambda \mu_{f, g}(A_+)^2}\right]
+ \EE\left[ 1 - e^{- \lambda \mu_{f, g}(A_-)^2}\right]
+ o(\lambda^{1/2}).
\end{align*}

\end{lem}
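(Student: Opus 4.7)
The plan is to set $M_{\pm}:=\mu_{f,g}(A_{\pm})$, so that $\mu_{f,g}(A)=M_{+}+M_{-}$ almost surely (the separating hyperplane has Lebesgue measure zero and is not charged by the critical GMC, by the same reasoning as in Footnote 2 of the introduction), and then to control the difference of Laplace exponents by a short algebraic identity combined with the cross moment estimate of \Cref{lem:cross}.

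A direct expansion with $a=\lambda M_+^2$, $b=\lambda M_-^2$ and $c=2\lambda M_+M_-$ yields
\begin{align*}
&\bigl(1-e^{-\lambda(M_++M_-)^2}\bigr)-\bigl(1-e^{-\lambda M_+^2}\bigr)-\bigl(1-e^{-\lambda M_-^2}\bigr)\\
&\qquad=-\bigl(1-e^{-\lambda M_+^2}\bigr)\bigl(1-e^{-\lambda M_-^2}\bigr)+e^{-\lambda(M_+^2+M_-^2)}\bigl(1-e^{-2\lambda M_+M_-}\bigr).
\end{align*}
Taking expectations, it remains only to show that each of the two remainder expectations is $o(\lambda^{1/2})$ as $\lambda\to 0^+$.

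Using the elementary inequality $1-e^{-x}\le x^{s}$, valid for every $x\ge 0$ and $s\in[0,1]$, with $s=h/2$ on the first line and $s=h$ on the second, one obtains the pointwise estimates
\begin{align*}
\bigl(1-e^{-\lambda M_+^2}\bigr)\bigl(1-e^{-\lambda M_-^2}\bigr)&\le \lambda^{h}M_+^{h}M_-^{h},\\
e^{-\lambda(M_+^2+M_-^2)}\bigl(1-e^{-2\lambda M_+M_-}\bigr)&\le 2^{h}\lambda^{h}M_+^{h}M_-^{h},
\end{align*}
valid for every $h\in[0,1]$. Since $A_+$ and $A_-$ are separated by a hyperplane, \Cref{lem:cross} supplies $\EE[M_+^{h}M_-^{h}]<\infty$ for every $h<\tfrac12+\tfrac{1}{2\sqrt d}$. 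Choosing any $h\in(\tfrac12,\tfrac12+\tfrac{1}{2\sqrt d})$, which is a non-empty subinterval of $(\tfrac12,1]$ for every $d\ge 1$, both remainders are then of order $O(\lambda^{h})=o(\lambda^{1/2})$, which concludes the argument.

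The decisive point, and the genuine obstacle, is the need to go past the exponent $\tfrac12$: a naive H\"older splitting of $\EE[M_+^{h}M_-^{h}]$ would require $\EE[M_\pm^{2h}]<\infty$ with $2h>1$, which fails because critical GMCs admit finite moments only strictly below one. Thus \Cref{lem:cross}, whose proof exploits the hyperplane separation of $A_\pm$ essentially, is the real input, and without it the quadratic cross term $2\lambda M_+M_-$ appearing in the expansion of $\lambda(M_++M_-)^2$ would be uncontrollable at the required scale $o(\lambda^{1/2})$.
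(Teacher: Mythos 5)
Your proposal is correct and follows essentially the same route as the paper: identify the cross term $2\lambda M_+M_-$ produced by squaring the sum, control it via \Cref{lem:cross}, and observe that the exponent $h$ from that lemma can be taken strictly above $\tfrac12$. The only cosmetic difference is that you write an exact algebraic identity for the discrepancy between $1-e^{-\lambda(M_++M_-)^2}$ and the two one-sided terms, whereas the paper records a two-sided inequality bounding the discrepancy by $\pm\bigl(1-e^{-2\lambda M_+M_-}\bigr)$; either way, the pointwise bound $1-e^{-x}\le x^s$ (you use it directly, the paper phrases it via Markov plus its Tauberian \Cref{lem:reform}) converts $\EE[M_+^hM_-^h]<\infty$ into the $O(\lambda^h)=o(\lambda^{1/2})$ error. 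Your closing remark on why a naive H\"older split fails is accurate and is precisely the reason \Cref{lem:cross} is the decisive ingredient.
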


\begin{proof}
We make use of the elementary inequality
\begin{align*}
1-e^{-(x+y)^2}
\begin{array}{l}
\ge \left(1-e^{-x^2} \right) + \left(1-e^{-x^2} \right) - \left(1-e^{-2xy} \right)\\
\le \left(1-e^{-x^2} \right)  + \left(1-e^{-x^2} \right) + \left(1-e^{-2xy} \right)
\end{array}
\qquad \forall x, y \ge 0.
\end{align*}

\noindent This means that
\begin{align*}
\EE\left[ 1 - e^{- \lambda \mu_{f, g}(A)^2}\right]
& = \EE\left[ 1 - e^{- \lambda \mu_{f, g}(A_+)^2}\right]
+ \EE\left[ 1 - e^{- \lambda \mu_{f, g}(A_-)^2}\right] \\
& \qquad + O \left(\EE\left[ 1 - e^{- 2\lambda \mu_{f, g}(A_+)\mu_{f, g}(A_-)}\right] \right) , \qquad \lambda \to 0^+.
\end{align*}

However, \Cref{lem:cross} suggests the existence of some $h > \frac{1}{2}$ such that 
\begin{align*}
\EE \left[\mu_{f, g}(A_+)^h \mu_{f, g}(A_-)^h\right] < \infty
\end{align*}

\noindent which implies $\PP(\mu_{f, g}(A_+)\mu_{f, g}(A_-) > t) = o(t^{-h})$ by Markov's inequality. A direct computation (or by \Cref{lem:reform}) then shows that $\EE\left[ 1 - e^{- 2\lambda \mu_{f, g}(A_+)\mu_{f, g}(A_-)}\right] = o(\lambda^{1/2})$ and we are done.
\end{proof}

The splitting lemma, while simple and based on \Cref{lem:cross}, was not known in \cite{BKNSW2015} and its power could not have been harnessed. For an illustration, we have

\begin{cor}\label{cor:partial}
Let $A =[0,a]^d \subset B(0, r_d(L))$, and $\mu_{L, g}(dx) = g(x) \mu_L(dx)$ be the critical GMC associated with the $L$-exact field and continuous density $g \ge 0$ on $A$. Then there exists some constant $\overline{C}_{d} > 0$ independent of $a$ such that
\begin{align}\label{eq:partial2a}
\PP\left(\mu_{L, g}(A) > t \right) \overset{t \to \infty}{\sim} \frac{\overline{C}_{d} \int_A g(v) dv}{t},
\end{align}

\noindent or equivalently
\begin{align}\label{eq:partial2b}
\lim_{\lambda \to 0^+} \EE \left[\frac{1- e^{-\lambda \mu_{L, g}(A)^2}}{\lambda^{1/2}}\right] =\overline{C}_d \sqrt{\pi} \int_A g(v) dv.
\end{align}
\end{cor}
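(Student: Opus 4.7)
The strategy is to establish the Laplace-transform reformulation \eqref{eq:partial2b} and then read off \eqref{eq:partial2a} from the Tauberian equivalence of \Cref{lem:reform}. First I would partition $A=[0,a]^d$ into a regular grid of $n^d$ sub-cubes $A_1,\ldots,A_{n^d}$ of side $\delta=a/n$ via successive hyperplane cuts. Iterated application of \Cref{lem:split} (only finitely many splits, each contributing $o(\lambda^{1/2})$) then gives, for any fixed $n$,
\begin{align*}
\EE\bigl[1-e^{-\lambda\mu_{L,g}(A)^2}\bigr] = \sum_{i=1}^{n^d} \EE\bigl[1-e^{-\lambda\mu_{L,g}(A_i)^2}\bigr] + o(\lambda^{1/2}) \qquad \text{as } \lambda\to 0^+.
\end{align*}

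Next I would sandwich $g$ on each $A_i$ by its extrema $g_i^{+}:=\max_{A_i} g$, $g_i^{-}:=\min_{A_i} g$ to get $g_i^{-}\mu_L(A_i)\le \mu_{L,g}(A_i)\le g_i^{+}\mu_L(A_i)$. Translation invariance of the $L$-exact kernel gives $\mu_L(A_i)\stackrel{d}{=}\mu_L([0,\delta]^d)$, so \Cref{lem:partial} combined with \Cref{cor:pscale} yields $\PP(\mu_L(A_i)>t)\sim \overline{C}_d\delta^d/t$; trivially rescaling by the constant $g_i^{\pm}$ then gives $\PP(g_i^{\pm}\mu_L(A_i)>t)\sim \overline{C}_d g_i^{\pm}\delta^d/t$. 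Applying the forward direction of \Cref{lem:reform} to $U=(g_i^{\pm})^2\mu_L(A_i)^2$ and summing over $i$,
\begin{align*}
\overline{C}_d\sqrt{\pi}\sum_i g_i^{-}\delta^d \le \liminf_{\lambda\to 0^+}\frac{\EE[1-e^{-\lambda\mu_{L,g}(A)^2}]}{\sqrt{\lambda}} \le \limsup_{\lambda\to 0^+}\frac{\EE[1-e^{-\lambda\mu_{L,g}(A)^2}]}{\sqrt{\lambda}} \le \overline{C}_d\sqrt{\pi}\sum_i g_i^{+}\delta^d.
\end{align*}

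Finally, sending $n\to\infty$ with $\delta=a/n\to 0$, uniform continuity of $g$ on $\overline{A}$ forces both Riemann sums $\sum_i g_i^{\pm}\delta^d$ to converge to $\int_A g(v)\,dv$, establishing \eqref{eq:partial2b}; the backward direction of \Cref{lem:reform} then delivers \eqref{eq:partial2a}. I do not anticipate any substantial obstacle here, since every quantitative input (partial tail, scaling, Tauberian equivalence) has already been prepared. The only subtlety is the order of limits: the splitting-lemma error $o(\lambda^{1/2})$ must be absorbed for each fixed partition (take $\lambda\to 0^+$ first) before one refines the mesh ($n\to\infty$ second). Independence of the leading coefficient from $a$ is automatic from this derivation, being inherited from \Cref{cor:pscale}.
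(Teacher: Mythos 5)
Your proposal is correct and follows essentially the same route as the paper's proof: partition $[0,a]^d$ into small sub-cubes by hyperplane cuts, absorb the splitting errors via \Cref{lem:split}, and use translation invariance together with \Cref{lem:partial} and \Cref{cor:pscale} to obtain the per-cube asymptotics, then pass to the Laplace-transform form and refine the mesh. The only cosmetic difference is that you sandwich $g$ additively by its extrema $g_i^\pm$ on each cube and take a Riemann-sum limit, whereas the paper (having invoked reduction \textbf{R1} so that $g$ is bounded away from zero) uses a multiplicative $(1\pm\epsilon)$ sandwich; your additive version is marginally cleaner here since it avoids needing \textbf{R1} for this particular step, but the two arguments are otherwise interchangeable and you correctly identify the crucial order-of-limits point ($\lambda\to 0^+$ before $n\to\infty$).
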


\begin{proof}
We now further assume \textbf{R1}, i.e. $g(x)$ is a continuous function on $A$ and bounded away from zero. Note that $g$ is also uniformly continuous on $A$ by compactness. We can therefore consider a partition $A = \cup_i B_i$ into finitely many disjoint $d$-cubes $B_i$ of same length such that $g(\cdot)$ does not fluctuate multiplicatively by more than $\epsilon > 0$ on each of them, i.e.
\begin{align*}
\sup_{v \in B_i} g(v) \le (1+\epsilon) g(x) \le (1+\epsilon)^2 \inf_{v \in B_i} g(v) \qquad \forall x \in B_i.
\end{align*}

Writing $|B_i|$ for the Lebesgue measure of the cube $B_i$, we recall \Cref{lem:partial} and \Cref{cor:pscale} which say that there exists some $\overline{C}_d > 0$ such that
\begin{align*}
\PP\left(\mu_L(B_i) > t\right) \overset{t \to \infty}{\sim} \frac{\overline{C}_d |B_i|}{t}
\qquad \text{or} \qquad 
\lim_{\lambda \to 0^+} \EE \left[\frac{1- e^{-\lambda \mu_{L}(B_i)^2}}{\lambda^{1/2}}\right] = \overline{C}_d |B_i|\sqrt{\pi}
\end{align*}

\noindent (the equivalence comes from \Cref{lem:reform}) for all $i$. By \Cref{lem:split},
\begin{align*}
\limsup_{\lambda \to 0^+}  \EE \left[\frac{1- e^{-\lambda \mu_{L, g}(A)^2}}{\lambda^{1/2}}\right]
& \le \sum_i \limsup_{\lambda \to 0^+}  \EE \left[\frac{1- e^{-\lambda \mu_{L, g}(B_i)^2}}{\lambda^{1/2}}\right]\\
& \le \sum_i\limsup_{\lambda \to 0^+}  \EE \left[\frac{1- e^{-\lambda (\sup_{v \in B_i} g(v))^2 \mu_{L}(B_i)^2}}{\lambda^{1/2}}\right] \\
& =  \sum_i \overline{C}_{d} |B_i|   \left(\sup_{v \in B_i} g(v)\right) \sqrt{\pi}
\le  (1+\epsilon) \overline{C}_{d} \sqrt{\pi} \int_{A} g(v)dv.
\end{align*}

\noindent The inequality in the other direction
\begin{align*}
\liminf_{\lambda \to 0^+}  \EE \left[\frac{1- e^{-\lambda \mu_{L, g}(A)^2}}{\lambda^{1/2}}\right]
\ge (1+\epsilon)^{-1} \overline{C}_{d}  \sqrt{\pi} \int_{A} g(v)dv
\end{align*}

\noindent may be obtained similarly and this concludes the proof.
\end{proof}

\subsection{Universality of tail profile} \label{subsec:universal}
The goal of this subsection is to establish \Cref{theo:main}, modulo the identification of a proportionality constant.
\begin{lem}\label{lem:universal}
Under the setting of \Cref{theo:main}, we have
\begin{align}\label{eq:lap_universal}
\lim_{\lambda \to 0^+}  \EE \left[\frac{1- e^{-\lambda \mu_{f, g}(A)^2}}{\lambda^{1/2}}\right] =  \overline{C}_d \sqrt{\pi} \int_A g(v) dv,
\end{align}

\noindent or equivalently
\begin{align*}
\PP\left(\mu_{f, g}(A) > t \right) \overset{t \to \infty}{\sim} \frac{\overline{C}_{d} \int_A g(v) dv}{t}
\end{align*}

\noindent where $\overline{C}_d$ is the constant in \Cref{cor:partial}.
\end{lem}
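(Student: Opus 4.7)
The plan is to reduce the general statement to the partial result Corollary~\ref{cor:partial} via three ingredients: (i) partitioning $A$ into small pieces on which $f$ and $g$ are almost constant, (ii) applying Kahane's Gaussian interpolation on each piece to an exact-field chaos, and (iii) using interior/exterior cube approximations to handle general Jordan measurable sets. As a preliminary I will invoke reductions R1--R2 to assume $g\geq c_{0}>0$ on $\overline{A}$ and $f\geq M$ on $D\times D$ for an arbitrarily large constant $M$; choosing $M$ so that $r_{d}(M)>\operatorname{diam}(A)+1$ ensures the translated $L$-exact field of Lemma~\ref{lem:exact} is defined on a ball containing $A$ for every $L\geq M$.

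\paragraph{Step 1 (Result for unions of cubes).} I will first prove \eqref{eq:lap_universal} when $A=\bigsqcup_{i=1}^{N}C_{i}$ is a finite disjoint union of half-open dyadic cubes of common side-length $\delta=\delta(\epsilon)$, chosen small enough that $f$ and $\log g$ each oscillate by at most $\epsilon$ on every $C_{i}$. Since adjacent cubes are hyperplane-separated, iterating Lemma~\ref{lem:split} yields
$$\EE\!\left[1-e^{-\lambda\mu_{f,g}(A)^{2}}\right] = \sum_{i=1}^{N} \EE\!\left[1-e^{-\lambda\mu_{f,g}(C_{i})^{2}}\right] + o(\lambda^{1/2}), \qquad \lambda\to 0^{+}.$$
On each $C_{i}$ I would pick $x_{i}\in C_{i}$, set $L_{i}:=f(x_{i},x_{i})\geq M$, and compare with the $L_{i}$-exact chaos via Corollary~\ref{cor:interpolate} applied to the mollified fields $X_{\delta'}$ and $(Y_{L_{i}})_{\delta'}$ (the decomposition \eqref{eq:f_dec} is used to realise both as honest continuous Gaussian processes, as in the standard GMC mollification argument), with $F(x)=1-e^{-\lambda x^{2}}$. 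A direct check, writing $u=\lambda x^{2}$, shows
$$x^{2}|F''(x)| = |2u - 4u^{2}|\, e^{-u}, \qquad F(x) = 1-e^{-u},$$
and the ratio $x^2|F''(x)|/F(x)$ is bounded by a universal constant $C_0$ on $u\geq 0$, independently of $\lambda$. Since the covariance difference on $C_{i}$ is bounded by $2\epsilon+o_{\delta'}(1)$, passing $\delta'\to 0^{+}$ in the standard way and combining with Corollary~\ref{cor:partial} gives
$$e^{-C_{0}\epsilon}\,\overline{C}_{d}\sqrt{\pi}\!\int_{A}g\;\leq\;\liminf_{\lambda\to 0^{+}}\lambda^{-1/2}\EE\!\left[1-e^{-\lambda\mu_{f,g}(A)^{2}}\right]\;\leq\;\limsup_{\lambda\to 0^{+}}(\cdots)\;\leq\;e^{C_{0}\epsilon}\,\overline{C}_{d}\sqrt{\pi}\!\int_{A}g,$$
and letting $\epsilon\to 0^{+}$ will give \eqref{eq:lap_universal} in this case.

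\paragraph{Step 2 (Jordan measurability and conclusion).} For general Jordan measurable open $A$, I would sandwich $A_{\delta}\subset A\subset A^{\delta}\subset D$ between interior and exterior finite dyadic-cube unions with $|A^{\delta}\setminus A_{\delta}|\to 0$ as $\delta\to 0^{+}$. Monotonicity of $\mu_{f,g}$ then gives
$$\EE\!\left[1-e^{-\lambda\mu_{f,g}(A_{\delta})^{2}}\right]\;\leq\;\EE\!\left[1-e^{-\lambda\mu_{f,g}(A)^{2}}\right]\;\leq\;\EE\!\left[1-e^{-\lambda\mu_{f,g}(A^{\delta})^{2}}\right],$$
and applying Step 1 to the two extremes before sending $\delta\to 0^{+}$ yields \eqref{eq:lap_universal} in general. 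The equivalent tail statement $\PP(\mu_{f,g}(A)>t)\sim \overline{C}_{d}t^{-1}\int_{A}g(v)\,dv$ then follows from Lemma~\ref{lem:reform}.

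\paragraph{Main obstacle.} The hardest step will be the Gaussian comparison in Step 1: since $F(x)=1-e^{-\lambda x^{2}}$ is not globally convex, Kahane's sign-free comparison \eqref{eq:Gcomp} is unavailable and one must invoke the finer Corollary~\ref{cor:interpolate}. What makes the argument work uniformly as $\lambda\to 0^{+}$ is the pointwise bound $x^{2}|F''(x)|\leq C_{0}F(x)$ with $C_{0}$ independent of $\lambda$, which converts a small covariance oscillation $\epsilon$ into a multiplicative distortion $e^{\pm C_{0}\epsilon}$ on the Laplace transform that is harmless in the $\lambda\to 0^{+}$ limit. Without this favourable inequality the $\lambda^{-1/2}$-renormalised asymptotics obtained for the exact field would not transfer to the general field, and the universal constant $\overline{C}_{d}$ would be lost.
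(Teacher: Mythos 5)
Your proposal is correct and follows the same overall strategy as the paper (split into cubes via Lemma~\ref{lem:split}, freeze $f$ and $g$ locally, then interpolate to an exact-kernel chaos via Corollary~\ref{cor:interpolate} and invoke Corollary~\ref{cor:partial}), but it achieves the key interpolation bound by a genuinely cleaner route. The paper only records the weak pointwise estimate $x^{2}|F''(x)|\le e^{-\lambda x^{2}}(1+8\lambda^{2}x^{4})$, which does \emph{not} vanish as $\lambda x^{2}\to 0$, and so it must invoke the two-sided tail estimates of Lemma~\ref{lem:tailbound} for the interpolated fields $\mu_{B,s}$ in order to control the ratio $\EE[W_s^2|F''(W_s)|]/\EE[F(W_s)]$. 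You observed instead that, writing $u=\lambda x^{2}$, one has the \emph{exact identity} $x^{2}F''(x)=(2u-4u^{2})e^{-u}$, whose numerator vanishes at $u=0$, and hence the pointwise bound $x^{2}|F''(x)|\le C_{0}F(x)$ holds with a constant $C_{0}$ independent of $\lambda$; taking expectations then gives the ratio bound with no distributional information about $W_s$ whatsoever. This removes the need for Lemma~\ref{lem:tailbound} in the argument altogether. Your Step~2 is also slightly streamlined: the paper uses only an inner elementary approximation $A_{\delta}\subset A$ and then bounds the residual $B_{\delta}=A\setminus\overline{A}_{\delta}$ by a second splitting argument, whereas you sandwich $A_{\delta}\subset A\subset A^{\delta}$ directly and use monotonicity of $\mu_{f,g}$. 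Two small caveats worth flagging for the outer sandwich: (i) $g$ is only given on $\overline{A}$, so you must first extend it continuously (e.g.\ via Tietze) to $\overline{A^{\delta}}$ before $\mu_{f,g}(A^{\delta})$ makes sense, and (ii) the outer cube union must satisfy $A^{\delta}\subset D$, which implicitly requires $\overline{A}$ to be compactly contained in $D$; the paper's inner-only approach avoids the latter, though it has an analogous constraint when covering $B_{\delta}$ by an elementary set.
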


Before we proceed to the proof, let us prepare ourselves by introducing some new notations and collecting some auxiliary lemmas below.

Let $B \subset D$ be an open $d$-cube with centre $v \in B$ and of length less than $r_d / d = r_d(0) / d$ (in particular it is contained in the Euclidean ball $B(v, r_d)$). We define for each $s \in [0,1]$ a new log-correlated Gaussian field 
\begin{align}\label{eq:inter_field}
Z_{B, s}(x) = \sqrt{s}X(x) + \sqrt{1-s} Y_{f(v, v)}(x), \qquad x \in B
\end{align} 

\noindent where $Y_{f(v, v)}$ is an $f(v, v)$-exact field\footnote{This is defined on $B(v, r_d)$ so long as $f(v, v) \ge 0$ for any $v$, which is justified by reduction \textbf{R2}.} independent of $X$. We abuse the notation and denote by $\mu_{B, s}(d\cdot)$ the critical GMC associated to $Z_{B, s}(\cdot)$.
\begin{lem}\label{lem:tailbound}
There exists some $C > 0$ independent of $B \subset D$ and $s \in [0,1]$, and $t_0 = t_0(B)$ possibly depending on the length of $B$, such that
\begin{align*}
& \PP(\mu_{B, s}(B) > t) \le \frac{C |B|}{t} \qquad  \forall t > 0\\
\text{and } \qquad & \PP(\mu_{B, s}(B) > t) \ge \frac{|B|}{Ct} \qquad \forall t > t_0.
\end{align*}
\end{lem}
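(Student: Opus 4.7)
The plan is to use Kahane's interpolation formula (\Cref{cor:interpolate}) to reduce both bounds to estimates on the chaos $\mu_{B,0}$ of the reference $f(v,v)$-exact field $Y_{f(v,v)}$, whose tail asymptotics are already controlled by \Cref{cor:partial} and \Cref{cor:pscale}. By construction $Z_{B,s}$ is precisely the Kahane interpolant with $Z_{B,0} = Y_{f(v,v)}$ and $Z_{B,1} = X$; reduction \textbf{R2} ensures $f(v,v) \ge 0$ so that $Y_{f(v,v)}$ is defined on $B$, and the covariance difference $f(x,y) - f(v,v)$ is uniformly bounded by $2\|f\|_\infty$.

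Apply \Cref{cor:interpolate} with $F(x) = 1 - e^{-\lambda x^2}$, which is non-negative and bounded. The substitution $u = \lambda x^2$ reduces $x^2|F''(x)|/F(x)$ to $\phi(u) := 2u|1-2u|/(e^u - 1)$, which is continuous on $[0, \infty)$ with $\phi(0^+) = 2$ and $\phi(u) \to 0$ as $u \to \infty$, hence bounded by some universal constant $M$ independent of $\lambda$. After applying the corollary to mollified fields and passing to the Seneta--Heyde limit, this yields uniformly in $s \in [0,1]$ and $\lambda > 0$
\[
e^{-\|f\|_\infty M} \le \frac{\EE\bigl[1-e^{-\lambda\mu_{B,s}(B)^2}\bigr]}{\EE\bigl[1-e^{-\lambda\mu_{B,0}(B)^2}\bigr]} \le e^{\|f\|_\infty M}.
\]

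For the upper bound, \Cref{cor:partial} and \Cref{cor:pscale} together with the trivial $\PP \le 1$ give $\PP(\mu_{B,0}(B) > t) \le C_0|B|/t$ uniformly in $t > 0$ and $v$. Direct integration then yields $\EE[1-e^{-\lambda \mu_{B,0}(B)^2}] \le C_1|B|^2\lambda + C_2|B|\sqrt{\lambda}$ for all $\lambda > 0$; combining with the Kahane bound and the Chebyshev-type inequality $\PP(\mu_{B,s}(B) > t) \le \EE[1-e^{-\mu_{B,s}(B)^2/t^2}]/(1-e^{-1})$ yields $\PP(\mu_{B,s}(B) > t) \le C(|B|/t + |B|^2/t^2)$. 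This is bounded by $C|B|/t$ for $t \ge |B|$, while $\PP \le 1 \le |B|/t$ handles the range $t < |B|$.

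For the lower bound, the reverse direction of the Kahane sandwich combined with \Cref{lem:reform} applied to the exact-field asymptotic yields $\liminf_{\lambda \to 0^+}\EE[1-e^{-\lambda\mu_{B,s}(B)^2}]/\sqrt{\lambda} \ge c_-\overline{C}_d|B|\sqrt{\pi}$ with $c_- = e^{-\|f\|_\infty M}$ independent of $B$ and $s$. By a one-sided version of Karamata's Tauberian theorem (valid since $F(t) := \int_0^t \PP(\mu_{B,s}(B)^2 > u)\,du$ is non-decreasing), this transfers to $F(t) \ge 2c_-\overline{C}_d|B|\sqrt{t}$ for $t > t_0(B)$; combined with the analogous upper bound $F(t) \le 2c_+\overline{C}_d|B|\sqrt{t}$ and the monotonicity of $u \mapsto \PP(\mu_{B,s}(B)^2 > u)$, the finite-difference inequality $\PP(\mu_{B,s}(B)^2 > t) \ge [F((1+\epsilon)t) - F(t)]/(\epsilon t)$, with $\epsilon$ fixed large enough that $c_-\sqrt{1+\epsilon} > c_+$ (a condition depending only on the universal quantities $\|f\|_\infty$ and $M$), extracts the desired lower bound $\PP(\mu_{B,s}(B) > t) \ge |B|/(Ct)$ for $t > t_0(B)$. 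The main obstacle is precisely this lower-bound direction: the one-sided Tauberian step requires tracking constants from both sides of the Kahane sandwich to preserve $B$-uniformity, which is in turn secured by the exact-field scaling in \Cref{cor:pscale}.
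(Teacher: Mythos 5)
Your proposal takes a genuinely different route from the paper. The paper's proof is \emph{pathwise}: it exploits the decomposition $f=f_+-f_-$ to write $Z_{B,s}(\cdot) + \sqrt{s}\,G_-(\cdot) \overset{d}{=} Y_0(\cdot) + \sqrt{s}\,G_+(\cdot) + N_{s,v}$, which realises $\mu_{B,s}$ almost surely as the chaos of the exact field weighted by continuous Gaussian factors, and then applies \Cref{lem:aux} on tails of products --- both the upper and lower bounds come out of the same algebraic manipulation. You instead compare the Laplace transforms $\EE[1-e^{-\lambda\mu_{B,\cdot}(B)^2}]$ via Kahane interpolation and then convert back via a Tauberian argument. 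Your observation that $x^2|F''(x)|/F(x)=2u|1-2u|/(e^u-1)$ at $u=\lambda x^2$ is uniformly bounded is a nice one; it would in fact short-circuit the paper's own verification of \eqref{eq:interbound} in \Cref{lem:universal}, which the paper derives from the conclusion of the present lemma. Your upper bound argument (Kahane sandwich plus the Chebyshev-type inequality) is correct, modulo the routine use of exact-field scaling to get the uniform-in-$t$ bound for $\mu_{B,0}(B)$, which you allude to.

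For the lower bound there is a genuine gap. You invoke ``a one-sided version of Karamata's Tauberian theorem'' to convert $\liminf_{\lambda\to0^+}\lambda^{1/2}\widetilde F(\lambda)\ge c_-\overline C_d\sqrt{\pi}|B|$ into $F(t)\ge 2c_-\overline C_d|B|\sqrt t$ eventually, with $F(t):=\int_0^t\PP(\mu_{B,s}(B)^2>u)\,du$. \Cref{theo:tau} is a two-sided equivalence of genuine limits; there is no one-sided Tauberian transfer that preserves the constant $1/\Gamma(1+\rho)$ when only a $\liminf/\limsup$ sandwich is available. Moreover, the lower-bound direction is not of Abelian type: one needs the upper Laplace bound to control the tail of $\widetilde F(\lambda)$, which introduces a subtractive term, and with the inherent loss factors (e.g.\ the $\sqrt{2e}$ from optimising the Markov inequality $F(t)\le e^{\lambda t}\widetilde F(\lambda)$, and the Kahane gap $c_+/c_-=e^{2\|f\|_\infty M}$, which can be arbitrarily large) the claimed constant is not recoverable. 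The strategy can be salvaged --- for example, split $\widetilde F(\lambda)\le F(T)+\PP(U>T)\,e^{-\lambda T}/\lambda$, set $\lambda=c/T$ with $c$ small enough that $c_-c^{-1/2}>\sqrt{2e}\,c_+$, and combine with the Abelian upper bound on $F$ --- but as written the intermediate claims $F(t)\gtrless 2c_\pm\overline C_d|B|\sqrt t$ are unjustified, and the finite-difference step (which requires $c_-\sqrt{1+\epsilon}-c_+>0$ with these specific constants) inherits the problem. This is precisely the difficulty the paper's pathwise argument avoids, which is why the paper's route is the more robust one for this two-sided lemma.
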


\begin{proof}
We only prove the upper bound here since the lower bound is similar.

If $\mu_0(d\cdot)$ is the critical GMC associated to the exact field $Y_0$, we have by \Cref{cor:partial} that $\PP(\mu_0(B) > t) \sim \overline{C}_d |B| / t$, and so there exists some $C > 0$ such that 
\begin{align} \label{eq:upper_exact}
\PP\left(\mu_0(B) > t\right) \le \frac{C}{t} \qquad \forall t > 0
\end{align}

\noindent for some cube $B$ of fixed size contained in a ball of radius $r_d = r_d(0)$.

Let $G_-(x)$ be an independent Gaussian field with covariance kernel $f_-(x, y)$ on $\overline{D}$. By inspecting the covariance structure of $Z_{B, s}(\cdot)$
\begin{align*}
\EE\left[Z_{B, s}(x)Z_{B, s}(y)\right] = -\log |x-y| + sf_+(x, y) - sf_-(x, y) + (1-s) f(v, v),
\end{align*}

\noindent we see that 
\begin{align*}
Z_{B, s}(\cdot) + \sqrt{s} G_-(\cdot) \overset{d}{=} Y_0(\cdot) + \sqrt{s}G_+(\cdot) + N_{s, v} \qquad \text{on } B
\end{align*} 

\noindent where $Y_0(\cdot)$ is an exact log-correlated Gaussian field, $G_+(\cdot)$ is an independent Gaussian field with covariance $f_+(x, y)$, and $N_{s, v}$ is an independent Gaussian random variable with variance $(1-s)f(v, v)$ (where $f(v,v)$ is assumed to be non-negative by reduction \textbf{R2}). 

Now fix some $a > 0$ such that $\PP(\min_{x \in D} e^{\sqrt{2ds} G_-(x) - ds \EE[G_-(x)^2]} > a) \ge C_a$ for some $C_a > 0$ bounded away from zero uniformly in $s \in [0, 1]$, which is possible by \Cref{lem:ctsGP}. Then for any $c \in (0, 1)$ we have
\begin{align*}
\PP(\mu_{B, s}(cB) > t) 
&\le \frac{1}{C_a} \PP\left(\min_{x \in D} e^{\sqrt{2ds} G_-(x) - ds \EE[G_-(x)^2]}\mu_{B, s}(cB) > at\right)\\
&\le \frac{1}{C_a} \PP\left(\max_{x \in D} e^{\sqrt{2ds}  G_+(x) - ds \EE[G_+(x)^2]}e^{\sqrt{2d} N_{s, v}- d\EE[N_{s, v}^2]} \mu_{0}(cB) > at\right),
\end{align*}

\noindent and we may further rewrite $\mu_0(cB) \overset{d}{=} c^d e^{\sqrt{2d}N_c - d \EE[N_c^2]} \mu_0(B)$ where $N_c$ is an independent $N(0, - \log c)$ variable, using the scaling property of the exact field $Y_0$. From \Cref{lem:ctsGP} we know that e.g. the second moment of the random variable
\begin{align*}
\max_{x \in D} e^{\sqrt{2ds} G_+(x) - ds \EE[G_+(x)^2]}e^{\sqrt{2d} N_{s, v}- d\EE[N_{s, v}^2]}c^d e^{\sqrt{2d}N_c - d \EE[N_c^2]}
\end{align*}

\noindent may be bounded uniformly in $s \in [0, 1]$ and $v \in D$, and by \eqref{eq:upper_exact} and \Cref{lem:aux} we conclude that
\begin{align*}
\PP(\mu_{B, s}(cB) > t) 
& \le \frac{C \EE \left[\max_{x \in D} e^{\sqrt{2ds} G_+(x) - ds \EE[G_+(x)^2]}e^{\sqrt{2d} N_{s, v}- d\EE[N_{s, v}^2]}c^d e^{\sqrt{2d}N_c - d \EE[N_c^2]}\right]}{C_a a t}\\
& \le \frac{C_a' |cB|}{t} \qquad \forall t > 0.
\end{align*}
\end{proof}

\begin{proof}[Proof of \Cref{lem:universal}]
Let us start by assuming that $g \equiv 1$. Recall that the Lebesgue measure of an open set $A$ coincides with its inner Jordan measure. In other words, for each $\delta > 0$ there exists some elementary set $A_\delta = \cup_i A_{\delta, i}$ formed by finite union of disjoint cubes $A_{\delta, i}$ such that
\begin{align*}
A_\delta \subset A \qquad \text{and} \qquad |A \setminus A_\delta| \le \delta
\end{align*}

\noindent If we write $B_\delta = A \setminus \overline A_\delta$, then \Cref{lem:split} says
\begin{align*}
\EE \left[\frac{1- e^{-\lambda \mu_{f}(A)^2}}{\lambda^{1/2}}\right]
& = \EE \left[\frac{1- e^{-\lambda \mu_{f}(A_\delta)^2}}{\lambda^{1/2}}\right]
+ \EE \left[\frac{1- e^{-\lambda \mu_{f}(B_\delta)^2}}{\lambda^{1/2}}\right]\\
& \qquad + O\left( \EE \left[\frac{1- e^{-2\lambda \mu_{f}(A_\delta)\mu_{f}(B_\delta)}}{\lambda^{1/2}}\right]\right)
\end{align*}

\noindent where the last term is $o(1)$ as $\lambda \to 0^+$ and may be safely ignored\footnote{\Cref{lem:split} requires our sets to be separated by hyperplanes which is not immediately satisfied here, but we may always subdivide $B_\delta$ into finitely many smaller sets so that each subdivision is separated from $A_\delta$ by a hyperplane.}.

We first treat $A_\delta = \cup_i A_{\delta, i}$, and may assume that the cubes $A_{\delta, i}$ are small enough (by further subdivision) such that the function $f$ appearing in the covariance \eqref{eq:cov} does not fluctuate by more than $\delta$ on each of them, i.e.
\begin{align*}
|f(x_1, y_1) - f(x_2, y_2)| \le \delta
\qquad \forall x_1, y_1, x_2, y_2 \in A_{\delta, i}, \qquad \forall i.
\end{align*}

Let $v_i \in A_{\delta, i}$ be the centre of the cube. We introduce the interpolation field $Z_{A_{\delta, i}, s}$ as in \eqref{eq:inter_field} and the critical GMC $\mu_{A_{\delta, i}, s}(d\cdot)$ accordingly. By \Cref{cor:interpolate}
\begin{align*}
\exp\left(-\delta \int_0^1 \frac{\EE \left[ (W_s)^2 |F''(W_s)|\right]}{\EE \left[F(W_s)\right]} ds\right)
\le \frac{\EE \left[1- e^{-\lambda W_1^2} \right]}{\EE \left[1- e^{-\lambda W_0^2} \right]}
\le \exp\left(\delta \int_0^1 \frac{\EE \left[ (W_s)^2 |F''(W_s)|\right]}{\EE \left[F(W_s)\right]} ds\right)
\end{align*}

\noindent where 
\begin{align*}
F(x) = 1 - e^{-\lambda x^2},
\qquad x^2|F''(x)| \le e^{-\lambda x^2}\left(1 + 8 \lambda^2 x^4 \right)
\end{align*}

\noindent are both bounded continuous functions in $x$, and $W_s = \mu_{A_{\delta, i}, s}(A_{\delta, i})$. Using the two-sided bounds from \Cref{lem:tailbound}, it is a straightforward computation to show that there exists some $C>0$ independent of $s \in [0,1]$ and $A_{\delta, i}$ such that for $\lambda$ sufficiently large\footnote{say $\lambda > \lambda_0$, where $\lambda_0$ depends on $t_0(A_{\delta, i})$ from \Cref{lem:tailbound}. For each $\delta > 0$ since the number of cubes $A_{\delta, i}$ we deal with is finite, $\lambda_0 = \lambda_0(\delta)$ may be taken uniformly for all $i$. While the bound \eqref{eq:interbound} depends on $\delta$ through $\lambda > \lambda_0(\delta)$, the constant $C>0$ on the RHS is independent of $\delta$.}
\begin{align}\label{eq:interbound}
\frac{\EE \left[ (W_s)^2 |F''(W_s)|\right]}{\EE \left[F(W_s)\right]} \le C.
\end{align}

\noindent But then 
\begin{align*}
\mu_{A_{\delta, i}, 0}(A_{\delta, i}) = \mu_{f \equiv f(v_i, v_i)}(A_{\delta, i})
\qquad \text{and} \qquad \mu_{A_{\delta, i}, 1}(A_{\delta, i}) = \mu_{f}(A_{\delta, i}),
\end{align*}

\noindent and we have by \Cref{lem:split} that
\begin{align*}
\lambda^{-1/2} \EE \left[1 - e^{-\lambda \mu_f(A_\delta)^2}\right]
& =   \sum_{i} \lambda^{-1/2}\EE \left[1 - e^{-\lambda \mu_f(A_{\delta, i})^2}\right] + o(1)\\
& \ge (1-C \delta) \sum_{i} \lambda^{-1/2}\EE \left[ 1 - e^{-\lambda \mu_{f \equiv f(v_i, v_i)} (A_{\delta, i})^2}\right] + o(1)\\
& = (1-C \delta) \sum_{i} \overline{C}_d \sqrt{\pi} |A_{\delta, i}|+ o(1)
= (1-C \delta)  \overline{C}_d \sqrt{\pi} |A_{\delta}|+ o(1),
\end{align*}

\noindent and similarly
\begin{align*}
\lambda^{-1/2}\EE \left[1 - e^{-\lambda \mu_f(A_\delta)^2}\right]
\le (1+C \delta)  \overline{C}_d \sqrt{\pi} |A_{\delta}|+ o(1).
\end{align*}

Since $A$ and $\overline{A}_{\delta}$ are Jordan measurable, so is the set $B_{\delta} = A \setminus \overline{A}_{\delta}$. This means one can find an elementary set $\widetilde{B}_{\delta} \subset B_{\delta}$ such that $|\widetilde{B}_{\delta} \setminus B_{\delta}| \le \delta$, and similar calculation shows that
\begin{align*}
\lambda^{-1/2}\EE \left[1 - e^{-\lambda \mu_f(B_\delta)^2}\right]
\le C |\widetilde{B}_{\delta}|+ o(1)
= 2C  \delta + o(1).
\end{align*}

\noindent Putting all the pieces together, we obtain
\begin{align*}
(1-C \delta)  \overline{C}_d \sqrt{\pi} |A_{\delta}|
& \le \liminf_{\lambda \to 0^+} \lambda^{-1/2} \EE \left[1 - e^{-\lambda \mu_f(A)^2}\right]\\
& \le \limsup_{\lambda \to 0^+} \lambda^{-1/2} \EE \left[1 - e^{-\lambda \mu_f(A)^2}\right]
\le (1+C \delta)  \overline{C}_d \sqrt{\pi} |A_{\delta}| +  2C  \delta.
\end{align*}

\noindent Since $\delta > 0$ is arbitrary, we obtain \eqref{eq:lap_universal} when $g \equiv 1$. In general, we may assume that $g$ is bounded away from zero by reduction \textbf{R1}, and apply the same splitting argument in the proof of \Cref{cor:partial} to extend the tail asymptotics to any continuous densities $g(x) \ge 0$ on $\overline{A}$.

\end{proof}

\subsection{The constant $\overline{C}_d$} \label{subsec:eval}
We recall the following convenient fact which was also used in \cite{Won2019}: if $U$ is a non-negative random variable satisfying $\PP(U > t) \sim C/t$ as $t \to \infty$, then
\begin{align*}
\lim_{\lambda \to 0^+} \frac{\EE\left[U e^{-\lambda U}\right]}{-\log \lambda} = C.
\end{align*}

\noindent We shall take $U = \mu_L(A)$ where $\mu_L(d\cdot)$ is the critical GMC associated with the $L$-exact field $Y_L(\cdot)$, and $A = B(0, 2r)$ for some $r \in (0, r_d(L) \wedge \frac{1}{2})$, and we hope to identify $\overline{C}_d$ via the identity

\begin{align}\label{eq:eval0}
\lim_{\lambda \to 0^+} \frac{\EE\left[\mu_L(A) e^{-\lambda \mu_L(A)}\right]}{-\log \lambda} = \overline{C}_d |A|.
\end{align}

By the localisation trick, we have
\begin{align}\label{eq:eval1}
\EE \left[ \mu_{L}(A) e^{-\lambda \mu_L(A)} \right]
& = \lim_{\epsilon \to 0^+}  \int_A \left(\log \frac{1}{\epsilon}\right)^{1/2}\EE \left[ e^{-\lambda \mu_{L, \epsilon}(v, A)} \right] dv
\end{align}

\noindent where
\begin{align}\label{eq:eval2}
\mu_{L, \epsilon}(v, A) &= \left(\log \frac{1}{\epsilon}\right)^{1/2} \int_A \frac{e^{\sqrt{2d} Y_{L, \epsilon}(x) - d \EE\left[Y_{L, \epsilon}(x)^2\right]}}{\left(|x-v|\vee \epsilon\right)^{2d}} e^{2d L}dx.
\end{align}

\noindent Given that the constant $\overline{C}_d$ does not depend on $L$, \eqref{eq:eval0} suggests that the multiplicative factor $e^{2dL}$ in the integrand of \eqref{eq:eval2} in the definition of $\mu_{L, \epsilon}(v, A)$ may be safely omitted without affecting the limit (as we may always absorb this factor into $\lambda$).

\subsubsection{Gaussian comparison and the evaluation of localisation limit}
Let us introduce our reference log-correlated Gaussian field $X_d$ for $d \ge 2$ (see \Cref{rem:1d} for $d=1$) which is characterised by the covariance kernel
\begin{align*}
\EE[X_d(x) X_d(y)] = - \log |x-y| - S_d(x, y), \qquad \forall x, y \in B(0, 1)
\end{align*}

\noindent where
\begin{align}\label{eq:def_Sd}
S_d(x, y) = -\frac{|\mathbb{S}^{d-2}|}{2|\mathbb{S}^{d-1}|} \int_{-1}^1 (1-u^2)^{\frac{d-2}{2}}\log |1 -2cu + c^2|  du,
\qquad c = \frac{|x|}{|y|} \wedge \frac{|y|}{|x|}.
\end{align}

\noindent The existence of this log-correlated field and the associated critical GMC $\mu_d$ are discussed in \Cref{app:ref}. Here we would like to note that this is just the exact field when $d=2$ because $S_2(x, y) \equiv 0$, whereas in general $S_d(x, y)$
\begin{itemize}
\item is continuous and bounded uniformly in $x$ and $y$ everywhere except at $(x, y) = (0, 0)$ where the function is not defined;
\item is rotation invariant in each variable, and jointly scale invariant in the sense that $S_d(ax, ay) = S_d(x, y)$ for any $a > 0$.
\end{itemize}

By the uniform boundedness of $S_d(\cdot, \cdot)$, we can choose $L_0 < L_1$ and $A = B(0, 2r) \subset B(0, r_d(L_0)) \subset B(0, r_d(L_1)$ such that
\begin{align*}
\EE[Y_{L_0}(x) Y_{L_0}(y)] \le \EE[X_d(x) X_d(y)] \le \EE[Y_{L_1}(x) Y_{L_1}(y)] \qquad \forall x, y \in A
\end{align*}

\noindent and this comparison immediately extends to the mollified fields $Y_{L_0, \epsilon}, X_{d, \epsilon}$ and $Y_{L_1, \epsilon}$. Applying Kahane's convexity inequality \eqref{eq:Gcomp} to the RHS of \eqref{eq:eval1}, we may study instead the limit
\begin{align}
\label{eq:lim_fusion}
& \lim_{\lambda \to 0^+} \frac{1}{-\log \lambda} \left[\lim_{\epsilon \to 0^+}  \int_A \left(\log \frac{1}{\epsilon}\right)^{1/2}\EE \left[ e^{-\lambda \mu_{d , \epsilon}(v, A)} \right] dv\right]\\
\label{eq:lim_fusion2} 
\text{where }\qquad & \mu_{d, \epsilon}(v, A) = \left(\log \frac{1}{\epsilon}\right)^{1/2} \int_A \frac{e^{\sqrt{2d} X_{d, \epsilon}(x-v) - d \EE\left[X_{d, \epsilon}(x-v)^2\right]}}{\left(|x-v|\vee \epsilon\right)^{2d}} dx.
\end{align}

The sequence of random variables $\mu_{d, \epsilon}(v, A)$ actually blows up in the limit as $\epsilon \to 0^+$, which is expected given the extra factor of $(\log 1 / \epsilon)^{1/2}$ in \eqref{eq:lim_fusion}. The blow-up is, not surprisingly, due to the singularity near $x = v$, and it is therefore useful to split the integral into two parts depending on whether $x \in A \cap B(v, r)$ or $x \in A \cap B(v, r)^c$ for further analysis. The latter contribution, i.e.
\begin{align*}
R_{d, \epsilon}(v, r)
& = \left(\log \frac{1}{\epsilon}\right)^{1/2} \int_{A \cap B(v, r)^c} \frac{e^{\sqrt{2d} X_{d, \epsilon}(x-v) - d \EE\left[X_{d, \epsilon}(x-v)^2\right]}}{\left(|x-v|\vee \epsilon\right)^{2d}} dx,
\end{align*}

\noindent does not involve any singularity and actually converges to some finite mass 
\begin{align*}
R_{d}(v, r) = \int_{(A - v) \cap B(0, r)^c} |x|^{-2d} \mu_d(dx)
\end{align*}

\noindent as $\epsilon \to 0^+$ by the construction of critical GMCs.

Fix $\lambda > 0$. It shall be self-evident from our analysis that the integrand in \eqref{eq:lim_fusion} is bounded uniformly in $v \in A$ as $\epsilon$ tends to zero, and by dominated convergence we may interchange the order of the $\epsilon$-limit and integration provided that:
\begin{lem}\label{lem:fusion}
We have
\begin{align}\label{eq:fusion}
\lim_{\epsilon \to 0^+}\left(\log \frac{1}{\epsilon}\right)^{1/2} \EE \left[ e^{-\lambda \mu_{d , \epsilon}(v, A)} \right]
=\sqrt{\frac{2}{\pi}} \int_0^\infty \EE \left[e^{-\lambda\left(e^{\sqrt{2d}x} \widetilde{\mu}_{d}^x(v, r) + R_{d}(v, r)  \right) }\right] dx
\end{align}

\noindent where
\begin{align} \label{eq:fusion_main}
\widetilde{\mu}_d^x (v, r) = e^{\sqrt{2d}B_{-\log r}} \int_{-L_{x, -}}^\infty e^{-\sqrt{2d}\beta_s} Z_d^{A, v} \circ \phi_{- \log r + L_{x, -}}(ds)
\end{align}

\noindent with
\begin{itemize}
\item $Z_d^{A, v}(ds) = \int_{\mathbb{S}^{d-1}} 1_A(v+e^{-s}x) e^{\sqrt{2d}\widehat{Y}_d(e^{-s}x) - d\EE\left[\widehat{Y}_d(e^{-s}x)^2\right]} \sigma(dx) ds$ where $\sigma(dx)$ is the uniform measure on the $(d-1)$-sphere and $\widehat{Y}_d$ is defined in \eqref{eq:Yhat}.
\item $\phi_c: s \mapsto s+c$ is the shift operator.
\item $B_{- \log r} = \overline{X}(r) \sim N(0, -\log r)$.
\item $(\beta_s)_{s \ge 0}, (\beta_{-s})_{s \ge 0}$ are two independent $\mathrm{BES}_0(3)$-process.
\item $L_{x, -} := \sup\{s \ge 0: \beta_{-s} = x\}$.
\end{itemize}
\end{lem}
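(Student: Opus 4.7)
The proof should rest on the radial-angular decomposition of the reference field $X_d$: by construction of the correction $S_d$ in \eqref{eq:def_Sd}, one may write $X_d(e^{-s}\omega) = B_s + \widehat{Y}_d(e^{-s}\omega)$ where $(B_s)_{s\ge 0}$ is a standard Brownian motion independent of the continuous angular field $\widehat{Y}_d$. My strategy is to perform this decomposition inside $\mu_{d,\epsilon}(v,A)$, condition on the running maximum of $B$, apply the time-reversal of \Cref{lem:time_rev} and the $\mathrm{BES}(3)$ path-decomposition of \Cref{lem:BES_path} to produce $\widetilde{\mu}_d^x(v,r)$, and identify $\sqrt{2/\pi}$ as the value of the Gaussian density of the Brownian maximum cancelled against the $(\log 1/\epsilon)^{1/2}$ prefactor.

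First I would split $\mu_{d,\epsilon}(v,A)$ into the inner piece over $A \cap B(v,r)$ and the outer piece $R_{d,\epsilon}(v,r) \to R_d(v,r)$; the latter passes through the limit as a frozen random variable by \Cref{lem:criticalGMC}. For the inner piece, switching to polar coordinates $x-v = e^{-s}\omega$ so that the $e^{-ds}$ Jacobian cancels against $(|x-v| \vee \epsilon)^{-2d}$ and the radial variance renormalisation gives, up to the usual mollification corrections,
\begin{align*}
\mu_{d,\epsilon}(v, A\cap B(v,r)) = \int_{-\log r}^{-\log \epsilon} e^{\sqrt{2d}B_s}\, Z_d^{A,v}(ds) + o(1).
\end{align*}

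Next, I would decompose according to the running maximum $M_t = \max_{s \le t} B_s$ at time $t = -\log\epsilon$: since the inner mass must stay bounded for $e^{-\lambda\mu}$ not to be $\approx 1$, only $M_t = O(1)$ contributes, so writing
\begin{align*}
\EE\!\left[e^{-\lambda \mu_{d,\epsilon}(v,A)}\right] = \int_0^\infty \EE\!\left[e^{-\lambda \mu_{d,\epsilon}(v,A)} \,\big|\, M_{-\log\epsilon} = x\right] p_{-\log\epsilon}(x)\, dx
\end{align*}
with $p_t(x) = \sqrt{2/(\pi t)}\, e^{-x^2/(2t)}$ the density of $M_t$, the product $(\log 1/\epsilon)^{1/2} p_{-\log\epsilon}(x)$ tends pointwise to $\sqrt{2/\pi}$. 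On the conditioned event, \Cref{lem:time_rev} replaces the Brownian trajectory on $[0,T_x]$ by $x - \beta^{+}_{\cdot}$ for a BES(3) process, and applying \Cref{lem:BES_path,lem:RadNik} to the Brownian remainder on $[T_x,-\log\epsilon]$ conditioned to stay below $x$ yields, after time-reversal, a second independent BES(3) leg $x - \beta^{-}_{\cdot}$ on the timescale up to its last hitting time $L_{x,-}$ of $x$. After re-centering so that $T_x$ becomes the origin in the new time parameter, the identity $e^{\sqrt{2d}B_s} = e^{\sqrt{2d}x} e^{-\sqrt{2d}\beta_\cdot}$ explains the $e^{\sqrt{2d}x}$ prefactor in \eqref{eq:fusion}; the isolated Brownian factor $e^{\sqrt{2d}B_{-\log r}}$ is the contribution of the trajectory outside the fusion window $s \le -\log r$; and the shift $\phi_{-\log r + L_{x,-}}$ records precisely the coordinate change needed to align $Z_d^{A,v}$ with the BES(3) time parameter.

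The main obstacle is the justification of the interchange of $\epsilon$-limits with the conditional expectation and the $x$-integration. Uniformity in $\epsilon$ and $x$ should come from moment estimates on the angular GMC $Z_d^{A,v}$ of the type in \Cref{lem:GMC_moment} together with exponential tail controls on the Gaussian fluctuations via \Cref{lem:ctsGP,lem:contGP}, providing an integrable dominating envelope. One must also check that the mollification $X_{d,\epsilon}$ preserves the radial-angular split in the limit; this should follow from the pointwise convergence of the covariances $\EE[X_{d,\epsilon}(x)X_{d,\epsilon}(y)] \to -\log|x-y| - S_d(x,y)$ away from the diagonal, together with \Cref{lem:criticalGMC} applied to both the global and the angular field. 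Once these two analytic points are handled, the remaining bookkeeping matching the two BES(3) legs through the last hitting time $L_{x,-}$ is a routine reparametrisation.
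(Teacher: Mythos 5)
Your overall strategy matches the paper's: decompose $X_d$ into its Brownian radial part $\overline{X}$ and the scale-invariant angular field $\widehat{Y}_d$, rewrite the inner mass in polar/exponential coordinates, isolate the leading contribution via the running maximum of the Brownian motion, and apply \Cref{lem:time_rev}, \Cref{lem:BES_path} and \Cref{lem:RadNik} to convert the pre- and post-maximum legs into the two $\mathrm{BES}_0(3)$-pieces appearing in $\widetilde{\mu}_d^x$. The mechanism you use to extract the $\sqrt{2/\pi}$ prefactor is a genuine variant: you disintegrate directly with respect to the density $p_t(x) = \sqrt{2/(\pi t)}e^{-x^2/(2t)}$ of $M_t$, whereas the paper avoids regular conditional distributions altogether by decomposing on the events $\{M_t - B_{-\log r} \in [k,k+1]\}$ and then implementing a Radon--Nikodym change of measure (their \Cref{lem:RadNik}) under which $k - W_s$ becomes a $\mathrm{BES}_k(3)$-process; the $x$-integral then arises from the $\mathrm{Uniform}[0,k]$ variable in \Cref{lem:BES_path}. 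Your route is conceptually clean but requires one extra justification, namely a rigorous description of the law of the full path $(B_s)_{s\le t}$ conditioned on $M_t = x$, which the paper's event-based route sidesteps.

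Two points that are genuine gaps in your sketch. First, your display for the inner mass silently discards the region $\{|x-v|\le\epsilon\}$ (i.e.\ the $s\ge -\log\epsilon$ range), writing it as ``$+o(1)$''. This piece, call it $\mu^c_{d,\epsilon}(v)$, is not $o(1)$ unconditionally: it only vanishes because on the good event the Brownian motion sits well below its maximum near $s=t$ (the paper controls this via the event $\Ga_{\epsilon,k} = \{\max_{s\in[t^{1/2},t]}B_s - B_{-\log r}\le k - t^{1/8}\}$ together with their Lemma C.4). Your argument needs an explicit counterpart of this step before the inner piece can be dropped. Second, the dominated-convergence step you flag as ``the main obstacle'' is not a matter of routine moment bounds: what is needed is a bound on $(\log 1/\epsilon)^{1/2}\EE[e^{-\lambda\mu_{d,\epsilon}(v,A)}1_{\{M_t\approx k\}}]$ that is both summable in $k$ and uniform in $\epsilon$, which in the paper is their Lemma C.3 giving $C\sqrt{k}\,e^{-\frac{\lambda}{2}\sqrt{2d}k}$; exponential tail bounds on the Gaussian fluctuations alone (as in \Cref{lem:ctsGP}) will not produce this because the bound must exploit the conditioning on the running maximum, not just the unconditioned moments of the angular GMC. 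Likewise, the claim that mollification ``preserves the radial--angular split'' is only needed up to a quantitative error like the paper's Lemma C.2 controlling $\sup_{t\le(-\log\epsilon)^\delta}|B_{\epsilon,t}-B_t|$; pointwise covariance convergence alone is too weak to commute with the diverging prefactor $(\log 1/\epsilon)^{1/2}$.
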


The evaluation of the limit \eqref{eq:fusion} borrows techniques from the study of fusion estimates in Gaussian multiplicative chaos \cite{BW2018}, and we postpone the sketch of proof of \Cref{lem:fusion} to \Cref{app:fusion}. 

\subsubsection{Identifiying the proportionality constant}
Our remaining task is the evaluation of the limit
\begin{align*}
\lim_{\lambda \to 0^+} \frac{1}{-\log \lambda} \int_0^\infty \EE \left[e^{-\lambda\left(e^{\sqrt{2d}x} \widetilde{\mu}_{d}^x(v, r) + R_{d}(v, r)  \right) }\right] dx.
\end{align*}

\noindent We first show that $\widetilde{\mu}_d^x(v, r)$ and $R_d(v, r)$ possess moments of all order smaller than $1$:
\begin{lem}\label{lem:fusion_moment}
For each $q < 1$, we have
\begin{align*}
\EE \left[R_d(v, r)^q \right] < \infty
\qquad \EE \left[ \widetilde{\mu}_d^x(v, r)^q \right] < \infty
\end{align*}

\noindent where the bounds may be taken uniformly in $v \in \overline{A}$ and $x \ge 0$. In particular,
\begin{align*}
& \left|\EE \left[\log \left(\widetilde{\mu}_d^x(v, r) + R_d(v, r)\right) \right]\right| \\
&\qquad  \le 2\EE \left[\left(\widetilde{\mu}_d^x(v, r) + R_d(v, r)\right)^{1/2}+ \left(\widetilde{\mu}_d^x(v, r) + R_d(v, r)\right)^{-1/2} \right] 
< \infty
\end{align*}

\noindent uniformly in $v \in \overline{A}$ and $x \ge 0$.
\end{lem}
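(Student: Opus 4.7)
The plan is to establish the two moment bounds separately and then derive the logarithmic estimate from a pointwise inequality. For $R_d(v,r)$, the key observation is that $|x|^{-2d}\le r^{-2d}$ on $B(0,r)^c$ and $A-v\subset B(0,\operatorname{diam}(A))$ uniformly in $v\in\overline A$, so $R_d(v,r)\le r^{-2d}\mu_d(B(0,\operatorname{diam}(A)))$ and Lemma \ref{lem:GMC_moment}(i) immediately gives the desired uniform bound for $q<1$.

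For $\widetilde{\mu}_d^x(v,r)$, I would apply the subadditivity inequality $(\sum_i a_i)^q\le\sum_i a_i^q$ valid for $q\in(0,1)$ and non-negative $a_i$, decomposing the radial integration $\int_{-L_{x,-}}^\infty e^{-\sqrt{2d}\beta_s}Z_d^{A,v}\circ\phi_{-\log r+L_{x,-}}(ds)$ into unit-length intervals $[n,n+1)$ in $s$. On each such piece the Bessel factor is dominated by $e^{-\sqrt{2d}\min_{[n,n+1)}\beta_s}$, while the remaining mass $Z_d^{A,v}\circ\phi_{-\log r+L_{x,-}}([n,n+1))$ is the critical GMC mass of a spherical shell of radius $\asymp re^{-L_{x,-}-n}$ around $v$, whose $q$-th moment is controlled by Lemma \ref{lem:GMC_moment}(ii) as $\lesssim(re^{-L_{x,-}-n})^{d(2q-q^2)}$. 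Because $\widehat Y_d$ is independent of the Bessel processes, expectation factorizes, and summing over $n\ge-\lceil L_{x,-}\rceil$ produces a convergent series. Williams' path decomposition (Lemma \ref{lem:BES_path}) and the explicit law of $L_{x,-}$ (from Lemma \ref{lem:time_rev}) then allow averaging over $L_{x,-}$ to produce a bound independent of $x\ge 0$; the log-normal prefactor contributes the harmless $\EE[e^{q\sqrt{2d}B_{-\log r}}]=r^{-dq^2}<\infty$.

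For the logarithmic bound, set $Y=\widetilde{\mu}_d^x(v,r)+R_d(v,r)$ and use the elementary inequality $|\log y|\le y^{1/2}+y^{-1/2}$ for $y>0$, giving $|\EE[\log Y]|\le\EE[Y^{1/2}]+\EE[Y^{-1/2}]$. The positive half-moment is handled via $(a+b)^{1/2}\le a^{1/2}+b^{1/2}$ and the moment bounds above. The negative half-moment uses $Y^{-1/2}\le R_d(v,r)^{-1/2}\le\operatorname{diam}(A)^d\,\mu_d((A-v)\cap B(0,r)^c)^{-1/2}$; critical GMC has finite negative moments of all orders on any open set of positive Lebesgue measure (a standard Cameron--Martin shift, cf.\ \cite{DRSV2014a}), and $|(A-v)\cap B(0,r)^c|$ is bounded below uniformly in $v\in\overline A$ by elementary geometry since $A=B(0,2r)$. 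The hardest step will be ensuring uniformity in $x\ge 0$ of the bound on $\widetilde{\mu}_d^x(v,r)$, since the integration lower limit $L_{x,-}$ is itself random and $x$-dependent; the resolution relies on the explicit law of $L_{x,-}$ combining cleanly with the Bessel-driven exponential decay in the series to yield an $x$-independent constant.
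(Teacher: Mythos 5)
Your treatment of $R_d(v,r)$ is essentially in line with the paper (which invokes \Cref{lem:GMC_moment} directly for all $q<1$, taking $v\in\partial A$ for negative moments and $v=0$ for positive ones). However, the positive-moment bound for $\widetilde{\mu}_d^x(v,r)$ contains a genuine error, and the negative moments of $\widetilde{\mu}_d^x(v,r)$ are never addressed.

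The central problem is the claim that
$Z_d^{A,v}\circ\phi_{-\log r+L_{x,-}}([n,n+1))$ is ``the critical GMC mass of a spherical shell of radius $\asymp re^{-L_{x,-}-n}$ ... whose $q$-th moment is $\lesssim(re^{-L_{x,-}-n})^{d(2q-q^2)}$.'' Unwinding the change of variables $y=e^{-(s+c)}u$, one finds $ds\,\sigma(du)=|y|^{-d}\,dy$, so the object in question is the critical chaos of the field $\widehat Y_d$ (not $X_d$) integrated against the singular density $|y|^{-d}\asymp\rho^{-d}$ on the shell of radius $\rho:=re^{-L_{x,-}-n}$. The density factor contributes a prefactor $\rho^{-dq}$, and because $\widehat Y_d$ is $X_d$ with the radial Brownian part removed, the $q$-th moment of the $\widehat Y_d$-chaos of a shell of radius $\rho$ scales like $\rho^{dq}$ (not $\rho^{2dq-dq^2}$); combining the two gives $\rho^{0}=O(1)$. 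Indeed the paper points out precisely this: by translation invariance of $Z_d(ds)$ in $s$, the quantity $\EE\bigl[\bigl(\int_n^{n+1}Z_d(ds)\bigr)^q\bigr]$ is \emph{independent of} $n$, so there is no spatial decay whatsoever. The summability must come entirely from the Bessel factor. The paper uses Jensen on the inner $\beta$-expectation and the estimate $\EE_\beta[e^{-\sqrt{2d}\beta_s}]\le Cs^{-3/2}$ to get a convergent sum $\sum_n n^{-3q/2}$ for $q>2/3$, which then covers all $q\in[0,1)$ by monotonicity of moments. Your proof, as written, would produce a divergent series.

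Separately, the first display of the lemma asserts $\EE[\widetilde{\mu}_d^x(v,r)^q]<\infty$ for every $q<1$, including $q<0$, uniformly in $v,x$. You only establish positive moments of $\widetilde{\mu}_d^x$ and sidestep its negative moments by bounding $Y^{-1/2}\le R_d(v,r)^{-1/2}$. That is sufficient for the ``in particular'' conclusion, but it leaves a gap in the first claim, which the paper itself uses later (e.g.\ for $\EE[\log\widetilde{\mu}_d^{x_0}(v,r)]$ in \Cref{lem:limit_result}). The paper proves the negative-moment claim by observing that $1_A(v+e^{-s}u)$ is nondecreasing in $s$ (since $A=B(0,2r)$), so $Z_d^{A,v}\circ\phi_{-\log r+L_{x,-}}$ stochastically dominates $Z_d^{A,v}\circ\phi_{-\log r}$; one then localises to a small Bessel time window, realises the Bessel process as the modulus of a three-dimensional Brownian motion, and uses a dyadic decomposition in the time parameter together with \Cref{lem:GMC_moment}(ii) to control the resulting GMC masses. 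You will need an argument of this sort to complete the first part of the lemma.
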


\begin{proof}
We only need to treat $\widetilde{\mu}_d^x(v, r)$ since the statement regarding $R_d(v, r)$ follows immediately by the existence of GMC moments from \Cref{lem:GMC_moment} (and a uniform bound may be obtained if we take $v \in \partial A = \{x: |x| = 2r\}$ for negative moments, or $v = 0$ for positive moments).

Let us commence with the negative moments $q < 0$. From the construction in \Cref{app:fusion} we have
\begin{align*}
\EE\left[\widehat{Y}_d(x)\widehat{Y}_d(y)\right] = \log \frac{|x| \vee |y|}{|x-y|} - S_d(x, y),
\end{align*}

\noindent showing that $\widehat{Y}(u)$ is scale invariant in $u \in \RR^d$, or equivalently $\widehat{Y}_d(e^{-s}x)$ is translation invariant in $s > 0$. Combining this with the observation that $1_A(v+ e^{-s} x) \le 1_A(v+ e^{-(s+c)}x)$ for any $c > 0$ (since $A = B(0, 2r)$ here), we see that $Z_d^{A, v}\circ \phi_{- \log r + L_{x, -}} (ds)$ stochastically dominates $Z_d^{A,v}\circ \phi_{-\log r} (ds)$. It is therefore sufficient to show that
\begin{align*}
\EE \left[ \left(\int_{0}^\tau e^{-\sqrt{2d}\beta_s} Z_d^{A, v}\circ \phi_{-\log r}  (ds)\right)^q \right] < \infty
\end{align*}

\noindent where $\tau := \inf \{s \ge 0: \beta_s \ge 4\}$. Viewing our Bessel process $(\beta_s)_{s \ge 0}$ as the evolution of the Euclidean norm of a $3$-dimensional Brownian motion $(B_s^1, B_s^2, B_s^3)_{s \ge 0}$, we have
\begin{align*}
\EE \left[ \left(\int_{0}^\tau e^{-\sqrt{2d}\beta_s} Z_d^{A, v}\circ \phi_{-\log r} (ds)\right)^q \right] 
& \le \sum_{i=1}^3\EE\left[ \left(\int_{0}^{\tau_i} e^{-\sqrt{2d}\sqrt{|B_s^i|^2 + 2}} Z_d^{A, v}\circ \phi_{-\log r} (ds)\right)^{q}\right]\\
& \le 3e^{2q\sqrt{d}} \EE\left[ e^{q\sqrt{2d} \sup_{s \le \tau_1} |B_s^1|}\left(\int_{0}^{\tau_1}Z_d^{A, v}\circ \phi_{-\log r} (ds)\right)^{q}\right]
\end{align*}

\noindent where $\tau_i := \min(1, \inf \{s \ge 0: |B_s^i| \ge 1 \})$. But then
\begin{align}
\notag & \EE\left[ e^{q\sqrt{2d} \sup_{s \le \tau_1} |B_s^1|}\left(\int_{0}^{\tau_1}Z_d^{A, v}\circ \phi_{-\log r} (ds)\right)^{q}\right]\\
\notag & \quad \le  \EE\left[ e^{q\sqrt{2d} \sup_{s \le 1} |B_s^1|}\right]  \EE\left[\left(\int_{0}^{1}Z_d^{A, v}\circ \phi_{-\log r} (ds)\right)^{q}\right]\\
\notag & \quad \qquad + \sum_{k \ge 0}  \EE\left[1_{\{2^{-k-1} \le \tau_1 \le 2^{-k}\}}e^{q\sqrt{2d} \sup_{s \le 2^{-k}} |B_s^1|}\left(\int_{0}^{2^{-k}}Z_d^{A, v}\circ \phi_{-\log r} (ds)\right)^{q}\right]\\
\label{eq:nmom_bound}
& \quad \le C + \sum_{k \ge 0} \PP\left(2^{-k-1} \le \tau_1 \le 2^{-k}\right)^{1/2} \EE \left[e^{2q\sqrt{2d} \sup_{s \le 1}|B_s^1|}\right] \EE\left[\left(\int_{0}^{2^{-k}}Z_d^{A, v}\circ \phi_{-\log r} (ds)\right)^{q}\right].
\end{align}

\noindent We now rewrite $\int_0^{2^{-k}} Z_d^{A, v}(ds)$ in terms of Euclidean coordinates:
\begin{align*}
\int_0^{2^{-k}} Z_d^{A, v}\circ \phi_{-\log r} (ds) 
& = \int_{(A-v) \cap \{re^{-2^{-k}} \le |u| \le r\}} |u|^{2d} e^{\sqrt{2d} \widehat{Y}_d(u) - d \EE\left[\widehat{Y}_d(u)^2\right]} du.
\end{align*}

By the rotational invariance of $\widehat{Y}_d(u)$, we may assume without loss of generality that $v$ lies on the negative $e_1$-axis. It is then not hard to show that the ball centred at $\frac{r}{2} (1+e^{-2^{-k}}) e_1$ of radius $\frac{r}{1000}2^{-k}$ lies inside the intersection $B(-v, 2r) \cap \{re^{-2^{-k}} \le |u| \le r\}$, i.e.
\begin{align*}
\int_0^{2^{-k}} Z_d^{A, v}\circ \phi_{-\log r} (ds) 
& \ge (r/4)^{2d} \int_{B(\frac{r}{2} (1+e^{-2^{-k}}) e_1,  \frac{r}{1000}2^{-k})} e^{\sqrt{2d} \widehat{Y}_d(u) - d \EE\left[\widehat{Y}_d(u)^2\right]} du
\end{align*}

\noindent since $|u|\ge re^{-2^{-k}} \ge r/4$ for $k \ge 0$. We apply the scaling property of GMC moments from  \Cref{lem:GMC_moment}) (ii) and obtain
\begin{align*}
\EE\left[\left(\int_{0}^{2^{-k}}Z_d^{A, v}\circ \phi_{-\log r}(ds)\right)^{q}\right] \le C 2^{-k (2dq - dq^2)}
\end{align*} 

\noindent for some constant $C > 0$ independent of $v$. We also have
\begin{align*}
\PP\left(2^{-k-1} \le \tau_1 \le 2^{-k}\right)
\le \PP\left(\max_{s \le 2^{-k}} B_s^1 > 1\right)
\le C e^{-c 2^{k}},
\end{align*}

\noindent which implies that \eqref{eq:nmom_bound} is summable and hence all negative moments exist.

For $q \in [0, 1)$, we only need to show that 
\begin{align*}
\EE \left[ \left(\int_{1}^\infty e^{-\sqrt{2d}\beta_s} Z_d(ds)\right)^q \right] < \infty
\end{align*}

\noindent where $Z_d(ds)$ is defined in the same way as $Z_d^{A, v}(ds)$ except without the indicator function $1_A(v + e^{-s}x)$. Denote by $\EE_{Z_d}$ and $\EE_{\beta}$ the expectation over $Z_d(d\cdot)$ and $(\beta_s)_s$ respectively. By Jensen's inequality,
\begin{align*}
\EE \left[ \left(\int_{1}^\infty e^{-\sqrt{2d}\beta_s} Z_d(ds)\right)^q \right]
& \le \EE_{Z_d}\left[   \left(\EE_{\beta}\left[\int_{1}^\infty e^{-\sqrt{2d}\beta_s} Z_d(ds)\right]\right)^q \right]\\
& = \EE_{Z_d}\left[   \left(\int_{1}^\infty \EE_{\beta}\left[e^{-\sqrt{2d}\beta_s}\right] Z_d(ds)\right)^q \right].
\end{align*}

\noindent For each $s \ge 0$, we have $\beta_s \overset{d}{=} \sqrt{s}\chi_3$ where $\chi_3$ is a chi(3)-random variable, and so
\begin{align*}
\EE_{\beta}\left[e^{-\sqrt{2d}\beta_s}\right]
= \frac{1}{\sqrt{2}\Gamma(3/2)}\int_0^\infty e^{-\sqrt{2ds}x} x^2 e^{-x^2 / 2} dx
\le C s^{-3/2}.
\end{align*}

\noindent Thus,
\begin{align*}
\EE_{Z_d}\left[   \left(\int_{1}^\infty \EE_{\beta}\left[e^{-\sqrt{2d}\beta_s}\right] Z_d(ds)\right)^q \right]
& \le  C\EE\left[   \left(\int_{1}^\infty s^{-3/2} Z_d(ds)\right)^q \right]\\
& \le C \sum_{n=1}^\infty n^{-\frac{3q}{2}} \EE\left[   \left(\int_{n}^{n+1} Z_d(ds)\right)^q \right]
\end{align*}

\noindent which is summable if we choose any $q \in [0, 1)$ such that $3q/2 > 1$, as $\EE\left[   \left(\int_{n}^{n+1} Z_d(ds)\right)^q \right]$ does not depend on $n$ by translation invariance of $Z_d(ds)$ (which is inherited from the translation invariance of $\widehat{Y}_d(e^{-s}x)$ in $s$) and is finite by \Cref{lem:GMC_moment} for the existence of moments of critical GMCs.
\end{proof}

We are now ready to evaluate the constant $\overline{C}_d$.
\begin{lem}\label{lem:limit_result}
We have
\begin{align}\label{eq:limit_result}
\lim_{\lambda \to 0^+} \frac{1}{-\log \lambda}\int_0^\infty \EE \left[e^{-\lambda\left(e^{\sqrt{2d}x} \widetilde{\mu}_{d}^x(v, r) + R_{d}(v, r)  \right) }\right]dx = \frac{1}{\sqrt{2d}}.
\end{align}

\noindent In particular, the constant $\overline{C}_d$ is equal to $(\pi d)^{-1/2}$.
\end{lem}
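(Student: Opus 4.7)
The plan is to first establish the integral asymptotic \eqref{eq:limit_result} via a splitting argument exploiting the uniform moment bounds of \Cref{lem:fusion_moment}, and then to identify $\overline{C}_d$ by combining \eqref{eq:limit_result} with the fusion representation of \Cref{lem:fusion}, Kahane's convexity inequality, and the Tauberian-type identity~\eqref{eq:eval0}. The underlying heuristic is that the integrand $\EE\bigl[e^{-\lambda(e^{\sqrt{2d}x}\widetilde{\mu}_d^x(v,r)+R_d(v,r))}\bigr]$ is close to $1$ when $\lambda e^{\sqrt{2d}x}$ is small and close to $0$ when it is large, with the transition occurring around $X_\lambda := (-\log\lambda)/\sqrt{2d}$. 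Thus the integral should behave like $X_\lambda + O(1)$, and dividing by $-\log\lambda = \sqrt{2d}\,X_\lambda$ yields $1/\sqrt{2d}$.

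To make this rigorous, fix $\eta\in(0,1)$ and $q\in(0,1)$, write $T_x := e^{\sqrt{2d}x}\widetilde{\mu}_d^x(v,r)+R_d(v,r)$, and partition $[0,\infty) = I_1\cup I_2\cup I_3$ with $I_1 = [0,(1-\eta)X_\lambda]$, $I_2 = [(1-\eta)X_\lambda,(1+\eta)X_\lambda]$ and $I_3 = [(1+\eta)X_\lambda,\infty)$. On $I_1$, the elementary inequalities $1-e^{-y}\le y^q$ and $(a+b)^q\le a^q+b^q$ (for $a,b,y\ge 0$ and $q\in(0,1)$) together with the uniform bounds of \Cref{lem:fusion_moment} give
\begin{align*}
\EE[1-e^{-\lambda T_x}]\le\lambda^q\bigl(e^{\sqrt{2d}qx}\EE[\widetilde{\mu}_d^x(v,r)^q]+\EE[R_d(v,r)^q]\bigr)\le C\lambda^{q\eta},
\end{align*}
so that $\int_{I_1}\EE[e^{-\lambda T_x}]\,dx\ge (1-\eta)X_\lambda(1-C\lambda^{q\eta})$. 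On $I_3$, the inequality $e^{-y}\le(q/e)^q y^{-q}$ combined with $T_x\ge e^{\sqrt{2d}x}\widetilde{\mu}_d^x(v,r)$ and the uniform bound on $\EE[\widetilde{\mu}_d^x(v,r)^{-q}]$ from \Cref{lem:fusion_moment} gives $\EE[e^{-\lambda T_x}]\le C\lambda^{-q}e^{-\sqrt{2d}qx}$, so that $\int_{I_3}\EE[e^{-\lambda T_x}]\,dx=O(\lambda^{q\eta})=o(1)$. Since trivially $\int_{I_2}\EE[e^{-\lambda T_x}]\,dx\le 2\eta X_\lambda$, combining these estimates, dividing by $-\log\lambda$, and letting first $\lambda\to 0^+$ and then $\eta\to 0^+$ yields \eqref{eq:limit_result}.

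For the identification of $\overline{C}_d$, plug \eqref{eq:limit_result} into \eqref{eq:fusion} and interchange the $v$-integration with the $\lambda$-limit by dominated convergence, justified by the uniformity in $v\in\overline{A}$ of the above bounds, to obtain
\begin{align*}
\lim_{\lambda\to 0^+}\frac{1}{-\log\lambda}\lim_{\epsilon\to 0^+}\int_A\Bigl(\log\tfrac{1}{\epsilon}\Bigr)^{1/2}\EE\bigl[e^{-\lambda\mu_{d,\epsilon}(v,A)}\bigr]\,dv=\sqrt{\tfrac{2}{\pi}}\cdot\frac{|A|}{\sqrt{2d}}=\frac{|A|}{\sqrt{\pi d}}.
\end{align*}
Applying Kahane's convexity inequality \eqref{eq:Gcomp} with the convex function $F(y)=e^{-\lambda y}$ to the mollified fields squeezes the same limit between the corresponding quantities for the $L_0$- and $L_1$-exact GMCs, since the comparison errors together with the multiplicative factor $e^{2dL}$ in \eqref{eq:eval2} are $\lambda$-independent and disappear upon division by $-\log\lambda\to\infty$. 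The identity therefore persists with $\mu_{d,\epsilon}(v,A)$ replaced by $\mu_{L,\epsilon}(v,A)$, and combining with the localisation identity \eqref{eq:eval1} and the Tauberian relation \eqref{eq:eval0} gives $\overline{C}_d|A|=|A|/\sqrt{\pi d}$, i.e.~$\overline{C}_d=(\pi d)^{-1/2}$.

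The main obstacle is the final Gaussian comparison step: although the integral splitting itself is essentially routine once the uniform moment estimates of \Cref{lem:fusion_moment} are at hand, one needs to verify carefully that all errors from Kahane's inequality and from the auxiliary constants ($e^{2dL}$ and the $S_d$-induced covariance shift) are $\lambda$-independent multiplicative factors, whose contribution to $\EE[\mu_L(A)e^{-\lambda\mu_L(A)}]/(-\log\lambda)$ is only an additive $O(1)$ error that vanishes in the limit.
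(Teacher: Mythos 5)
Your argument for \eqref{eq:limit_result} is correct, but it takes a genuinely different route from the paper's. The paper exploits the stochastic monotonicity of $\widetilde{\mu}_d^x(v,r)$ in $x$ to sandwich the integral between two expressions of the form $\int_{x_0}^\infty \EE[\exp\{-\lambda e^{\sqrt{2d}x}U\}]\,dx$ for fixed random variables $U$, and then applies the change of variables $y=\lambda e^{\sqrt{2d}x}U$ together with an integration-by-parts decomposition of the exponential integral to extract the leading term $\tfrac{-\log\lambda}{\sqrt{2d}}$ plus an $O(1)$ remainder controlled by the $\log$-moment bound of \Cref{lem:fusion_moment}. You instead partition $[0,\infty)$ around the transition point $X_\lambda=(-\log\lambda)/\sqrt{2d}$ into $\eta$-dependent windows, and control the tails by the positive and negative $q$-moment bounds from the same lemma, passing $\lambda\to0^+$ before $\eta\to0^+$. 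Your route is arguably more elementary --- no exponential-integral manipulation --- at the cost of an extra outer limit; both deliver the same conclusion and both rely on the uniformity in $v,x$ asserted in \Cref{lem:fusion_moment}, which is the real input here (and which you also correctly invoke to justify dominated convergence in the $v$-integration).

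For the identification of $\overline{C}_d$ your argument is essentially the paper's (Kahane squeeze between the $L_0$- and $L_1$-exact fields, the $e^{2dL}$ factor absorbed into $\lambda$, then \eqref{eq:eval0}). One presentational caveat: you describe the Kahane comparison as producing ``comparison errors'' that are ``$\lambda$-independent multiplicative factors''; in fact Kahane gives genuine inequalities, not an identity plus an error, so the correct formulation is that the lower and upper bounds both involve exact fields $Y_{L_0},Y_{L_1}$ whose corresponding limits coincide because $\overline{C}_d$ has already been shown (\Cref{cor:partial}) to be independent of $L$. With that wording fixed, the argument closes cleanly.
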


\begin{proof}
Since $\widetilde{\mu}_d^x(v, r)$ is strictly increasing in $x$ (in the sense of stochastic order), we have
\begin{align*}
\int _{x_0}^\infty &  \EE \left[\exp\left\{-\lambda e^{\sqrt{2d}x}\left( \widetilde{\mu}_{d}^{\infty} (v, r) + \epsilon R_d(v, r) \right)\right\} \right] dx\\
&\qquad \le \int _0^\infty  \EE \left[\exp\left\{-\lambda\left(e^{\sqrt{2d}x} \widetilde{\mu}_{d}^x(v, r) + R_{d}(v, r)  \right)\right\}\right] dx\\
& \qquad \qquad  \qquad \le x_0 + \int _{x_0}^\infty  \EE \left[\exp\left\{-\lambda\left(e^{\sqrt{2d}x} \widetilde{\mu}_{d}^{x_0} (v, r) \right) \right\}\right]dx
\end{align*}

\noindent where $x_0 > 0$ and $\epsilon = e^{-\sqrt{2d}x_0}$.

For generic non-negative random variable $U$, we have
\begin{align}
\notag &\int_{x_0}^\infty \EE \left[ \exp \left\{-\lambda e^{\sqrt{2d}x}U\right\}\right] dx
= \frac{1}{\sqrt{2d}}\EE\left[\int_{\lambda e^{\sqrt{2d}x_0}U}^\infty e^{-y} \frac{dy}{y}\right]\\
\label{eq:terms}
& = \frac{1}{\sqrt{2d}}\left\{
-\EE\left[ e^{-\lambda e^{\sqrt{2d}x_0}U} \log \left(\lambda e^{\sqrt{2d}x_0}U\right)\right]
+ \EE\left[\int_{\lambda e^{\sqrt{2d}x_0}U}^\infty e^{-y}\log y  dy\right]
\right\}.
\end{align}

\noindent Therefore, 
\begin{align*}
& \int _{x_0}^\infty \EE \left[\exp\left\{-\lambda e^{\sqrt{2d}x}\left( \widetilde{\mu}_{d}^{\infty} (v, r) + \epsilon R_d(v, r) \right)\right\} \right] dx\\
& \qquad \ge  \frac{-\log \lambda }{\sqrt{2d}}\EE\left[ e^{-\lambda e^{\sqrt{2d}x_0}(\widetilde{\mu}_{d}^{\infty} (v, r) + \epsilon R_d(v, r))}\right] \\
& \qquad \quad -\underbrace{\frac{1}{\sqrt{2d}} \left\{\EE\left[ \sqrt{2d}x_0 + \log \left(\widetilde{\mu}_d^\infty(v, r) + \epsilon R_d( v, r)\right)\right] + \int_0^\infty e^{-y}|\log y|  dy \right\}}_{O(1)}
\end{align*}

\noindent and
\begin{align*}
&  \int _{x_0}^\infty  \EE \left[\exp\left\{-\lambda\left(e^{\sqrt{2d}x} \widetilde{\mu}_{d}^{x_0} (v, r) \right) \right\}\right]dx\\
& \qquad \qquad \le \frac{-\log \lambda }{\sqrt{2d}} + \underbrace{ \frac{1}{\sqrt{2d}} \left\{\EE \left[ \log \widetilde{\mu}_d^{x_0}(v, r)\right] + \int_0^\infty e^{-y} \log|y|dy  \right\}}_{O(1)}
\end{align*}

\noindent and hence
\begin{align*}
\lim_{\lambda \to 0^+} \frac{1}{-\log \lambda}\int_0^\infty \EE \left[e^{-\lambda\left(e^{\sqrt{2d}x} \widetilde{\mu}_{d}^x(v, r) + R_{d}(v, r)  \right) }\right]dx = \frac{1}{\sqrt{2d}}.
\end{align*}

The above analysis also easily gives
\begin{align*}
\frac{1}{-\log \lambda}\int_0^\infty \EE \left[e^{-\lambda\left(e^{\sqrt{2d}x} \widetilde{\mu}_{d}^x(v, r) + R_{d}(v, r)  \right) }\right]dx \le C
\end{align*}

\noindent for some constant $C > 0$ independent of $v \in A$ and $\lambda \in (0, 1/2)$, thanks to the uniform bounds from \Cref{lem:fusion_moment}. To finish our proof, recall that
\begin{align*}
\overline{C}_d|A| 
& = \lim_{\lambda \to 0^+} \frac{\EE\left[\mu_L(A) e^{-\lambda \mu_L(A)}\right]}{-\log \lambda}\\
& = \sqrt{\frac{2}{\pi}} \lim_{\lambda \to 0^+}\frac{1}{-\log \lambda}  \int_A  \left(\int_0^\infty \EE \left[e^{-\lambda\left(e^{\sqrt{2d}x} \widetilde{\mu}_{d}^x(v, r) + R_{d}(v, r)  \right) }\right] dx\right) dv\\
& = \sqrt{\frac{2}{\pi}} \int_A  \frac{1}{\sqrt{2d}} dv = \frac{1}{\sqrt{\pi d}} |A|
\end{align*}

\noindent by dominated convergence, from which we conclude that $\overline{C}_d = (\pi d)^{-1/2}$. 
\end{proof}

\begin{rem}\label{rem:1d}
The analysis can be performed ad verbatim when $d=1$, except that we pick the exact field $Y_0(\cdot)$ on the interval $[-1,1]$ as our reference field $X_d(\cdot)$, and that $Z_{d}^{A, v}(ds)$ is defined according to the substitution of variable $e^{-s} = |x- v|$.
\end{rem}

\appendix
\section{Proof of \Cref{lem:cross}}\label{app:cross}
The goal of this appendix is to give a proof of
\begin{align*}
\EE\left[\mu_{f, g}(B_1)^h \mu_{f, g}(B_2)^h \right] < \infty.
\end{align*}

We may assume without loss of generality that $B_1, B_2 \in B(0, r_d(0))$ using a basic scaling argument, and prove the above claim for $g \equiv 1$ and $f \equiv 0$ as the general case follows from the following basic consideration: if $G_-(\cdot)$ is an independent continuous Gaussian field on $\overline{D}$ with covariance $f_-(\cdot, \cdot)$ on $\overline{D} \times \overline{D}$, then
\begin{align*}
(X(x) + G_-(x))_{x \in D} \overset{d}{=} (Y_0(x) + G_+(x))_{x \in D}
\end{align*}

\noindent where $Y_0(\cdot)$ is the exact field and $G_+(\cdot)$ is an independent continuous Gaussian field with covariance $f_+(\cdot, \cdot)$, and
\begin{align*}
\EE\left[\mu_{f, g}(B_1)^h \mu_{f, g}(B_2)^h \right]
& \le ||g||_{\infty}^{2h} \EE\left[\mu_{f}(B_1)^h \mu_{f}(B_2)^h \right]\\
& \le \frac{||g||_{\infty}^{2h}}{\EE \left[e^{2h \min_{x \in \overline{D}} (\sqrt{2d}G_-(x) - d \EE[G_-(x)^2])}\right]} \EE\left[\mu_{f_+}(B_1)^h \mu_{f_+}(B_2)^h \right]\\
& \le \frac{||g||_{\infty}^{2h}\EE \left[e^{2h \max_{x \in \overline{D}} (\sqrt{2d}G_+(x) - d \EE[G_+(x)^2])}\right]}{\EE \left[e^{2h \min_{x \in \overline{D}} (\sqrt{2d}G_-(x) - d \EE[G_-(x)^2])}\right]}  \EE\left[\mu_{0}(B_1)^h \mu_{0}(B_2)^h \right]
\end{align*}

\noindent where $\mu_0$ is the critical GMC associated to $Y_0$, and the factor in front of the cross moment on the last line is finite by \Cref{lem:ctsGP}.

We start with the simple case where $\mathrm{dist}(B_1, B_2) > 0$.
\begin{lem}\label{lem:cross_dis}
Let $A_1, A_2 \subset B(0, r_d)$ be two Borel sets such that $\delta :=\mathrm{dist}(A_1, A_2) > 0$. Then for any $h \in [0, 1)$
\begin{align*}
\EE\left[\mu_{0}(A_1)^h \mu_{0}(A_2)^h \right]
\le C_h \delta^{-dh}.
\end{align*}

\noindent for some constant $C_h > 0$ independent of $A_1$ and $A_2$.
\end{lem}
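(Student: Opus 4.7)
The plan is a Gaussian comparison via Kahane's inequality against a decoupled reference field. After the reductions preceding the lemma (so $f\equiv 0,g\equiv 1$), we work with the critical GMC $\mu_0$ of the exact field $Y_0$ on $B(0,r_d)$, WLOG with $r_d<1/2$. Let $Y_0^{(1)},Y_0^{(2)}$ be independent copies of $Y_0$ on $B(0,r_d)$, $N\sim\mathcal N(0,-\log\delta)$ an independent common Gaussian, and define
\[
\tilde Y(x):=Y_0^{(1)}(x)\mathbf 1_{A_1}(x)+Y_0^{(2)}(x)\mathbf 1_{A_2}(x)+N.
\]
A direct covariance check, using $|x-y|\ge\delta$ on $A_1\times A_2$ and $|x-y|\le 2r_d<1$ throughout $A_1\cup A_2$, gives $\EE[\tilde Y(x)\tilde Y(y)]\ge\EE[Y_0(x)Y_0(y)]$ everywhere on $A_1\cup A_2$; the corresponding critical GMC has the convenient factorisation $\mu_{\tilde Y}(A_i)=\delta^d e^{\sqrt{2d}N}\mu_0^{(i)}(A_i)$ with $\mu_0^{(1)},\mu_0^{(2)}$ independent.

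For $h\in[0,\tfrac12)$ I would apply Kahane's inequality directly: the functional $F(m_1,m_2)=m_1^h m_2^h$ is jointly concave on $\RR_+^2$ in this range, since its Hessian has determinant $h^2(1-2h)(m_1 m_2)^{2h-2}>0$ with negative trace. The multidimensional form of Kahane's inequality, obtained from Lemma 2.9 by applying the interpolation formula to mollified fields and passing to the $\epsilon\to 0^+$ limit via dominated convergence, yields
\[
\EE[\mu_0(A_1)^h\mu_0(A_2)^h]\le\EE[\mu_{\tilde Y}(A_1)^h\mu_{\tilde Y}(A_2)^h]=\delta^{2dh}\,\EE[e^{2h\sqrt{2d}N}]\,\EE[\mu_0^{(1)}(A_1)^h]\EE[\mu_0^{(2)}(A_2)^h].
\]
The Gaussian moment equals $\delta^{-4dh^2}$, so the net $\delta$-factor is $\delta^{2dh(1-2h)}\le 1$; combined with the finite single-set moments from Lemma 2.11 this gives a $\delta$-independent bound, hence trivially $\le C_h\delta^{-dh}$.

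The main obstacle is the range $h\in[\tfrac12,1)$, where $F$ becomes a saddle and direct Kahane fails. The plan is to pass to the Laplace side via the Mellin identity $a^h=\frac{h}{\Gamma(1-h)}\int_0^\infty(1-e^{-\lambda a})\lambda^{-1-h}d\lambda$ applied to each factor, so that
\[
\EE[\mu_0(A_1)^h\mu_0(A_2)^h]=c_h^2\iint_0^\infty\EE\big[(1-e^{-\lambda_1\mu_0(A_1)})(1-e^{-\lambda_2\mu_0(A_2)})\big]\lambda_1^{-1-h}\lambda_2^{-1-h}d\lambda_1 d\lambda_2.
\]
Expanding the integrand as $1-\phi_1(\lambda_1)-\phi_2(\lambda_2)+\EE[e^{-\lambda_1\mu_0(A_1)-\lambda_2\mu_0(A_2)}]$ (with $\phi_i(\lambda)=\EE[e^{-\lambda\mu_0^{(i)}(A_i)}]$, noting that the marginal laws coincide under $Y_0$ and $\tilde Y$ on each $A_i$) and applying Kahane's inequality to the jointly concave function $(m_1,m_2)\mapsto 1-e^{-\lambda_1 m_1-\lambda_2 m_2}$ upper-bounds the joint Laplace transform by $\EE_N[\phi_1(c\lambda_1)\phi_2(c\lambda_2)]$ with $c=\delta^d e^{\sqrt{2d}N}$. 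The change of variables $\lambda_i\mapsto c^{-1}\lambda_i$, combined with careful regrouping of the divergent marginal contributions against the bivariate term to extract the cancellations (which use $\EE[c]=1$), reduces the Mellin integral to a bound of the form $\EE_N[c^{2h}]\cdot\EE[\mu_0^{(1)}(A_1)^h]\EE[\mu_0^{(2)}(A_2)^h]$, and $\EE_N[c^{2h}]=\delta^{-2dh(2h-1)}$ yields the target $C_h\delta^{-dh}$ directly for $h\in[\tfrac12,\tfrac34]$. The remaining range $h\in(\tfrac34,1)$, where this Kahane bound is weaker than the target, requires a finer dyadic partition of $A_1,A_2$ at scale $\delta$, with the near ``corner'' pieces contributing the worst factor $\delta^{-dh}$ and the far dyadic pairs enjoying geometrically improving separation, summed to recover the sharp exponent.
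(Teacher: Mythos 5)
Your plan has a genuine gap at its core, for all $h$. The ``multidimensional form of Kahane's inequality'' you invoke for $h\in[0,\tfrac12)$ does not follow from joint concavity of $F(m_1,m_2)=m_1^h m_2^h$. The vector version of the interpolation formula \eqref{eq:Kahane_int} reads
\begin{align*}
\varphi'(t) = \frac12\sum_{i,j\in\{1,2\}}\int_{A_i}\int_{A_j}\bigl(\EE[Y_0(x)Y_0(y)]-\EE[\tilde Y(x)\tilde Y(y)]\bigr)\,\EE\!\left[e^{Z_t(x)+Z_t(y)-\cdots}\,\partial_{ij}F(W_1^t,W_2^t)\right]dx\,dy,
\end{align*}
and the sign of $\varphi'$ requires the sign of $\partial_{ij}F$ to match the sign of the covariance difference for \emph{every} pair $(i,j)$, not that the Hessian of $F$ be negative semi-definite. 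For $F=m_1^hm_2^h$ one has $\partial_{11}F,\partial_{22}F<0$ but $\partial_{12}F>0$; with your $\tilde Y$ dominating $Y_0$ the covariance difference is $\le 0$ everywhere, so the diagonal blocks $A_i\times A_i$ contribute a positive term to $\varphi'$ and the cross block $A_1\times A_2$ a negative one, and no conclusion is possible. (Joint concavity of a saddle-free bivariate function never implies all $\partial_{ij}F$ share a sign: compare $F(m_1,m_2)=-(m_1-m_2)^2/2$.) Worse, the direction you assert is the opposite of what scalar Kahane with a concave $F$ would suggest: larger covariance drives $\EE[F]$ \emph{down}, not up. The Mellin/dyadic plan for $h\ge\tfrac12$ inherits the same difficulty inside the bivariate-Laplace comparison and, in any case, the key cancellation and the ``finer dyadic partition'' step are not carried out.

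The paper sidesteps all of this with one essential trick that your plan misses: instead of comparing a bivariate functional of two GMC masses, rewrite $\mu_{0,\epsilon}(A_1)\mu_{0,\epsilon}(A_2)$ as a \emph{single} GMC mass on the product $A_1\times A_2\subset\RR^{2d}$ under the tensorized field $\mathbf Y_\epsilon(x,y)=Y_{0,\epsilon}(x)+Y_{0,\epsilon}(y)+N_K$ (the factor $e^{h(\sqrt{2d}N_K-d\EE[N_K^2])}$ is Cameron--Martin-ed in to effect the rewriting). Then $m\mapsto m^h$ is scalar concave for all $h<1$, and the scalar Kahane inequality applies against the decoupled field $\widetilde{\mathbf Y}_\epsilon(x,y)=Y_{0,\epsilon}(x)+\tilde Y_{0,\epsilon}(y)$; the domination $\EE[\mathbf Y_\epsilon\mathbf Y_\epsilon]\ge\EE[\widetilde{\mathbf Y}_\epsilon\widetilde{\mathbf Y}_\epsilon]$ holds for $K$ large precisely because the cross-covariances are uniformly bounded (which uses $\delta>0$), and the uniform prefactor $e^{d\,\EE[Y_{0,\epsilon}(x)Y_{0,\epsilon}(y)]}\lesssim\delta^{-d}$, raised to the $h$-th power, furnishes the $\delta^{-dh}$ in one step for the whole range $h\in[0,1)$.
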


\begin{proof}
Recall that the random variable $\mu_{0}(A_1) \mu_0(A_2)$ is the limit of
\begin{align*}
\left(\log \frac{1}{\epsilon}\right)\int_{A_1} \int_{A_2} e^{\sqrt{2d}\left(Y_{0, \epsilon}(x) + Y_{0, \epsilon}(y)\right) - d \left( \EE[Y_{0, \epsilon}(x)^2]+\EE[Y_{0, \epsilon}(x)^2]\right)}dxdy
\end{align*}

\noindent as $\epsilon \to 0^+$ where $Y_{0, \epsilon} = Y_0 \ast \theta_\epsilon$ for some smooth mollifier $\theta$ in the construction of critical GMCs. If we write $\mathbf{Y}_{\epsilon}(x, y) = Y_{0, \epsilon}(x) + Y_{0, \epsilon}(y) + N_K$ where $N_K$ is an independent $N(0, K)$ variable, then
\begin{align}
\notag & \EE\left[\mu_{0, \epsilon}(A_1)^h \mu_{0, \epsilon}(A_2)^h\right]
= \frac{1}{\EE\left[ e^{h \left(\sqrt{2d}N_k - d \EE[N_K^2] \right)}\right]} \EE\left[ e^{h \left(\sqrt{2d}N_k - d \EE[N_K^2] \right)}\mu_{0, \epsilon}(A_1)^h \mu_{0, \epsilon}(A_2)^h\right]\\
\label{eq:cross_cmp}
& =  C_K \left(\log \frac{1}{\epsilon}\right)^h \EE\left[\left(\int_{A_1} \int_{A_2}   e^{d\EE[Y_{0, \epsilon}(x) Y_{0, \epsilon}(y)]} e^{\sqrt{2d}\left(\mathbf{Y}_{\epsilon}(x, y)\right) - d\EE[\mathbf{Y}_{\epsilon}(x, y)^2]} dxdy\right)^h\right].
\end{align}

\noindent and the term $e^{d\EE[Y_{0, \epsilon}(x) Y_{0, \epsilon}(y)]}$ in the integral is bounded above uniformly in $x, y, \epsilon$ under the assumption that $\delta > 0$. We then introduce a new field $\widetilde{\mathbf{Y}}_{\epsilon}(x, y) = Y_{0, \epsilon}(x) +  \widetilde{Y}_{0, \epsilon}(y)$ where $\widetilde{Y}_{0, \epsilon}$ is an independent copy of $Y_{0, \epsilon}$. For $K>0$ sufficiently large, one can check that
\begin{align*}
\EE\left[ \mathbf{Y}_{\epsilon}(x_1, y_1) \mathbf{Y}_{\epsilon}(x_2, y_2)\right]
\ge \EE\left[ \widetilde{\mathbf{Y}}_{\epsilon}(x_1, y_1) \widetilde{\mathbf{Y}}_{\epsilon}(x_2, y_2)\right]
\end{align*}

\noindent Using \Cref{lem:Kahane}, we see that \eqref{eq:cross_cmp} is bounded by
\begin{align*}
& C_K e^{dh \sup_{(x, y) \in (A_1, A_2)} \EE[Y_{0, \epsilon}(x) Y_{0, \epsilon}(y)]}  \left(\log \frac{1}{\epsilon}\right)^h \EE\left[\left(\int_{A_1} \int_{A_2} e^{\sqrt{2d}\left(\widetilde{\mathbf{Y}}_{\epsilon}(x, y)\right) - d\EE[\widetilde{\mathbf{Y}}_{\epsilon}(x, y)^2]} dxdy\right)^h\right]\\
& \qquad \le C_K' \delta^{-dh} \EE\left[\mu_{0, \epsilon}(A_1)^h\right] \EE\left[ \mu_{0, \epsilon}(A_2)^h\right]
\end{align*}

\noindent and the above line remains finite as we pass it to the limit $\epsilon \to 0^+$, i.e.
\begin{align*}
\EE\left[\mu_{0}(A_1)^h \mu_{0}(A_2)^h \right] 
& \le C_K' \delta^{-dh} \liminf_{\epsilon \to 0^+} \EE\left[\mu_{0, \epsilon }(A_1)^h\right] \EE\left[ \mu_{0, \epsilon}(A_2)^h\right]
\le C_K'' \delta^{-dh}.
\end{align*}
\end{proof}

\begin{proof}[Proof of \Cref{lem:cross}]
It suffices to consider the cases where
\begin{align*}
A_1 = A_1^k(r) = [0, r]^d \qquad \text{and} \qquad A_2 = A_2^k(r) = [-r, 0]^{d-k} \times [0, r]^{k}
\end{align*}

\noindent for some fixed $4r \in (0, r_d)$ and each $k \in \{0, 1, \dots, d-1\}$. We shall call pairs of $d$-cubes of the form $(A_1^k(r), A_2^k(r))$ $k$-configurations of size $r$, and prove that 
\begin{align*}
\Ca_k(r) := \EE\left[\mu_{0}(A_1^k(r))^h \mu_{0}(A_2^k(r))^h \right] < \infty
\end{align*}

\noindent for every such $k$ by induction.

When $k = 0$, observe that $A_1^0(r) \cap A_2^0(r) = \{0\}$. we decompose $A_{i}^0(r)$ as follows:
\begin{align*}
A_1^0(r) &= [0, r/2]^d \cup \left([0,r]^d \setminus [0, r/2]^d\right)\\
&=: A_1^0(r 2^{-1}) \cup C_1^0(r),\\
A_2^0(r) & = [-r/2, 0]^d \cup \left([-r, 0]^d \setminus [-r/2, 0]^d\right)\\
&=: A_2^k(r 2^{-1}) \cup C_2^0(r).
\end{align*}

\noindent By the concavity of $x \mapsto x^h$ for $h \in (0, 1)$, we see that
\begin{align*}
& \EE\left[\mu_{0}(A_1^0(r))^h \mu_{0}(A_2^0(r))^h \right]
\le\EE\left[\mu_{0}(A_1^0(r2^{-1}))^h \mu_{0}(A_2^0(r2^{-1}))^h \right]\\
&\qquad  +\EE\left[\mu_{0}(A_1^0(r2^{-1}))^h \mu_{0}(C_2^0(r))^h \right]
+ \EE\left[\mu_{0}(A_2^0(r2^{-1}))^h \mu_{0}(C_1^0(r))^h \right]
+ \EE\left[\mu_{0}(C_1^0(r))^h \mu_{0}(C_2^0(r))^h \right]\\
& \qquad =: \EE\left[\mu_{0}(A_1^0(r2^{-1}))^h \mu_{0}(A_2^0(r2^{-1}))^h \right] + \Ra_{k=0}(r),
\end{align*}

\noindent i.e. we end up with an expectation associated to a $0$-configuration of size $r/2$ as well as residual terms $\Ra_{0} := \Ra_0(r)$ which is finite by \Cref{lem:cross_dis} since the cross moments only involve pairs of sets that are away from each other by at least $r/2$. We can iterate the above decomposition for $(A_1^0(r2^{-1}), A_2^0(r2^{-1}))$ so that one obtains $A_i^0(r2^{-n}) = A_i^0(r2^{-n-1}) \cup C_i^0(r2^{-n})$ for each $n \in \NN$, and obtain 
\begin{align}
\notag \Ca_0(r)
& \le \sum_{n \ge 0} \Big( \EE\left[\mu_{0}(A_1^0(r2^{-(n+1)}))^h \mu_{0}(C_2^0(r2^{-n}))^h \right]
+  \EE\left[\mu_{0}(A_2^0(r2^{-(n+1)}))^h \mu_{0}(C_1^0(r2^{-n}))^h \right]\\
\notag & \qquad +\EE\left[\mu_{0}(C_1^0(r2^{-n}))^h \mu_{0}(C_2^0(r2^{-n}))^h \right]\Big)\\
\label{eq:cross0_int} &=: \sum_{n \ge 0}\Ra_{0}(r2^{-n}).
\end{align}

\noindent Note that the sets involved in $\Ra_0(r2^{-n})$ are essentially those in $\Ra_0$ up to a scaling factor of $2^{-n}$. We then appeal to the scaling property of the exact field: if $S_1, S_2 \subset B(0, r_d)$ and $c \in (0,1)$, then
\begin{align*}
\mu(cS_1) \mu(cS_2) 
& \overset{d}{=} \left(c^d e^{\sqrt{2d}N_c - d \EE[N_c^2]} \mu_d(S_1)\right)\left(c^d e^{\sqrt{2d}N_c - d \EE[N_c^2]} \mu_d(S_2)\right)\\
& = \left(e^{\sqrt{2d}N_c - 2d \EE[N_c^2]}\right)^2 \mu_d(S_1)\mu_d(S_2)
\end{align*}

\noindent where $N_c \sim N(0, -\log c)$ is independent of $\mu_d$. This suggests that
\begin{align*}
\Ra_0(r2^{-n}) 
& = \Ra_0 \times \EE \left[ \left(e^{\sqrt{2d}N_{2^{-n}} - 2d \EE[N_{2^{-n}}^2]} \right)^{2h} \right]
= \Ra_0 2^{n (4dh^2 - 4dh)} 
= \Ra_0 2^{n \left[4d(h - \frac{1}{2})^2  - d\right]} 
\end{align*}

\noindent which is summable for $h \in (\frac{1}{2}, \frac{1}{2} + \frac{1}{2\sqrt{d}})$.

Now consider $k \ge 1$. We decompose our cubes $A_i^k$ as follows:
\begin{align*}
A_1^k(r) &= [0, r/2]^{d-k} \times [0, r]^k \cup \left([0,r]^d \setminus [0, r/2]^{d-k} \times [0, r]^k\right)\\
&=: \widehat{A}_1^k(r) \cup C_1^k(r),\\
A_2^k(r) & = [-r/2, 0]^{d-k} \times [0, r]^k \cup \left([-r, 0]^d \setminus [-r/2, 0]^{d-k} \times [0, r]^k\right)\\
&=: \widehat{A}_2^k(r) \cup C_2^k(r).
\end{align*}

\noindent This means that
\begin{align*}
& \EE \left[ \mu_0(A_1^k(r))^h \mu_0(A_2^k(r))^h \right]
\le \EE \left[ \mu_0(\widehat{A}_1^k(r))^h \mu_0(\widehat{A}_2^k(r))^h \right]\\
& \qquad + \EE \left[ \mu_0(\widehat{A}_1^k(r))^h \mu_0(C_2^k(r))^h \right]
+ \EE \left[ \mu_0(\widehat{A}_2^k(r))^h \mu_0(C_1^k(r))^h \right]
+ \EE \left[ \mu_0(C_1^k(r))^h \mu_0(C_2^k(r))^h \right]\\
& \qquad =: \EE \left[ \mu_0(\widehat{A}_1^k(r))^h \mu_0(\widehat{A}_2^k(r))^h \right] + \Ra_k(r).
\end{align*}

\noindent Again we have $\Ra_k := \Ra_k(r) < \infty$ by \Cref{lem:cross_dis} because it only involves cross moments associated to pairs of sets separated by a distance of at least $r/2$. 

Now in order to use the scaling property of $\mu_d$, we have to further decompose $\widehat{A}_i^k(r)$ into $d$-cubes of length $r/2$, i.e. we have to do further partitioning\footnote{Our $d$-cubes will always be closed and so this is not a partition strictly speaking, but only the boundary may be double-counted at most countably many times which is negligible with respect to the Lebesgue/GMC measure.} with respect to each of the last $k$ coordinates depending on whether it lies in $[0, r/2]$ or $[r/2, 0]$. Pick two such sub-cubes, one from the partitioning of $\widehat{A}_1^k(r)$ and the other from that of $\widehat{A}_2^k(r)$. The pair gives a $j$-configuration of size $r/2$, where $j \in \{0, 1, \dots, k\}$ depending on how many sides the cubes share with each other, and there are exactly $2^k \binom{k}{j}$ $j$-configurations. Therefore, 
\begin{align*}
\Ca_k(r) = \EE \left[ \mu_0(A_1^k(r))^h \mu_0(A_2^k(r))^h \right]
& \le \Ra_k(r) + 2^k \sum_{j=0}^k \binom{k}{j} \Ca_j(r2^{-1})\\
& \le 2^k \Ca_k(r2^{-1}) + \Ra_k(r) + 2^k \sum_{j=0}^{k-1} \binom{k}{j} \Ca_j(r2^{-1}).
\end{align*}

\noindent (Recall that $\Ca_j(r2^{-1})$ is the cross moment associated to a $j$-configuration of size $r/2$.) The decomposition can be repeated and we obtain the bound
\begin{align*}
\Ca_k(r)
& \le \sum_{n \ge 0} 2^{nk} \left(\Ra_k(r2^{-n}) + 2^k \sum_{j=0}^{k-1} \binom{k}{j} \Ca_j(r2^{-(n+1)})\right)\\
& = \left(\Ra_k(r) + 2^k \sum_{j=0}^{k-1} \binom{k}{j} \Ca_j(r2^{-1})\right) \sum_{n \ge 0} 2^{nk} 2^{n \left[4d(h - \frac{1}{2})^2  - d\right]} 
\end{align*}

\noindent from the same scaling consideration. By induction hypothesis, the multiplicative factor in front of the sum is finite, while the summand $2^{n \left[4d(h - \frac{1}{2})^2  - (d-k)\right]}$ is summable provided that $h \in (\frac{1}{2}, \frac{1}{2} + \frac{1}{2\sqrt{d}})$ since $k \le d-1$. This concludes the proof.
\end{proof}

\section{Reference log-correlated Gaussian field}\label{app:ref}
For the purpose of computing the constant $\overline{C}_d$ in \Cref{subsec:eval} we introduced the reference field $X_d(\cdot)$ for $d \ge 2$. This appendix is devoted to the construction of this Gaussian field and the existence of the corresponding critical GMC.

\subsection{Construction of reference field $X_d$}
Let $L>0$ be such that $r_d(L) > 1$, and consider the $L$-exact field $Y_L(\cdot)$ on $B(0, 1)$. Inspired by the $d=2$ case we consider spherical averages of $Y_L$.

\begin{lem}\label{lem:Xdcont}
For $0 \ne x \in B(0, 1)$, we define
\begin{align*}
\overline{Y}_L(x) = \int Y_L(u) \sigma_{|x|}(du)
\end{align*}

\noindent where $\sigma_s(du)$ is the uniform measure on the $(d-1)$-sphere $\mathbb{S}^{d-1}(s)$ of radius $s$. Then the centred Gaussian process $\overline{Y}_L(\cdot)$ satisfies
\begin{align}\label{eq:Ybar}
& \EE \left[\overline{Y}_L(x) \overline{Y}_L(y)\right] = - \log |x| \vee |y|  + L + S_d(x, y)
\end{align}

\noindent where
\begin{align*}
S_d(x, y) = -\frac{|\mathbb{S}^{d-2}|}{2|\mathbb{S}^{d-1}|} \int_{-1}^1 (1-u^2)^{\frac{d-2}{2}}\log |1 -2cu + c^2|  du,
\qquad c = \frac{|x|}{|y|} \wedge \frac{|y|}{|x|}.
\end{align*}

\noindent In particular, the process $(\overline{Y}_L(e^{-t}))_{t \ge 0}$ is continuous almost surely.
\end{lem}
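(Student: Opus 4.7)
The plan is to construct $\overline{Y}_L$ as a pairing of the log-correlated field $Y_L$ with sufficiently regular measures, compute its covariance by Fubini combined with rotational invariance, and then deduce continuity of $t \mapsto \overline{Y}_L(e^{-t})$ from the enhanced Kolmogorov criterion of \Cref{lem:contGP}.

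Although $Y_L$ is not defined pointwise, the uniform probability measure $\sigma_{|x|}$ on $\mathbb{S}^{d-1}(|x|)$ has finite logarithmic energy (since the sphere has codimension one in $\RR^d$), so the pairing $\overline{Y}_L(x) := \langle Y_L, \sigma_{|x|} \rangle$ is a well-defined centred Gaussian random variable for each $x \ne 0$; equivalently, it may be realised as the $L^2$-limit of $\overline{Y}_{L,\epsilon}(x) = \int Y_{L,\epsilon}(u)\,\sigma_{|x|}(du)$ as $\epsilon \to 0^+$. Its covariance follows by Fubini applied to the mollified versions and then passing to the limit:
\begin{align*}
\EE\bigl[\overline{Y}_L(x)\overline{Y}_L(y)\bigr] = L - \int\!\!\int \log|u-v|\,\sigma_{|x|}(du)\,\sigma_{|y|}(dv).
\end{align*}
Assuming $|x| \le |y|$ and setting $c = |x|/|y|$, I use the rotational invariance of $\sigma_{|y|}$ to reduce to $v = |y|e_1$; writing $|u-v|^2 = |y|^2(1-2c\cos\theta + c^2)$ with $\theta$ the angle between $u$ and $e_1$, and pushing $\sigma_{|x|}$ forward to the variable $u = \cos\theta \in [-1,1]$ with the appropriate spherical-coordinate weight, one recovers after routine simplification
\begin{align*}
\EE\bigl[\overline{Y}_L(x)\overline{Y}_L(y)\bigr] = -\log|y| + L + S_d(x,y),
\end{align*}
which is the advertised formula.

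For the continuity of $t \mapsto \overline{Y}_L(e^{-t})$ I apply \Cref{lem:contGP}. Writing $\psi(c) := S_d(x,y)\bigl|_{|x|/|y|=c}$ viewed as a function of $c \in [0,1]$, the covariance formula yields for $0 \le s \le t$
\begin{align*}
\EE\bigl[(\overline{Y}_L(e^{-t}) - \overline{Y}_L(e^{-s}))^2\bigr] = (t-s) + 2\bigl[\psi(1) - \psi(e^{-(t-s)})\bigr].
\end{align*}
The value $\psi(1)$ is finite because the integrand $(1-u^2)^{(d-2)/2}\log(2-2u)$ has only an integrable logarithmic singularity at $u=1$ for $d \ge 2$. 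Differentiating under the integral sign and using the pointwise bound $1-2cu+c^2 \ge 2c(1-u)$ for $u<1$, one sees that $|\psi'(c)|$ is uniformly bounded on $[1/2,1]$ (trivially when $d=2$, since $\psi \equiv 0$, and via integrability of $(1-u)^{(d-4)/2}(1+u)^{(d-2)/2}$ when $d \ge 3$). Hence $\psi$ is Lipschitz in a left neighbourhood of $1$, whence $\EE[(\overline{Y}_L(e^{-t})-\overline{Y}_L(e^{-s}))^2]^{1/2} = O(|t-s|^{1/2})$, and \Cref{lem:contGP} delivers a.s.\ $\alpha$-H\"older continuity for every $\alpha < 1/2$.

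The only delicate point is the behaviour of $\psi$ at $c=1$, where the two spheres coalesce and the logarithmic kernel $\log|u-v|$ approaches its diagonal singularity; but codimension-one surface measure keeps this singularity integrable, and the explicit formula for $\psi'$ renders the required Lipschitz bound transparent. Everything else (Fubini, rotational invariance, the spherical-coordinate change of variables) is routine.
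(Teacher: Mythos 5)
Your proof is correct and follows essentially the same route as the paper: compute the covariance by Fubini and rotational invariance, then show $\EE\bigl[(\overline{Y}_L(e^{-t})-\overline{Y}_L(e^{-s}))^2\bigr]=O(|t-s|)$ by a Lipschitz estimate on $c\mapsto S_d(c,1)$ (using the scale invariance $S_d(ax,ay)=S_d(x,y)$), and conclude via \Cref{lem:contGP}. The only cosmetic difference is the elementary lower bound on $1-2cu+c^2$ used to control the derivative (you use $1-2cu+c^2\ge 2c(1-u)$, the paper uses $(c-u)^2+(1-u^2)\ge 2|c-u|\sqrt{1-u^2}$), both yielding the needed integrability.
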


\begin{proof}
By definition,
\begin{align*}
\EE \left[\overline{Y}_L(x) \overline{Y}_L(y)\right]
& = \int \int \EE\left[Y_L(u) Y_L(v)\right] \sigma_{|x|}(du) \sigma_{|y|}(dv)\\
& = L - \int \int \log |u-v| \sigma_{|x|}(du) \sigma_{|y|}(dv).
\end{align*}

Suppose $|x| \ge |y|$. By rotational invariance, it is not difficult to see that
\begin{align*}
\int \int \log |u-v| \sigma_{|x|}(du) \sigma_{|y|}(dv)
& = \int \log ||x|u-|y|e_1| \sigma_{1}(du)\\
& = \log |x| + \int \log |u-ce_1| \sigma_{1}(du), \qquad c = \frac{|y|}{|x|}.
\end{align*}

\noindent where $e_1 = (1, 0, \dots, 0)^T$. But then
\begin{align}
\notag 
\int \log |u-ce_1| \sigma_{1}(du)
& = \frac{1}{2} \int \log \left| (u_1 - c)^2 + \sum_{i \ge 2} u_i^2 \right| \sigma_1(du)\\
\label{eq:Sdcal}
& = \frac{1}{2|\mathbb{S}^{d-1}|} \int_{-1}^1 \int \log |1 - 2cu_1 + c^2|\tilde{\sigma}_{\sqrt{1-u_1^2}}(du_{-1}) du_1
\end{align}

\noindent where $\tilde{\sigma}_{\sqrt{1-u_1^2}}(du_{-1})$ is the area measure on the $(d-2)$-sphere of radius $\sqrt{1-u_1^2}$ and with coordinates $(u_2, u_3, \dots, u_d)$. This shows that \eqref{eq:Sdcal} is equal to $-S_d(x, y)$, and we have verified the covariance formula \eqref{eq:Ybar}.

We now proceed to the claim of continuity, and consider only $d \ge 3$ since it is well-known that $S_2(x, y) \equiv 0$ in which case $(\overline{Y}_L(e^{-t})_{t \ge 0}$ is a Brownian motion (starting from an independent random position $\sim N(0, L)$).

Let $t \ge s \ge 0$. We have
\begin{align*}
\EE \left[\left(\overline{Y}_L(e^{-t}) - \overline{Y}_L(e^{-s})\right)^2\right]
& = \EE \left[\overline{Y}_L(e^{-t})^2\right] + \EE \left[\overline{Y}_L(e^{-s})^2\right] - 2\EE \left[\overline{Y}_L(e^{-t})\overline{Y}_L(e^{-s})\right]\\
& = (t - s) + S_d(e^{-t}, e^{-t}) + S_d(e^{-s}, e^{-s}) - 2S_d(e^{-t}, e^{-s})
\end{align*}

\noindent where
\begin{align*}
& S_d(e^{-t}, e^{-t}) + S_d(e^{-s}, e^{-s}) - 2S_d(e^{-t}, e^{-s})
= 2\left|S_d(1, 1) - 2S_d(e^{-(t-s)}, 1) \right|\\
& \qquad= \frac{|\mathbb{S}^{d-2}|}{|\mathbb{S}^{d-1}|}  \left|\int_{e^{-(t-s)}}^1 \int_{-1}^1 (1-u^2)^{\frac{d-2}{2}}\frac{2(c-u)}{1 -2cu + c^2} du dc\right|\\
& \qquad\le  \frac{|\mathbb{S}^{d-2}|}{|\mathbb{S}^{d-1}|}  \int_{e^{-(t-s)}}^1 \int_{-1}^1 (1-u^2)^{\frac{d-2}{2}}\frac{|2(c-u)|}{(c-u)^2 + 1-u^2} du dc\\
& \qquad \le 4 \frac{|\mathbb{S}^{d-2}|}{|\mathbb{S}^{d-1}|} \int_{e^{-(t-s)}}^1 \int_{-1}^1 (1-u^2)^{-\frac{1}{2}}du dc
\le C (1 - e^{-(t-s)})
\le C(t-s).
\end{align*}

\noindent Therefore $\EE \left[\left(\overline{Y}_L(e^{-t}) - \overline{Y}_L(e^{-s})\right)^2\right] \le (C+1)(t-s)$, and by \Cref{lem:contGP} the Gaussian process $(\overline{Y}_L(e^{-t}))_{t \ge 0}$ is  H\"older continuous.

\end{proof}

We now continue with the construction of our reference field. By a straightforward covariance computation (using rotational symmetry), we see that the Gaussian field
\begin{align}\label{eq:Yhat}
\widehat{Y}_d(\cdot) := Y_L(\cdot) - \overline{Y}_L(\cdot)
\end{align} 

\noindent is independent of $\overline{Y}_L(\cdot)$ and satisfies
\begin{align*}
\EE\left[\widehat{Y}_d(x)\widehat{Y}_d(y)\right] = \log \frac{|x| \vee |y|}{|x-y|} - S_d(x, y)
\end{align*}

\noindent which is scale invariant. In particular $\widehat{Y}_d(\cdot)$ does not depend on $L$ and may be defined on Euclidean balls of arbitrary size.

If $(B_t)_{t \ge 0}$ is an independent standard Brownian motion, we can write $\overline{X}(x) := B_{-\log |x|}$ and define our reference field by $X_d(\cdot) = \overline{X}(x) + \widehat{Y}_d(\cdot)$ which is a centred Gaussian field on the unit ball with
\begin{align*}
\EE[X_d(x) X_d(y)]
&= \EE\left[\overline{X}(x) \overline{X}(y)\right] +  \EE\left[\widehat{Y}_d(x) \widehat{Y}_d(y)\right]
= - \log |x-y| - S_d(x, y).
\end{align*}

\subsection{Existence of associated critical GMC $\mu_d$} \label{app:existsGMC}
We would like to argue that the sequence of measures
\begin{align*}
\mu_{d, \epsilon}(dx) := \left(\log \frac{1}{\epsilon}\right)^{1/2} e^{\sqrt{2d}X_{d, \epsilon}(x) - d \EE[X_{d, \epsilon}(x)^2]}dx,
\end{align*}

\noindent where $X_{d, \epsilon}(x) = X_d \ast \theta_{\epsilon}(x)$, converges in probability to some measure $\mu_d$ in the space of Radon measures equipped with the weak$^*$ topology. To do so, we first show that the claim of convergence is true  on any subset $D_n = D \setminus B(0, \kappa_n)$ for some sequence of $\kappa_n > 0$ tending to $0$ as $n \to \infty$. Pick $L > 0$ sufficiently large such that $r_d(L) > 1$. Using the construction of our reference field, we have
\begin{align*}
X_d(\cdot) = Y_L(\cdot) - \overline{Y}_L(\cdot) + \overline{X}(\cdot) \qquad a.s.
\end{align*}

\noindent and so
\begin{align*}
\mu_{d, \epsilon}(dx) 
& = e^{\sqrt{2d} (\overline{X}_\epsilon(x) - \overline{Y}_{L, \epsilon}(x))}
e^{- d\EE[\overline{X}_{\epsilon}(x)^2 + \widehat{Y}_{d, \epsilon}(x)^2 - Y_{L, \epsilon}(x)^2]} 
\left(\log \frac{1}{\epsilon}\right)^{1/2} e^{\sqrt{2d} Y_{L, \epsilon}(x) - d \EE[Y_{L, \epsilon}(x)^2]}dx\\
&= e^{\sqrt{2d} (\overline{X}_\epsilon(x) - \overline{Y}_{L, \epsilon}(x))}
e^{- d\EE[\overline{X}_{\epsilon}(x)^2 + \widehat{Y}_{d, \epsilon}(x)^2 - Y_{L, \epsilon}(x)^2]} 
\mu_{L, \epsilon}(dx).
\end{align*}

\noindent As $\epsilon \to 0^+$, $\mu_{L, \epsilon}$ converges in probability to the critical GMC $\mu_L$ associated to the $L$-exact field $Y_L$ on $D$. If we restrict ourselves to $\overline{D}_n$, we see that $\overline{X}(\cdot)$ and $\overline{Y}_L(\cdot)$ are H\"older continuous functions, and so $e^{\sqrt{2d} (\overline{X}_\epsilon(x) - \overline{Y}_{L, \epsilon}(x))}$ converges uniformly to $e^{\sqrt{2d} (\overline{X}(x) - \overline{Y}_L(x))}$ almost surely. Also, it is easy to check that
\begin{align*}
\EE[\overline{X}_{\epsilon}(x)^2 + \widehat{Y}_{d, \epsilon}(x)^2 - Y_{L, \epsilon}(x)^2]
= -\int \int \theta(u) \theta(v) S_d(x+\epsilon u, x+\epsilon v) du dv
\end{align*}

\noindent where $S_d(x, y)$ is also a H\"older continuous on $\overline{D}_n$ by \Cref{lem:Xdcont}, and so $e^{- d\EE[\overline{X}_{\epsilon}(x)^2 + \widehat{Y}_{d, \epsilon}(x)^2 - Y_{L, \epsilon}(x)^2]}$ converges uniformly to $e^{d S_d(x, x)}$. Combining all these considerations, we see that $\mu_{d, \epsilon}(dx)$ converges to a critical GMC $\mu_d^n(dx)$ on $D_n$ in probability as $\epsilon \to 0^+$.

Next, we extend the definition of each $\mu_d^n$ to the entire domain $D$ by defining $\mu_d^n(B(0, \kappa_n)) = 0$. This gives us a non-decreasing sequence of measures $\mu_d^1 \le \mu_d^2 \le ..$ on $D$ with the property that $\mu_d^n|_{D_m} = \mu_d^m$ for any $m \le n$. We argue that the sequence of measures $\{\mu_d^i\}_{i \ge 1}$ is tight: if we cover $D$ by finitely many balls $A_i$ of radius at most $ r_d(-L)$, then for $q \in (0, 1)$
\begin{align*}
\EE \left[ \left(\lim_{n \to \infty} \mu_d^n(D)\right)^q \right]
\le \sum_i \EE \left[\left(\lim_{n \to \infty} \mu_d^n(A_i)\right)^q \right]
& = \sum_{i} \lim_{n \to \infty} \EE \left[\mu_d^n(A_i)^q \right]\\
& \le \sum_{i} \EE \left[ \mu_{-L}(B(0, r_d(-L)))^q \right] < \infty
\end{align*}

\noindent where we have used Gaussian comparison \eqref{eq:Gcomp} in the second inequality (assuming that $L > ||S_d||_{\infty}$), i.e. the total mass of $\mu_d^n(D)$ is uniformly bounded in $n$ almost surely. From this we obtain by Prokhorov's theorem that $\{\mu_d^i\}_i$ is relatively compact and so there exists a subsequence $\{n_k\}_k$ along which $\mu_d^{n_k}$ converges weakly to some random measure $\mu_d$ almost surely. However, by splitting any $g \in C_b(D)$ into positive and negative parts $g = g_+ - g_-$, we see that
\begin{align*}
\mu_d(g) = \mu_d(g_+) - \mu_d(g_-) 
= \lim_{n \to \infty} \left[\mu_d^n(g_+) - \mu_d^n(g_-)\right]
= \lim_{n \to \infty} \mu_d^n(g)
\end{align*}

\noindent by monotone convergence, i.e. $\mu_d^n$ converges weakly to $\mu_d$ almost surely.

Finally we show that $\mu_d$ is indeed the weak$^*$ limit of $\mu_{d, \epsilon}$ on the whole domain $D$ in probability, and by standard argument (see e.g. \cite[Chapter 4]{Kal2017}) it suffices to check that for any fixed $g \in C_b(D)$,
\begin{align*}
\mu_{d, \epsilon}(g) \xrightarrow{\epsilon \to 0^+} \mu_d(g) \qquad \text{in probability}.
\end{align*}

\noindent For this, consider for any $\delta > 0$
\begin{align*}
\PP\left( |\mu_{d, \epsilon}(g) - \mu_{d}(g)| > \delta\right)
& \le \PP\left( |\mu_{d, \epsilon}(g1_{D_n}) - \mu^n_{d}(g)| > \frac{\delta}{3}\right) \\
& \qquad + \PP\left( ||g||_\infty \mu_{d, \epsilon}(D \setminus D_n) > \frac{\delta}{3} \right)
+ \PP\left( ||g||_\infty \mu_{d}(D \setminus D_n) > \frac{\delta}{3} \right).
\end{align*}

\noindent The first term on the RHS converges to $0$ as $\epsilon \to 0^+$ as $\mu_{d, \epsilon}$ converges in probability to $\mu_d^n$ on $D_n$. For the second term, we have for $q \in (0, 1)$
\begin{align*}
\PP\left( ||g||_\infty\mu_{d, \epsilon}(D \setminus D_n) > \frac{\delta}{3} \right)
& \le \left(\frac{3||g||_\infty}{\delta}\right)^q \EE \left[ \mu_{d, \epsilon}(B(0, \kappa_n))^q \right]\\
& \le \left(\frac{3||g||_\infty}{\delta}\right)^q \EE \left[ \mu_{-L, \epsilon}(B(0, \kappa_n))^q \right]
\end{align*}

\noindent where the last inequality again follows from Gaussian comparison. The third term may be bounded similarly and overall we have
\begin{align*}
\limsup_{\epsilon \to 0^+} \PP\left( |\mu_{d, \epsilon}(g) - \mu_{d}(g)| > \delta\right)
\le 2 \left(\frac{3||g||_\infty}{\delta}\right)^q \limsup_{\epsilon \to 0^+} \EE \left[ \mu_{-L, \epsilon}(B(0, \kappa_n))^q \right].
\end{align*}

\noindent Since $n$ is arbitrary, we let $n \to \infty$ or equivalently $\kappa_n \to 0^+$ to make the above bound arbitrarily small (by \Cref{lem:GMC_moment}(ii)) and conclude that $\mu_{d, \epsilon}$ converges in probability to $\mu_d$ on $D$ in the weak$^*$ topology.

\section{Fusion estimates} \label{app:fusion}
This appendix is devoted to \Cref{lem:fusion}. We first explain the main ideas that are inspired by earlier works \cite{DKRV2017, BW2018} on fusion estimates of GMCs, and then sketch the technical estimates and arguments for the proof of \Cref{lem:fusion}.

\subsection{Main idea: exponential functional of Brownian motion}\label{subsec:fusion_idea}
By construction, $X_d$ has the decomposition
\begin{align*}
X_{d}(x) = \overline{X}(x) + \widehat{Y}_d(x)
\end{align*}

\noindent where $(B_t)_{t \ge 0} = \overline{X}(e^{-t})_{t \ge 0}$ is a standard Brownian motion. This is also translated into
\begin{align}\label{eq:ref_dec}
X_{d, \epsilon}(x)
= \overline{X}_{\epsilon} (x) + \widehat{Y}_{d, \epsilon}(x)
\qquad \text{and} \qquad B_{\epsilon, t} := \overline{X}_{\epsilon}(e^{-t})
\end{align}

\noindent where $\overline{X}_{\epsilon}$ and $\widehat{Y}_{d, \epsilon}$ are defined analogously.

Now recall the definition of $\mu_{d, \epsilon}(v, A)$ from \eqref{eq:lim_fusion2}. We shall split this random variable into three terms:
\begin{align*}
\mu_{d, \epsilon}(v, r)
& = \left(\log \frac{1}{\epsilon}\right)^{1/2} \int_{\{2\epsilon \le |x-v| \le r\} \cap A} \frac{e^{\sqrt{2d} X_{d, \epsilon}(x-v) - d \EE\left[X_{d, \epsilon}(x-v)^2\right]}}{\left(|x-v|\vee \epsilon\right)^{2d}} dx\\
& = e^{\sqrt{2d} B_{\epsilon, -\log r}}\int_{-\log r}^{-\log 2\epsilon} e^{\sqrt{2d} (B_{\epsilon, s}-B_{\epsilon, -\log r})}Z_{d, \epsilon}^{A, v}(ds),\\
\mu_{d, \epsilon}^c(v)
& = \left(\log \frac{1}{\epsilon}\right)^{1/2} \int_{\{|x-v| \le 2\epsilon\} \cap A} \frac{e^{\sqrt{2d} X_{d, \epsilon}(x-v) - d \EE\left[X_{d, \epsilon}(x-v)^2\right]}}{\left(|x-v|\vee \epsilon\right)^{2d}} dx\\
& \le \left(\log \frac{1}{\epsilon}\right)^{1/2} \epsilon^{-2d} \int_{\{|x-v| \le 2\epsilon\} \cap A} e^{\sqrt{2d} X_{d, \epsilon}(x-v) - d \EE\left[X_{d, \epsilon}(x-v)^2\right]} dx\\
& = 2^{2d} e^{\sqrt{2d} B_{\epsilon, -\log 2\epsilon}} \underbrace{\int_{-\log 2\epsilon}^{\infty} e^{\sqrt{2d} [(B_{\epsilon, s} - B_{\epsilon, -\log 2\epsilon}) - (s + \log 2\epsilon)]}Z_{d, \epsilon}^{A, v}(ds)}_{=:\overline{\mu}_{d, \epsilon}^c(v)},\\
R_{d, \epsilon}(v, r) 
& = \left(\log \frac{1}{\epsilon}\right)^{1/2} \int_{\{ |x-v| > r\} \cap A} \frac{e^{\sqrt{2d} X_{d, \epsilon}(x-v) - d \EE\left[X_{d, \epsilon}(x-v)^2\right]}}{\left(|x-v|\vee \epsilon\right)^{2d}} dx,\\
\end{align*}

\noindent where
\begin{align*}
Z_{d, \epsilon}^{A, v}(ds)
= \left(\log \frac{1}{\epsilon}\right)^{1/2} \int_{\mathbb{S}^{d-1}} 1_A(v + e^{-s}x) e^{\sqrt{2d} \widehat{Y}_{d, \epsilon}(e^{-s} u) - d\EE\left[\widehat{Y}_{d, \epsilon}(e^{-s}u)^2\right]} \sigma_1(du) ds.
\end{align*}

\noindent As $\epsilon \to 0^+$, we see that
\begin{itemize}
\item $R_{d, \epsilon}(v, r)$ converges to some finite random variables $R_{d}(v, r)$ by the construction of the critical GMC $\mu_d$ associated to our reference field $X_d$;
\item $Z_{d, \epsilon}^{A, v}(ds)$ converges to some $Z_{d}^{A,v}(ds)$ which is the critical GMC associated to $\widehat{Y}_d$ (which exists based on arguments similar to that in \Cref{app:existsGMC}), expressed in terms of spherical coordinates with all the angular coordinates marginalised out;
\item $\overline{\mu}_{d, \epsilon}^c(v, r)$, which appears in the definition of $\mu_{d, \epsilon}^c(v)$, is essentially the mass of $\{|x-v| \le 2\epsilon \}$ normalised to order $1$ (by taking out all the extra factors after applying the substitution $\epsilon u = x-v$ and the scale invariance of $\widehat{Y}_d(\cdot)$).
\end{itemize}

Therefore, to get the idea of how $\EE\left[ e^{-\lambda \mu_{d, \epsilon}(v, A)}\right]$ behaves as $\epsilon \to 0^+$, we may consider the toy model
\begin{align}\label{eq:toy}
\EE\left[ e^{-\lambda (U_t+ V_t + W)}\right]
\end{align}

\noindent with $U_t = \int_0^t e^{\sqrt{2d} B_s} ds$, $V_t = e^{\sqrt{2d}B_t} V$ where $(V, W)$ are some finite independent random variables, $(B_t)_{t \ge 0}$ is a standard Brownian motion, and $t = -\log 2\epsilon \to \infty$. The tuple $(U_t, V_t, W)$ should be seen as the toy version of $\left(\mu_{d, \epsilon}(v, r), \overline{\mu}_{d, \epsilon}^c(v), R_{d, \epsilon}(v, r)\right)$.

Similar to the observation in \cite{DKRV2017, BW2018}, it happens that the leading order contribution to \eqref{eq:toy} as $t \to \infty$ comes from the event
\begin{align*}
\left\{ \sup_{s \le t} B_s = O(1)\right\},
\end{align*}

\noindent the probability of which is of order $1 / \sqrt{t}$, explaining the renormalisation factor $\left(\log 1/\epsilon \right)^{1/2}$ appearing on the LHS of \eqref{eq:fusion}. On this event, it is not difficult to check that the terminal value $B_t$ of the Brownian motion is extremely negative (it is less than $-t^{\frac{1}{2}-}$ with high probability) and so $V_t$ vanishes in the limit.

For $U_t$ we need finer description of the Brownian path. When $\sup_{s \le t} B_s = x \in \RR_+$, it happens that the behaviour of $(B_s)_{s \le t}$ is very similar to the following:
\begin{itemize}
\item First, it evolves like a standard Brownian motion until $s = T_{x} = \inf\{u > 0: B_u = x\}$ (which is $o(t)$ with high probability) when it reaches its maximum value.
\item It then evolves like $B_s = x - \beta_{s - T_x}$, where $(\beta_s)_{s \ge 0}$ is an independent $\mathrm{BES}_0(3)$-process (hence explaining why $B_t$ is extremely negative).
\end{itemize}

\noindent If one further applies \Cref{lem:time_rev}, one sees that
\begin{align*}
U_t
& = \int_0^t e^{\sqrt{2d}B_s}ds
\approx e^{\sqrt{2d}x} \left[\int_0^{T_x} e^{-\sqrt{2d}(B_s - x)} ds + \int_{T_x}^t e^{-\sqrt{2d}\beta_{s - T_x}}ds\right]\\
& \xrightarrow[t \to \infty]{d} e^{\sqrt{2d}x} \int_{-L_{x, -}}^\infty e^{-\sqrt{2d}\beta_s} ds
\end{align*}

\noindent where $(\beta_{-s})_{s \ge 0}$ is an independent $\mathrm{BES}_0(3)$-process and $L_{x, -} := \sup \{s > 0: \beta_{-s} = x\}$. Finally, the maximum value $x$ attained by the Brownian motion is asymptotically ``uniformly" distributed, and we obtain
\begin{align*}
\EE\left[ e^{-\lambda (U_t+ V_t + W)}\right]
\overset{t \to \infty}{\sim}
\sqrt{\frac{2}{\pi t}} \int_0^\infty \EE \left[ \exp \left(- \lambda\left( e^{\sqrt{2d}x} \int_{-L_{x, -}}^\infty e^{-\sqrt{2d}\beta_s} ds + W \right)\right)\right] dx
\end{align*}

\noindent which is of the same form as \eqref{eq:fusion}.

\subsection{Some estimates}
Let us collect a few estimates that will be used in the proof of \Cref{lem:fusion}.

Suppose $X_{d, \epsilon}(\cdot) := X_{d} \ast \theta_{\epsilon}(\cdot)$ where $\theta$ is a radially-symmetric mollifier supported on $B(0,1)$ without loss of generality. We have the following estimate controlling the difference between our Brownian motion $\overline{X}(e^{-t})$ and its approximation $\overline{X}_{\epsilon}(e^{-t})$.

\begin{lem}\label{lem:BM_eps}
Fix $\delta \in (0,1)$. There exists a sequence of random variables $\widetilde{C}_{\epsilon}$ such that
\begin{align*}
\sup_{t \le (-\log \epsilon)^{\delta}} |\overline{X}_{\epsilon}(e^{-t}) - \overline{X}(e^{-t})|
= \sup_{t \le (-\log \epsilon)^{\delta}} |B_{\epsilon, t} - B_t| 
\le \widetilde{C}_{\epsilon}
\end{align*}

\noindent and $\EE\left[\exp \left(a \widetilde{C}_{\epsilon}\right)\right] \xrightarrow{\epsilon \to 0^+} 1$ for any $a = O(\epsilon^{-1/8})$. In particular 
\begin{align*}
\lim_{\epsilon \to 0^+} \PP \left( \widetilde{C}_{\epsilon} > \epsilon^{\frac{1}{16}} \right) = 0.
\end{align*}
\end{lem}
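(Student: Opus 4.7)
Write $L := -\log \epsilon$. The plan is first to express the difference $B_{\epsilon,t} - B_t$ as a small Brownian increment via radial symmetry, then to reduce the required uniform bound to the H\"older modulus of a rescaled Brownian motion on the unit interval, where Lemma~\ref{lem:contGP} supplies a random H\"older constant with all exponential moments finite. Finally we balance exponents to get the desired MGF control.

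Since $\overline{X}(x) = B_{-\log|x|}$ is radial and $\theta$ is assumed radially symmetric, $\overline{X}_\epsilon$ is also radial, so for any fixed unit vector $x_0$,
\begin{equation*}
B_{\epsilon,t} - B_t = \int \bigl(B_{\rho_\epsilon(u,t)} - B_t\bigr)\,\theta(u)\,du, \qquad \rho_\epsilon(u,t) := -\log|e^{-t}x_0 - \epsilon u|.
\end{equation*}
Taylor expansion of $\log|e^{-t}x_0 - \epsilon u|^2 = -2t + \log\bigl(1 - 2\epsilon e^t (x_0 \cdot u) + \epsilon^2 e^{2t}|u|^2\bigr)$ for $u \in \mathrm{supp}(\theta)$ gives $|\rho_\epsilon(u,t) - t| \le C_0 \,\epsilon e^t$ as soon as $\epsilon e^t \le 1/2$, a condition automatically satisfied for $t \le L^\delta$ and $\epsilon$ small since $\epsilon e^t \le e^{L^\delta - L} \to 0$ when $\delta < 1$. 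Therefore
\begin{equation*}
\sup_{t \le L^\delta}|B_{\epsilon,t} - B_t| \;\le\; \sup_{\substack{s, s' \in [0, L^\delta + 1] \\ |s-s'| \le C_0 \epsilon e^{L^\delta}}} |B_s - B_{s'}|.
\end{equation*}

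To bound the right-hand side uniformly in $\epsilon$, I would use Brownian scaling: setting $T_\epsilon := L^\delta + 1$, the rescaled process $\tilde B_s := T_\epsilon^{-1/2} B_{s T_\epsilon}$ is a standard Brownian motion on $[0,1]$. Applying Lemma~\ref{lem:contGP} with $H = 1/2$ to $\tilde B$ provides, for any fixed $\alpha \in (0, 1/2)$, a random variable $C_\alpha$ whose distribution is independent of $\epsilon$ and which satisfies $\EE[\exp(\lambda C_\alpha)] < \infty$ for every $\lambda \in \RR$, with $|\tilde B_s - \tilde B_{s'}| \le C_\alpha |s-s'|^\alpha$ on $[0,1]$. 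Undoing the rescaling, $|B_s - B_{s'}| \le C_\alpha |s-s'|^\alpha T_\epsilon^{1/2 - \alpha}$ on $[0, T_\epsilon]$. Fixing $\alpha = 1/4$ (any value in $(1/8, 1/2)$ works equally well), I would set
\begin{equation*}
\widetilde{C}_\epsilon \;:=\; C_{1/4}\cdot (C_0 \epsilon e^{L^\delta})^{1/4}\, (L^\delta+1)^{1/4}\;=:\; c(\epsilon)\, C_{1/4},
\end{equation*}
so that $c(\epsilon)$ is a deterministic quantity of order $e^{-L/4 + L^\delta/4}(L^\delta+1)^{1/4}$.

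For $a = O(\epsilon^{-1/8}) = O(e^{L/8})$ one then has $|a\, c(\epsilon)| = O\bigl(e^{-L/8 + L^\delta/4}(L^\delta+1)^{1/4}\bigr) \to 0$ since $\delta < 1$, hence $\EE[\exp(a\widetilde{C}_\epsilon)] = \EE[\exp(ac(\epsilon) C_{1/4})] \to 1$ by dominated convergence (for $\epsilon$ small enough the integrand is bounded by $\exp(C_{1/4})$, which is integrable). The tail bound $\PP(\widetilde{C}_\epsilon > \epsilon^{1/16}) \to 0$ follows immediately by Markov's inequality at $a = \epsilon^{-1/8}$: $\PP(\widetilde{C}_\epsilon > \epsilon^{1/16}) \le e^{-\epsilon^{-1/16}} \EE[e^{\epsilon^{-1/8} \widetilde{C}_\epsilon}] \to 0$. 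The only mildly delicate point is the choice of H\"older exponent, which must exceed $1/8$ so that the factor $e^{(1/8 - \alpha)L}$ arising from pairing $a = \epsilon^{-1/8}$ with $h^\alpha = \epsilon^\alpha e^{\alpha L^\delta}$ actually decays, yet must lie strictly below $1/2$ in order to apply Lemma~\ref{lem:contGP}; any $\alpha \in (1/8, 1/2)$ does the job.
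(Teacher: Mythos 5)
Your proof is correct, and it reaches the same conclusion by a genuinely different route from the paper. The paper exploits the stationary independent increments of $(B_t)_{t\ge 0}$ to produce a collection of i.i.d.\ H\"older constants $C_i$, one on each unit interval $[i,i+1]$, applies \Cref{lem:contGP} to each, sums them into $\widetilde{C}_\epsilon = 2\epsilon^{1/4}\sum_{i=0}^{(-\log\epsilon)^\delta+1}C_i$, and then computes $\EE[\exp(a\widetilde C_\epsilon)]$ \emph{exactly} as a power of a single one-dimensional MGF, so that the convergence to $1$ drops out from an elementary expansion. You instead rescale the entire window $[0,T_\epsilon]$ with $T_\epsilon = (-\log\epsilon)^\delta+1$ to the unit interval, invoke \Cref{lem:contGP} once for the rescaled Brownian motion, and absorb the modest cost $T_\epsilon^{1/2-\alpha}$ (polynomial in $-\log\epsilon$, hence harmless against the exponential gain $\epsilon^\alpha e^{\alpha(-\log\epsilon)^\delta}$) into the deterministic prefactor $c(\epsilon)$; the MGF limit then follows from dominated convergence rather than independence. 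Your version is slightly cleaner (no need for the i.i.d.\ structure of increments, a single application of the H\"older lemma) while the paper's version gives a more explicit, self-contained MGF estimate. One small thing to tidy: for very small $t\le 1$ and $\epsilon>0$ small, $\rho_\epsilon(u,t)$ can be slightly negative, so the window in the penultimate display should be $[-1,L^\delta+1]$ (or one extends $B$ symmetrically) rather than $[0,L^\delta+1]$; this is cosmetic and does not affect the argument. The exponent bookkeeping, the range $\alpha\in(1/8,1/2)$, and the Chernoff bound at $a=\epsilon^{-1/8}$ all check out.
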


\begin{proof}
Since $\left(\overline{X}(e^{-t})\right)_{t \ge 0}$ is a standard Brownian motion, which is e.g. $\frac{1}{3}$-H\"older continuous with stationary and independent increments, there exists a collection of i.i.d. random variables $C_{i}$ such that
\begin{align*}
|\overline{X}(e^{-t}) - \overline{X}(e^{-s})| \le C_{i} |t-s|^{\frac{1}{3}} \qquad \forall t, s \in [i, i+1].
\end{align*}

\noindent and $\EE\left[\exp(a C_i)\right] < \infty$ for any $a > 0$ by \Cref{lem:contGP}. In particular $C_i$ has positive moments of all orders.

Now consider
\begin{align*}
\overline{X}_{\epsilon}(e^{-t}) - \overline{X}(e^{-t})
= \int_{B(0,1)} \left[\overline{X}_{}(e^{-t}e_1 + \epsilon u) - \overline{X}(e^{-t})\right] \theta(u)du
\end{align*}

\noindent where $e_1$ is the first standard basis vector. Note that
\begin{align*}
\left|-\log |e^{-t} e_1 + \epsilon u| + \log |e^{-t}|\right|
\le \log |1+ \epsilon e^t|
\le \epsilon e^{(-\log \epsilon)^\delta} \le \epsilon^{3/4}
\end{align*}

\noindent for $\epsilon > 0$ sufficiently small. We see that the two numbers $t$ and $-\log |e^{-t} e_1 + \epsilon u|$ must lie in some interval of the form $[i, i+2]$, and thus
\begin{align*}
\sup_{t \le (-\log \epsilon)^{\delta}} |\overline{X}_{\epsilon}(e^{-t}) - \overline{X}(e^{-t})| \le \widetilde{C}_{\epsilon}
& \le \sum_{i=0}^{(-\log \epsilon)^{\delta}} \sup_{t \in [i, i+1]} |\overline{X}_{\epsilon}(e^{-t}) - \overline{X}(e^{-t})|\\
& \le 2 \epsilon^{1/4} \sum_{i=0}^{(-\log \epsilon)^{\delta}+1} C_i =: \widetilde{C}_{\epsilon}.
\end{align*}

\noindent We then verify, for any positive $a = O(\epsilon^{-1/8})$, that
\begin{align*}
\EE \left[ \exp\left(a \widetilde{C}_{\epsilon}\right)\right]
& = \EE \left[ \exp\left(2a \epsilon^{1/4} C_i\right)\right]^{(-\log \epsilon)^{\delta}+2}\\
& = \left(1+2a\epsilon^{1/4} \EE[C_i] + O(a^2\epsilon^{1/2})\right)^{(-\log \epsilon)^{\delta}+2}
= 1+ o(1)
\end{align*}

\noindent as $\epsilon$ tends to $0$. In particular,
\begin{align*}
\PP\left(\widetilde{C}_\epsilon > \epsilon^{1/16} \right) 
=  \PP\left(e^{\epsilon^{-1/8} \widetilde{C}_\epsilon} > e^{\epsilon^{-1/16}} \right) 
\le e^{-\epsilon^{-1/16}} \EE \left[e^{\epsilon^{-1/8} \widetilde{C}_\epsilon}\right] \xrightarrow{\epsilon \to 0^+} 0
\end{align*}

\noindent which concludes our proof.
\end{proof}

Next, we state a crucial estimate that will allow us to restrict ourselves to the leading order event $\left\{ \sup_{s \in [-\log r, -\log \epsilon]} B_s = O(1)\right\}$ as in the analysis of the toy model.

\begin{lem}\label{lem:error_con}
Let $r, \lambda > 0$ be fixed. For each $k \in \NN \cup \{0\}$ define the event
\begin{align*}
E_{\epsilon, k} := \left\{\sup_{s \in [-\log r, -\log \epsilon]} B_s - B_{-\log r} \in [k, k+1] \right\}.
\end{align*}

\noindent Then there exists $C>0$ independent of $k$ and $v \in \overline{A}$ such that
\begin{align*}
\sup_{\epsilon \in (0, \frac{r}{2}]} \left( \log \frac{1}{\epsilon}\right)^{1/2}\EE \left[ e^{-\lambda \mu_{d , \epsilon}(v, A)} 1_{E_{ \epsilon, k}}\right] \le C\sqrt{k} e^{-\frac{\lambda}{2} \sqrt{2d} k}.
\end{align*}

\noindent In particular,
\begin{align*}
\sup_{\epsilon \in (0, \frac{r}{2}]} \left( \log \frac{1}{\epsilon}\right)^{1/2}\EE \left[ e^{-\lambda \mu_{d , \epsilon}(v, A)} 1_{\{\sup_{s \in [-\log r, -\log \epsilon]} B_s \ge k \}}\right] \le C\sqrt{k} e^{-\frac{\lambda}{2}\sqrt{2d} k}.
\end{align*}
\end{lem}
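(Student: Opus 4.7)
The strategy is to combine three ingredients: (i) a reflection-principle estimate on $\PP(E_{\epsilon,k})$; (ii) a lower bound on $\mu_{d,\epsilon}(v,A)$ valid on a sub-event of $E_{\epsilon,k}$ exploiting the fact that the radial Brownian motion $B_s=\overline{X}(e^{-s})$ attains a value near $B_{-\log r}+k$; and (iii) the independence of $\widehat Y_d$ from $\overline X$ combined with translation invariance of $\widehat Y_d(e^{-s}\cdot)$ in $s$. Setting $T:=-\log(\epsilon/r)$ and $\tilde B_s:=B_{-\log r+s}-B_{-\log r}$ (a standard BM on $[0,T]$ independent of $\mathcal F_{-\log r}$), we have $E_{\epsilon,k}=\{\max_{s\le T}\tilde B_s\in[k,k+1]\}$, and the reflection principle yields $\PP(E_{\epsilon,k})=\tfrac{2}{\sqrt{2\pi T}}\int_k^{k+1}e^{-y^2/(2T)}\,dy\le C/\sqrt T$. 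Since $T\asymp\log(1/\epsilon)$ for $\epsilon\in(0,r/2]$, this gives $(\log 1/\epsilon)^{1/2}\PP(E_{\epsilon,k})\le C$.

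For the lower bound, let $\tau_k:=\inf\{s\ge -\log r:\tilde B_s=k\}$. Using the decomposition \eqref{eq:ref_dec}, one isolates the contribution to $\mu_{d,\epsilon}(v,A)$ coming from a unit interval $J:=[\tau_k,\tau_k+1]$. On the sub-event $H_k\subset E_{\epsilon,k}$ on which $\tau_k\le -\log\epsilon-1$ and $\sup_{s\in J}(B_{\tau_k}-B_s)\le 1$ -- which has conditional probability bounded below uniformly in $k$ by the strong Markov property at $\tau_k$ and standard Brownian estimates (or the path decomposition in Lemma \ref{lem:BES_path}) -- and using the uniform control of $|B_\epsilon-B|$ from Lemma \ref{lem:BM_eps}, we obtain
\[\mu_{d,\epsilon}(v,A)\ge c_1 e^{\sqrt{2d}(B_{-\log r}+k)}\, Z^{A,v}_{d,\epsilon}(J).\]
Since $\widehat Y_d\perp \overline X$, the measure $Z^{A,v}_{d,\epsilon}(J)$ is independent of the Brownian motion, and by translation invariance of $\widehat Y_d(e^{-s}\cdot)$ in $s$, conditional on the BM its law agrees with that of $Z^{A,v}_{d,\epsilon}$ on a fixed unit interval. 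The critical-GMC moment bounds (Lemma \ref{lem:GMC_moment}) together with Kahane's convexity inequality provide uniform-in-$\epsilon$ control of the negative moments of $e^{\sqrt{2d}B_{-\log r}}Z^{A,v}_{d,\epsilon}(J)$ of any order $q<1$.

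Setting $M:=\sqrt{2d}k/2$ and using $\EE[e^{-\lambda X}1_E]\le e^{-\lambda M}\PP(E)+\PP(X<M,E)$, we split
\[\EE\!\left[e^{-\lambda\mu_{d,\epsilon}(v,A)}1_{E_{\epsilon,k}}\right]\le e^{-\lambda\sqrt{2d}k/2}\,\PP(E_{\epsilon,k})+\PP(\mu_{d,\epsilon}(v,A)<M,\,E_{\epsilon,k}).\]
Multiplied by $(\log 1/\epsilon)^{1/2}$, the first term contributes at most $Ce^{-\frac{\lambda}{2}\sqrt{2d}k}$. For the second, the lower bound and the independence yield
\[\PP(\mu_{d,\epsilon}(v,A)<M,E_{\epsilon,k})\le \PP\!\left(e^{\sqrt{2d}B_{-\log r}}Z^{A,v}_{d,\epsilon}(J)<C'k\, e^{-\sqrt{2d}k}\right)\PP(E_{\epsilon,k})+\PP(H_k^c\cap E_{\epsilon,k}).\]
Markov's inequality applied with an exponent $q<1$ to the negative moments above bounds the first contribution by $Ck^{q}e^{-q\sqrt{2d}k}/\sqrt T$, and the residual term $\PP(H_k^c\cap E_{\epsilon,k})$ is absorbed by a similar argument using sub-intervals of $J$. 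Choosing $q$ comparable to $\lambda/2$ (with the convention $q\le 1/2$ so the polynomial prefactor is at worst $\sqrt k$) yields the claimed bound $C\sqrt k\,e^{-\frac{\lambda}{2}\sqrt{2d}k}$. The corollary on $\sup_{s\in[-\log r,-\log \epsilon]}B_s\ge k$ follows by summing over $k'\ge k$.

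\textbf{Main obstacle.} The delicate step is the quantitative lower bound on $\mu_{d,\epsilon}(v,A)$ on a sub-event of positive conditional probability on $E_{\epsilon,k}$; it requires a careful path analysis of the Brownian motion around its argmax, ideally via Williams' decomposition (Lemmas \ref{lem:time_rev}--\ref{lem:RadNik}), together with a uniform-in-$\epsilon$ lower-tail estimate for $Z^{A,v}_{d,\epsilon}(J)$. The latter is itself a nontrivial estimate on negative moments of critical GMCs localised to a fixed unit ``scale'' interval, and should be handled by Gaussian comparison with the exact field together with the negative moment bounds inherited from Lemma \ref{lem:GMC_moment}(ii).
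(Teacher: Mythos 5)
Your proposal takes a genuinely different route from the paper, and the overall strategy is sound, but it has a few technical gaps that would need to be filled before it becomes a rigorous argument.

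\textbf{Comparison.} The paper first replaces $e^{-\lambda x}$ by $x^{-\lambda}$, truncates the integral defining $\mu_{d,\epsilon}(v,A)$ to the window $[-\log r, (-\log\epsilon)^\delta]$, removes the mollification error via Lemma \ref{lem:BM_eps}, and then applies Cauchy--Schwarz so that the remaining estimate is a negative-moment bound of the type established in \cite[eq. (6.3)]{DKRV2017}. Your proposal instead decomposes $\EE[e^{-\lambda X}1_E] \le e^{-\lambda M}\PP(E) + \PP(X<M,E)$ with $M \propto k$, uses the reflection principle to control $(\log 1/\epsilon)^{1/2}\PP(E_{\epsilon,k})$, and controls the ``small-mass'' term by Markov's inequality applied to a negative moment of $e^{\sqrt{2d}B_{-\log r}}Z_{d,\epsilon}^{A,v}(J)$ on a unit interval $J$ around the argmax. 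This is more self-contained and avoids the outsourcing to \cite{DKRV2017}, which is a genuine advantage in transparency.

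\textbf{Gaps.} (i) You assert that $Z_{d,\epsilon}^{A,v}(J)$ is independent of the Brownian motion and hence can be factored out of $\PP(\cdot,\,E_{\epsilon,k})$. This is not literally true: the random interval $J=[\tau_k,\tau_k+1]$ is itself a function of the Brownian path, so $Z_{d,\epsilon}^{A,v}(J)$ is correlated with $E_{\epsilon,k}$. You would need to argue by conditioning on the BM path (which freezes $J$) and then invoke a uniform bound on the resulting conditional law of the mass of a \emph{fixed} unit interval. (ii) Even after conditioning, the claim that ``its law agrees with that of $Z_{d,\epsilon}^{A,v}$ on a fixed unit interval'' is not exact: for fixed $\epsilon>0$, the mollified field $\widehat Y_{d,\epsilon}(e^{-s}\cdot)$ is not translation-invariant in $s$ (the mollification scale $\epsilon$ is not scaled with $s$), and more importantly the indicator $1_A(v+e^{-s}u)$ in $Z_{d,\epsilon}^{A,v}(ds)$ depends on $s$. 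One can obtain uniform lower bounds on the negative moments here (the $1_A$ factor always covers at least a fixed fraction of the sphere for $v\in\overline A$ with $A=B(0,2r)$), but this has to be argued, not asserted. (iii) The negative-moment control is attributed to Lemma \ref{lem:GMC_moment}, which only addresses \emph{positive} moments $q<1$; negative moments of critical GMC do exist (for all orders), but require a separate argument (cf.\ the proof of Lemma \ref{lem:fusion_moment}, which establishes a negative-moment bound from scratch, or the standard argument via Kahane's inequality and comparison with the $L$-exact field). As a side remark, because negative moments exist for all $q>0$, the restriction ``$q\le 1/2$'' in your last step is unnecessary, and with $q=\lambda/2$ you would obtain the stated exponent $e^{-\frac\lambda 2\sqrt{2d}k}$ for all $\lambda>0$ (your version only yields $e^{-\min(\lambda/2,1/2)\sqrt{2d}k}$ for $\lambda>1$, though this weaker decay would still suffice for the application in Lemma \ref{lem:fusion}).
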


\begin{proof}[Sketch of proof]
We may take $v \in \partial A$ (since the intersection $A \cap (v+A) = B(0, 2r) \cap B(v, 2r)$ is smallest if $v \in \partial A$) to obtain an upper bound that is uniform in $v \in \overline{A}$. Fix some $\delta \in (0, 1)$. We have
\begin{align*}
\EE \left[ e^{-\lambda \mu_{d , \epsilon}(v, A)} 1_{E_{\epsilon, k}}\right]
&\le \EE \left[ \mu_{d , \epsilon}(v, A)^{-\lambda} 1_{E_{\epsilon, k}}\right]\\
& \le \EE \left[ \left(e^{\sqrt{2d} B_{\epsilon, -\log r}}\int_{-\log r}^{(-\log \epsilon)^{\delta}} e^{\sqrt{2d} (B_{\epsilon, t}-B_{\epsilon, -\log r})}Z_{d, \epsilon}^{A, v}(dt) \right)^{-\lambda}1_{E_{\epsilon, k}}\right]\\
& \le \EE \left[e^{\sqrt{2d} \lambda (\widetilde{C}_{\epsilon} - B_{-\log r})} \left(\int_{-\log r}^{(-\log \epsilon)^{\delta}} e^{\sqrt{2d} (B_{t}-B_{-\log r})}Z_{d, \epsilon}^{A, v}(dt) \right)^{-\lambda}1_{E_{\epsilon, k}}\right]
\end{align*}

\noindent where $\widetilde{C}_{\epsilon}$ is as in \Cref{lem:BM_eps}. By Cauchy-Schwarz we only need to consider
\begin{align*}
\EE \left[\left(\int_{-\log r}^{(-\log \epsilon)^{\delta}} e^{\sqrt{2d} (B_{t}-B_{-\log r})}Z_{d, \epsilon}^{A, v}(dt) \right)^{-\lambda}1_{E_{\epsilon, k}}\right],
\end{align*}

\noindent which can be studied using the same method in the proof of \cite[equation (6.3)]{DKRV2017}, leading to the bound
\begin{align*}
\left(\log \frac{1}{\epsilon}\right)^{1/2}\EE \left[\left(\int_{-\log r}^{(-\log \epsilon)^{\delta}} e^{\sqrt{2d} (B_{t}-B_{-\log r})}Z_{d, \epsilon}^{A, v}(dt) \right)^{-\lambda}1_{E_{\epsilon, k}}\right] \le C k e^{-\lambda \sqrt{2d} k}
\end{align*}

\noindent and hence our claim.
\end{proof}

The final estimate we need quantifies the claim that the Brownian motion stays very negative after reaching the maximum.
\begin{lem}\label{lem:BM_neg}
Let $t = -\log \epsilon$ and fix $k > 0$. As $t \to \infty$ we have
\begin{align*}
\PP\left(\max_{s \in [t^{1/2}, t]} B_s - B_{-\log r} \ge k - t^{1/8}  \bigg| \max_{s \in [-\log r, t]} B_s - B_{-\log r} \le k \right) = o(1).
\end{align*}
\end{lem}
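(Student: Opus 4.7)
The plan is to reduce this to a clean Brownian question and then do a direct reflection-principle estimate. Writing $W_u := B_{u - \log r} - B_{-\log r}$ for $u \ge 0$ gives a standard Brownian motion from $0$, and the $O(1)$ shifts by $-\log r$ do not affect the asymptotics, so the statement is equivalent to showing
\[
\PP\bigl(\max_{u \in [t^{1/2}, t]} W_u \ge k - t^{1/8} \bigm| \max_{u \in [0, t]} W_u \le k\bigr) \longrightarrow 0 \quad \text{as } t \to \infty.
\]
By the reflection principle the denominator equals $2\Phi(k/\sqrt{t}) - 1 \sim \sqrt{2/\pi}\, k\, t^{-1/2}$, so it suffices to show that the joint probability $\mathcal{N}(t) := \PP(\max_{[t^{1/2},t]} W \ge k - t^{1/8},\, \max_{[0,t]} W \le k)$ is $o(k/\sqrt{t})$.

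For $\mathcal{N}(t)$, I would condition on $W_{t^{1/2}} = w$ and apply the Markov property at time $t^{1/2}$. The sub-density $p(w)\,dw := \PP(W_{t^{1/2}} \in dw,\ \max_{[0,t^{1/2}]} W \le k)$ is given by the standard reflection formula $\frac{1}{\sqrt{2\pi}\,t^{1/4}} (e^{-w^2/(2\sqrt{t})} - e^{-(2k - w)^2/(2\sqrt{t})})$ for $w \le k$, which via $1 - e^{-x} \le x$ admits the convenient bound
\[
p(w) \le \frac{2k(k-w)}{\sqrt{2\pi}\, t^{3/4}}\, e^{-w^2/(2\sqrt{t})}, \qquad w \le k.
\]
I would split the resulting $w$-integral into a near-$k$ range $w \in [k - t^{1/8}, k]$, where $\max_{[t^{1/2},t]} W \ge k - t^{1/8}$ is automatic, and a far-from-$k$ range $w < k - t^{1/8}$, where the BM starting afresh from $w$ must climb at least $k - t^{1/8} - w$ without exceeding $k - w$.

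For the near-$k$ range the remaining constraint $\max_{[t^{1/2},t]} W \le k$ has conditional probability $\le C(k - w)/\sqrt{t}$ by another reflection, and direct Gaussian integration gives a contribution $O(k\, t^{-7/8})$. For the far-from-$k$ range, reflection bounds the inner probability by $\sqrt{2/(\pi t)}\, t^{1/8}\, \exp(-(k - t^{1/8} - w)^2/(2(t - t^{1/2})))$; after the substitution $u = k - t^{1/8} - w \ge 0$ and the elementary lower bound $(u + t^{1/8} - k)^2 \ge u^2/2 + t^{1/4}/8$ (valid for $t^{1/8} \ge 2k$), the integrand becomes a Gaussian in $u$ of effective variance $O(\sqrt{t})$, integrating to $O(k\, t^{-5/8})$. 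Summing the two ranges gives $\mathcal{N}(t) = O(k\, t^{-5/8})$, so the conditional probability is $O(t^{-1/8}) = o(1)$, as required.

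The main quantitative subtlety is in the far-from-$k$ range: one must retain the Gaussian factor $e^{-w^2/(2\sqrt{t})}$ coming from $p(w)$ together with the tail factor $e^{-(k - t^{1/8} - w)^2/(2t)}$ coming from the future reflection estimate. Discarding the former by bounding it by $1$ turns the integrand into a Gaussian in $u$ of variance $O(t)$, whose integral is too large to close the bound; combining the two exponents via the cross-term inequality above produces the effective variance $O(\sqrt{t})$ which supplies the extra power of $t^{-1/4}$ needed. Apart from this point, everything reduces to routine Gaussian computations.
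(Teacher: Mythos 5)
Your proof is correct, and it begins with the same split as the paper (condition at time $t^{1/2}$, then separate the cases $k-W_{t^{1/2}} \le t^{1/8}$ and $k-W_{t^{1/2}} > t^{1/8}$), but the mechanics afterwards are genuinely different. For the far range the paper does not retain any exponential at all: it bounds the inner probability uniformly by $\sqrt{2/(\pi(t-t^{1/2}))}\,t^{1/8}$ and then integrates the sub-density exactly, yielding $\sqrt{2/(\pi(t-t^{1/2}))}\,t^{1/8}\,\PP(\max_{s\le t^{1/2}}B_s\le k)=O(t^{-5/8})$, with no cross-term manipulation needed. Your pointwise bound $p(w)\le \frac{2k(k-w)}{\sqrt{2\pi}t^{3/4}}e^{-w^2/(2\sqrt t)}$ is sharper near $w=k$ but grows linearly in $|w|$; this is exactly why, as you noted, discarding the Gaussian cutoff fails and you are forced into the cross-term inequality $(u+t^{1/8}-k)^2\ge u^2/2+t^{1/4}/8$ to recover the $t^{-5/8}$ rate. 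Integrating the sub-density rather than bounding it pointwise avoids this subtlety altogether. For the near-$k$ range the paper recognizes the surviving expectation $\EE[(k-B_{t^{1/2}})_+\,1_{\{\max\le k\}}\,1_{\{k-B_{t^{1/2}}\le t^{1/8}\}}]$ as $k$ times, under the change of measure of \Cref{lem:RadNik}, the probability that a $\mathrm{BES}_k(3)$ process at time $t^{1/2}$ is below $t^{1/8}$, which tends to $0$ since such a process is typically of order $t^{1/4}\gg t^{1/8}$; your version instead applies a further reflection bound $\le C(k-w)/\sqrt{t}$ and an explicit Gaussian integral, getting $O(kt^{-7/8})$ — the same rate $O(t^{-3/8})$ for the conditional probability. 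Both routes close and give the overall rate $O(t^{-1/8})$. What your approach buys is that everything reduces to elementary Gaussian density computations and reflection, with no Bessel processes; what the paper's buys is that the change-of-measure argument is shorter, avoids the cross-term step, and is consistent with the systematic use of $\mathrm{BES}(3)$ processes throughout \Cref{app:fusion}.
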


\begin{proof}
For simplicity let us only treat $r = 1$ (and hence $B_{-\log r} = 0$) everything below works for any $r > 0$. Using the fact that $\PP(\max_{s \le t} B_s \le n) = \sqrt{\frac{2}{\pi}}\int_0^{n/\sqrt{t}} e^{-x^2 / 2}dx$, we first obtain
\begin{align*}
& \PP\left(\max_{s \in [t^{1/2}, t]} B_s \in [k - t^{1/8}, k] \bigg| B_{t^{1/2}} \right)\\
& \qquad = \PP\left(\max_{s \in [t^{1/2}, t]} B_s - B_{t^{1/2}} \in [k - t^{1/8} -B_{t^{1/2}}, k - B_{t^{1/2}}] \bigg| B_{t^{1/2}} \right)\\
& \qquad \le \sqrt{\frac{2}{\pi (t - t^{1/2})}} \left[(k - B_{t^{1/2}})_+ 1_{\{k - B_{t^{1/2}} \le t^{1/8}\}} + t^{1/8} 1_{\{k - B_{t^{1/2}} \ge t^{1/8}\}}\right].
\end{align*}

\noindent Then
\begin{align*}
& \PP\left(\max_{s \in [t^{1/2}, t]} B_s \ge k - t^{1/8} , \max_{s \in [0, t]} B_s \le k \right)\\
& \qquad = \EE \left[\PP\left(\max_{s \in [t^{1/2}, t]} B_s \in [k - t^{1/8}, k] \bigg| B_{t^{1/2}} \right) 1_{\{\max_{s \in [0, t^{1/2}]} B_s \le k\}} \right]\\
& \qquad \le \sqrt{\frac{2}{\pi (t - t^{1/2})}} \EE \left[ \left(k -B_{t^{1/2}}\right)_+ 1_{\{\max_{s \in [0, t^{1/2}]} B_s \le k\}} 1_{\{k - B_{t^{1/2}} \le t^{1/8}\}}\right]\\
& \qquad \qquad + \sqrt{\frac{2}{\pi (t - t^{1/2})}} \underbrace{t^{1/8} \PP\left(\max_{s \le t^{1/2}} B_s \le k\right)}_{=O(t^{-1/8})}.
\end{align*}

To finish our proof, we only have to show that
\begin{align*}
& \EE \left[ \left(k -B_{t^{1/2}}\right)_+ 1_{\{\max_{s \in [0, t^{1/2}]} B_s \le k\}} 1_{\{k - B_{t^{1/2}} \le t^{1/8}\}}\right] \\
& = \EE \left[ \left(k -B_{t^{1/2}}\right)_+ 1_{\{\max_{s \in [0, t^{1/2}]} B_s \le k\}}\right] 
\EE \left[ \frac{ \left(k -B_{t^{1/2}}\right)_+ 1_{\{\max_{s \in [0, t^{1/2}]} B_s \le k\}} 1_{\{k - B_{t^{1/2}} \le t^{1/8}\}}}{\EE \left[ \left(k -B_{t^{1/2}}\right)_+ 1_{\{\max_{s \in [0, t^{1/2}]} B_s \le k\}}\right]}\right] \\
& = o(1).
\end{align*}

\noindent But then by \Cref{lem:RadNik}, this is just equal to $k$ multiplied by the probability that a $\mathrm{BES}_k(3)$-process at time $t^{1/2}$ is less than $t^{1/8}$, which obviously tends to $0$ as $t \to \infty$.
\end{proof}

\subsection{Sketch of proof of \Cref{lem:fusion}} 
Let us write $\overline{E}_{\epsilon, k} = \cup_{j < k} E_{\epsilon, j} =  \left\{\sup_{s \in [-\log r,t]} B_s - B_{-\log r} \le k \right\}$  and introduce
\begin{align*}
\Ga_{\epsilon, k} := \left\{\max_{s \in [t^{1/2}, t]} B_s - B_{-\log r} \le k - t^{1/8}\right\}
\end{align*}

\noindent where $t = -\log \epsilon$ as in \Cref{lem:BM_neg}. Then, 
\begin{align*}
& \left(\log \frac{1}{\epsilon}\right)^{1/2} \EE\left[e^{-\lambda \mu_{d, \epsilon}(v, A)}\right]
= \left(\log \frac{1}{\epsilon}\right)^{1/2} \EE\left[e^{-\lambda \mu_{d, \epsilon}(v, A)} 1_{\Ga_{\epsilon, k}} 1_{\overline{E}_{\epsilon, k}}\right]\\
& \qquad \qquad + \left(\log \frac{1}{\epsilon}\right)^{1/2} O\left(\PP\left(\Ga_{\epsilon, k}^c \cap \overline{E}_{\epsilon, k}\right)\right) + \left(\log \frac{1}{\epsilon}\right)^{1/2} O \left(\EE\left[ e^{-\lambda \mu_{d, \epsilon}(v, A)} 1_{E_{\epsilon, k}^c}\right]\right)
\end{align*}

\noindent where the first error is of order $o(1)$ as $\epsilon \to 0^+$ (depending on $k$) by \Cref{lem:BM_neg}, and the second error is of order $O(\sqrt{k} e^{-\frac{\lambda}{2} \sqrt{2d}k})$ uniformly in $\epsilon \to 0^+$ by \Cref{lem:error_con}. In other words we only need to focus on
\begin{align}\label{eq:fusion_cmain}
\left(\log \frac{1}{\epsilon}\right)^{1/2} \EE\left[e^{-\lambda \mu_{d, \epsilon}(v, A)} 1_{\Ga_{\epsilon, k}} 1_{\overline{E}_{\epsilon, k}}\right].
\end{align}

Recall from \Cref{subsec:fusion_idea} that $\mu_{d, \epsilon}(v, A) = \mu_{d, \epsilon}(v, r) + \mu_{d, \epsilon}^c(v) + R_{d, \epsilon}(v, r)$. The first observation here is that $R_{d, \epsilon}(v, r)$ is essentially\footnote{It only depends on $(B_{ \epsilon, s})_{s \le -\log r}$, which is essentially $(B_s)_{s \le -\log r}$ up to a vanishing error by \Cref{lem:BM_eps}.} independent of $\overline{E}_{\epsilon, k}$, and so it still converges to $R_d(v, r)$ even when conditioned on this sequence of events as $\epsilon \to 0^+$. On the good event $\Ga_{\epsilon, k}$, we also see that
\begin{align*}
\mu_{d, \epsilon}(v, r)
& = e^{\sqrt{2d} B_{\epsilon, -\log r}}\int_{-\log r}^{-\log 2\epsilon} e^{\sqrt{2d} (B_{\epsilon, s}-B_{\epsilon, -\log r})}Z_{d, \epsilon}^{A, v}(ds),\\
& = e^{\sqrt{2d} (B_{-\log r} + O(\widetilde{C}_{\epsilon}))}
\Bigg[\int_{-\log r}^{t^{1/2}} e^{\sqrt{2d} (B_{s}-B_{-\log r})}Z_{d, \epsilon}^{A, v}(ds) + O\left(e^{\sqrt{2d} (k - t^{1/8})}\int_{t^{1/2}}^{t-\log 2} Z_{d, \epsilon}^{A, v}(ds)\right)\Bigg],\\
\mu_{d, \epsilon}^c(v)
& \le 4^d e^{\sqrt{2d} B_{\epsilon, -\log 2\epsilon}} \overline{\mu}_{d, \epsilon}^c(v) 
\le 4^d e^{\sqrt{2d} (k - t^{1/8} )} \overline{\mu}_{d, \epsilon}^c(v).
\end{align*}

\noindent But since
\begin{align*}
\left(\log \frac{1}{\epsilon}\right)^{1/2} \EE\left[1_{\overline{E}_{\epsilon, k}}\right]
\le k \sqrt{\frac{2}{\pi}} \sqrt{\frac{t}{t + \log r}}
\end{align*}

\noindent is bounded uniformly in $\epsilon \to 0^+$ (or equivalently $t \to \infty$) for each fixed $k, r > 0$, we may ignore $\mu_{d, \epsilon}^c(v)$ and the residual term $O\left(e^{\sqrt{2d} (k - t^{1/8})}\int_{t^{1/2}}^{t-\log 2}Z_{d, \epsilon}^{A, v}(ds)\right)$ in $\mu_{d, \epsilon}(v, r)$ and argue by dominated convergence that \eqref{eq:fusion_cmain} is asymptotically equal to  
\begin{align*}
& (1+o(1))\left(\log \frac{1}{\epsilon}\right)^{1/2} \EE\left[\frac{1_{\Ga_{\epsilon, k}}1_{\overline{E}_{\epsilon, k}}}{\exp\left(\lambda \left(e^{\sqrt{2d} B_{-\log r}}\int_{-\log r}^{t^{1/2}} e^{\sqrt{2d} (B_s - B_{-\log r})}Z_{d, \epsilon}^{A, v}(ds) + R_d(v, r)\right)\right)}\right]\\
& = (1+o(1)) \left(\log \frac{1}{\epsilon}\right)^{1/2}\EE\left[\frac{1_{\overline{E}_{\epsilon, k}}}{\exp\left(\lambda \left(e^{\sqrt{2d} B_{-\log r}}\int_{-\log r}^{t^{1/2}} e^{\sqrt{2d} (B_s - B_{-\log r})}Z_{d, \epsilon}^{A, v}(ds) + R_d(v, r)\right)\right)}\right] \\
& \qquad \qquad +  \underbrace{O\left(\left(\log \frac{1}{\epsilon}\right)^{1/2}\PP\left(\Ga_{\epsilon, k}^c \cap \overline{E}_{\epsilon, k}\right)\right)}_{=o(1)}.
\end{align*}

Let us write $(W_s)_{s \ge 0} = (B_{s - \log r} - B_{- \log r})_{s \ge 0}$, which is a Brownian motion independent of $B_{-\log r}$, and denote by $(\Fa_s)_{s \ge 0}$ its natural filtration. Using again the distribution for the running maximum of a Brownian motion, we have
\begin{align*}
& \EE\left[1_{\overline{E}_{\epsilon, k}} \bigg| \Fa_{t^{1/2} + \log r}\right]
= \PP\left(\max_{s \in [0, t + \log r]} W_s \le k \bigg| \Fa_{t^{1/2} + \log r}\right)\\
& = 1_{\{\max_{s \le t^{1/2} + \log r} W_s \le k\}} \PP\left(\max_{s \in [t^{1/2} + \log r, t + \log r]} W_s \le k \bigg| \Fa_{t^{1/2} + \log r}\right)\\
&\begin{dcases}
\le 1_{\{\max_{s \le t^{1/2} + \log r} W_s \le k\}} \sqrt{\frac{2}{\pi}}  \frac{k - W_{t^{1/2} + \log r}}{\sqrt{t - t^{1/2}}}, \\
\ge  1_{\{\max_{s \le t^{1/2} + \log r} W_s \le k\}} \sqrt{\frac{2}{\pi}}  \frac{k - W_{t^{1/2} + \log r}}{\sqrt{t - t^{1/2}}}e^{-(t-t^{1/2})^{1/4}} 1_{\{(k - W_{t^{1/2} + \log r})^2 \le 2(t-t^{1/2})^{3/4}\}}.
\end{dcases}
\end{align*}

Interpreting the process $(k-W_s)_{s \le t^{1/2} + \log r}$ as a $\mathrm{BES}_k(3)$-process $(\beta_s^k)_{s \ge 0}$ under the change of measure
\begin{align*}
\frac{1_{\{\max_{s \le t^{1/2} + \log r} W_s \le k\}}(k - W_{t^{1/2} + \log r})}{\EE\left[1_{\{\max_{s \le t^{1/2} + \log r} W_s \le k\}}(k - W_{t^{1/2} + \log r})\right]} d\PP 
= \frac{1}{k} 1_{\{\max_{s \le t^{1/2} + \log r} W_s \le k\}}(k - W_{t^{1/2} + \log r})  d\PP
\end{align*}

\noindent by \Cref{lem:RadNik}, we see that
\begin{align*}
& \EE\left[1_{\{\max_{s \le t^{1/2} + \log r} W_s \le k\}} (k - W_{t^{1/2} + \log r}) 1_{\{(k - W_{t^{1/2} + \log r})^2 > 2(t-t^{1/2})^{3/4}\}}\right]\\
& \qquad = k \PP\left( \beta_{t^{1/2} + \log r}^k > 2(t-t^{1/2})^{3/4}\right) = o(1), \qquad t \to \infty
\end{align*}

\noindent and hence

\begin{align*}
& \left(\log \frac{1}{\epsilon}\right)^{1/2} \EE\left[\frac{1_{\overline{E}_{\epsilon, k}}}{\exp\left(\lambda \left(e^{\sqrt{2d} B_{-\log r}}\int_{-\log r}^{t^{1/2}} e^{\sqrt{2d} (B_s - B_{-\log \epsilon})}Z_{d, \epsilon}^{A, v}(ds) + R_d(v, r)\right)\right)}\right]\\
& =  (1+o(1)) \sqrt{\frac{2}{\pi}} \EE\left[\frac{1_{\{\max_{s \le t^{1/2} + \log r} W_s \le k\}}(k - W_{t^{1/2} + \log r})}{\exp\left(\lambda \left(e^{\sqrt{2d} B_{-\log r}}\int_{0}^{t^{1/2} + \log r} e^{\sqrt{2d}W_{s}}Z_{d, \epsilon}^{A, v} \circ \phi_{-\log r} (ds) + R_d(v, r)\right)\right)}\right]\\
& =  (1+o(1)) \sqrt{\frac{2}{\pi}} k \EE\left[\exp\left(-\lambda \left(e^{\sqrt{2d} B_{-\log r}}\int_{0}^{t^{1/2} + \log r} e^{\sqrt{2d}(k - \beta_s^k)}Z_{d, \epsilon}^{A, v} \circ \phi_{-\log r} (ds) + R_d(v, r)\right)\right)\right].
\end{align*}

Summarising all the analysis above, we have
\begin{align}
\notag
&  \left(\log \frac{1}{\epsilon}\right)^{1/2} \EE\left[e^{-\lambda \mu_{d, \epsilon}(v, A)}\right]\\
\notag 
& \qquad \xrightarrow{\epsilon \to 0^+}\sqrt{\frac{2}{\pi}} k \EE\left[\exp\left(-\lambda \left(e^{\sqrt{2d} B_{-\log r}}\int_{0}^{\infty} e^{\sqrt{2d}(k - \beta_s^k)}Z_d^{A, v} \circ \phi_{-\log r} (ds) + R_d(v, r)\right)\right)\right]\\
\label{eq:fusion_almost}
& \qquad \qquad \qquad + O(\sqrt{k} e^{-\frac{\lambda}{2} \sqrt{2d}k}).
\end{align}

We now apply \Cref{lem:BES_path}, which allows us to rewrite $(k - \beta_s^k)_{s \ge 0}$ as the new process
\begin{align*}
R_s = 
\begin{cases}
W_s, & s \le \tau(\widetilde{U}), \\
\widetilde{U} + \beta_{s - \tau(\widetilde{U})}^0, & s \ge  \tau(\widetilde{U})
\end{cases}
\end{align*}

\noindent where $(W_s)_{s \ge 0}$ is a standard Brownian motion, $\widetilde{U}$ is a $\mathrm{Uniform}[0, k]$ random variable, $\tau(\widetilde{U}) := \inf \{s > 0: W_s = \widetilde{U}\}$, and $(\beta_s^0){s \ge 0}$ is a $\mathrm{BES}_0(3)$-process. Using \Cref{lem:time_rev}, we may further write
\begin{align*}
(R_{s + \tau(\widetilde{U})})_{s \in \left[-\tau(\widetilde{U}), \infty\right]} \overset{d}{=} (\widetilde{U} + \beta_s)_{s \in \left[-L_{\widetilde{U}, -}, \infty\right]}
\end{align*}

\noindent where $(\beta_s)_{s \ge 0}$ and $(\beta_{-s})_{s \ge 0}$ are two independent $\mathrm{BES}_0(3)$-processes and
\begin{align*}
L_{x, -} := \sup \{ s \ge 0: \beta_{-s} = x \}.
\end{align*}

\noindent This gives
\begin{align*}
& k \EE\left[\exp\left(-\lambda \left(e^{\sqrt{2d} B_{-\log r}}\int_{0}^{\infty} e^{\sqrt{2d}(k - \beta_s^k)}Z_d^{A, v} \circ \phi_{-\log r} (ds) + R_d(v, r)\right)\right)\right]\\
& = k \EE\left[\exp\left(-\lambda \left(e^{\sqrt{2d} B_{-\log r}}\int_{-L_{\widetilde{U}, -}}^{\infty} e^{\sqrt{2d}(\widetilde{U}- \beta_s)}Z_d^{A, v} \circ \phi_{-\log r + L_{\widetilde{U}, -}} (ds) + R_d(v, r)\right)\right)\right]\\
& = \int_0^k \EE\left[\exp\left(-\lambda \left(e^{\sqrt{2d}x} \widetilde{\mu}_d^x(v, r) + R_d(v, r)\right)\right)\right] dx
\end{align*}

\noindent where 
\begin{align*}
\widetilde{\mu}_d^x (v, r) = e^{\sqrt{2d}B_{-\log r}} \int_{-L_{x, -}}^\infty e^{-\sqrt{2d}\beta_s} Z_d^{A, v} \circ \phi_{- \log r + L_{x, -}}(ds).
\end{align*}

\noindent Plugging this into \eqref{eq:fusion_almost} and sending $k \to \infty$, we conclude that
\begin{align*}
\lim_{\epsilon \to 0^+} \left(\log \frac{1}{\epsilon}\right)^{1/2} \EE\left[e^{-\lambda \mu_{d, \epsilon}(v, A)}\right]
& = \sqrt{\frac{2}{\pi}} \int_0^\infty \EE\left[\exp\left(-\lambda \left(e^{\sqrt{2d}x} \widetilde{\mu}_d^x(v, r) + R_d(v, r)\right)\right)\right] dx.
\end{align*}
\hfill \qed

\section{Beyond Jordan measurable sets} \label{app:jordan}
\subsection{Issues with general open sets}
Revisiting \Cref{lem:universal},  the only place where Jordan measurability is needed is the derivation of an estimate of the form
\begin{align}\label{eq:error_need}
\PP\left(\mu_{f}(B_\delta) > t \right) \le \frac{C \delta}{t} 
\qquad \text{ or } \qquad
\lambda^{-1/2}\EE \left[ 1 - e^{-\lambda \mu_f(B_{\delta})^2}\right] \le C \delta
\end{align}

\noindent for any open set $B_\delta$ such that $|B_\delta| \le \delta$. 

When $B_\delta$ is not Jordan measurable, the theory of Lebesgue integration suggests that $B_{\delta}$ can be covered by a countable union $\widetilde{B}_{\delta} = \cup_{i=1}^\infty B_{\delta, i}$ of open cubes $B_{\delta, i}$ up to a small error $\delta > 0$, i.e. $|\widetilde{B}_{\delta} \setminus B_{\delta}| \le \delta$. By further partitioning, we may assume that the interior of these cubes are disjoint from each other, and the upper bound of the splitting lemma reads as
\begin{align*}
\EE \left[ 1 - e^{-\lambda \mu_f(B_{\delta})^2}\right]
\le  \sum_i \EE \left[ 1 - e^{-\lambda \mu_f(B_{\delta, i})^2}\right]
+ \sum_{j < k} \EE \left[ 1 - e^{-2\lambda \mu_f(B_{\delta, j})\mu_f(B_{\delta, k})}\right].
\end{align*}

\noindent It is not difficult to check that there exists some $C>0$ uniformly in everything such that $\PP(\mu_f(B_{\delta, i})> t) \le C|B_{\delta, i}| / t$, from which we obtain 
\begin{align*}
\lambda^{-1/2}\sum_i \EE \left[ 1 - e^{-\lambda \mu_f(B_{\delta, i})^2}\right] \le C' \sum_{i} |B_{\delta, i}| \le 2C' \delta.
\end{align*}

\noindent The real issue is to treat the cross terms $\EE \left[ 1 - e^{-2\lambda \mu_f(B_{\delta, j})\mu_f(B_{\delta, k})}\right]$: while we know each of them is of order $o(\lambda^{1/2})$, the current estimate (for the hidden constant in the little-o notation) is too weak to allow an application of dominated convergence in order to interchange the summation and limit.

\subsection{A potential direct approach to low dimensions}
We restrict our discussion below to $d=2$ even though the same argument applies to $d = 1$. Here we take $\mu_f(dx)$ as the critical GMC associated to the exact field $Y_0$ on $D = B(0,1) \subset \RR^2 \equiv \CC$, and our goal is to show that
\begin{align*}
\PP\left(\mu_f(A) > t \right) \overset{t \to \infty}{\sim} \frac{|A|}{\sqrt{2 \pi} t}
\end{align*}

\noindent for any open $A \subset B(0,1)$. Following \cite{Won2019}, we shall consider a different Laplace estimate and aim to show that
\begin{align*}
\EE\left[e^{-\lambda / \mu_f(A)}\right] \overset{\lambda \to \infty}{\sim} \frac{|A|}{\sqrt{2\pi}\lambda}
\end{align*}

\noindent which is equivalent to the desired tail asymptotics by Tauberian \cref{theo:tau} (by taking $\nu(ds) = \PP(\mu_f(A)^{-1}\in ds)$). Using the same ideas in \Cref{app:fusion}, one expects to obtain the localisation limit
\begin{align}\label{eq:ll_direct}
\EE\left[e^{-\lambda / \mu_f(A)}\right]
= \int_A \sqrt{\frac{2}{\pi}} \left(\int_0^\infty  \EE \left[ \frac{e^{-\lambda/\left(e^{\sqrt{2d}x} \widetilde{\mu}_d^x(v, r) + R_d(v, r)\right)}}{e^{\sqrt{2d}x} \widetilde{\mu}_d^x(v, r) + R_d(v, r)} \right] dx\right) dv
\end{align}

\noindent as well as the asymptotics
\begin{align}\label{eq:le_direct}
\lim_{\lambda \to \infty} \lambda \int_0^\infty  \EE \left[ \frac{e^{-\lambda/\left(e^{\sqrt{2d}x} \widetilde{\mu}_d^x(v, r) + R_d(v, r)\right)}}{e^{\sqrt{2d}x} \widetilde{\mu}_d^x(v, r) + R_d(v, r)} \right] dx = \frac{1}{\sqrt{2d}}
\end{align}

\noindent where $d = 2$ and $\widetilde{\mu}_{d}^x(v, r)$ and $R_d(v, r)$ are defined as in \Cref{lem:fusion}. 

The only difficulty in this direct approach is to justify the interchanging of limits and integration when $A$ is not a very nice set. For \eqref{eq:ll_direct}, the key is to make sure that the constant $C$ appearing in the bound of \Cref{lem:error_con} is indeed uniform in $ v \in A$. As for \eqref{eq:le_direct}, it requires better control over moments related to $e^{\sqrt{2d}x} \widetilde{\mu}_d^x(v, r) + R_d(v, r)$ since the bounds for $\EE\left [\widetilde{\mu}_d^x(v, r)^q \right]$ in \Cref{lem:fusion_moment} are not necessarily uniform in $v \in A$ and $x \ge 0$ when $q < 0$ in the general setting.



\begin{thebibliography}{99}
\bibitem{APS2018}
J. Aru, E. Powell and A. Sep\'ulveda.
Critical Liouville measure as a limit of subcritical measures. 
Preprint arXiv:1802.08433.

\bibitem{AS2013}
E. A\"id\'ekon and Z. Shi.
The Seneta--Heyde scaling for the branching random walk. Ann. Probab. 41(3A), 1362--1426 (2013)

\bibitem{ASVY2014}
E. Azmoodeh, T. Sottinen, L. Viitasaari and A. Yazigi.
Necessary and sufficient conditions for H\"older continuity of Gaussian processes.
Statistics \& Probability Letters, Volume 94, November 2014, Pages 230-235.

\bibitem{BKNSW2014}
J. Barral, A. Kupiainen, M. Nikula, E. Saksman and C. Webb.
Critical Mandelbrot's cascades. Commun. Math. Phys. 325(2), 685--711 (2014).

\bibitem{BKNSW2015}
J. Barral, A. Kupiainen, M. Nikula, E. Saksman and C. Webb.
Basic properties of critical lognormal multiplicative chaos. Ann. Probab., Volume 43, Number 5 (2015), 2205-2249.

\bibitem{BW2018}
G. Baverez and M. D. Wong. 
Fusion asymptotics for Liouville correlation functions. Preprint arXiv:1807.10207.

\bibitem{Ber2017}
N. Berestycki. 
An elementary approach to Gaussian multiplicative chaos. Electr. Comm. Probab., vol. 22 (2017), no.27, 1-12.

\bibitem{BL2014}
M. Biskup and O. Louidor. 
Conformal symmetries in the extremal process of two-dimensional discrete Gaussian Free Field.
Preprint arXiv:1410.4676.

\bibitem{BL2016}
M. Biskup and O. Louidor. 
Extreme Local Extrema of Two-Dimensional Discrete Gaussian Free Field. 
Comm. Math. Phys. 345, 271-304 (2016).

\bibitem{BL2018}
M. Biskup and O. Louidor. 
Full extremal process, cluster law and freezing for the two-dimensional discrete Gaussian Free Field. 
Advances in Mathematics 330 (2018) 589--687.

\bibitem{BWW2017}
N. Berestycki, C. Webb, and M. D. Wong. 
Random Hermitian matrices and Gaussian multiplicative chaos. Probab. Theory Relat. Fields (2017). https://doi.org/10.1007/s00440-017-0806-9.

\bibitem{BZ2011}
M. Bramson and O. Zeitouni. 
Tightness of the recentered maximum of the two-dimensional discrete Gaussian free field. Comm. Pure Appl. Math. 65, 1-20. (2011)


\bibitem{CN2019}
R. Chhaibi and J. Najnudel.
On the circle, GMC$^\gamma =\lim_{\leftarrow} C\beta E$ for $\gamma = \sqrt{\frac{2}{\beta}}$.
Preprint arXiv:1904.00578.


\bibitem{DKRV2016}
F. David, A. Kupiainen, R. Rhodes and V. Vargas.
Liouville Quantum Gravity on the Riemann Sphere.
Commun. Math. Phys. (2016) 342: 869. https://doi.org/10.1007/s00220-016-2572-4.

\bibitem{DKRV2017}
F. David, A. Kupiainen, R. Rhodes and V. Vargas.
Renormalizability of Liouville quantum field theory at the Seiberg bound. 
Electron. J. Probab. 22 (2017), paper no. 93, 26 pp. doi:10.1214/17-EJP113. https://projecteuclid.org/euclid.ejp/1509501716

\bibitem{DMS2014}
B. Duplantier, J. Miller and S. Sheffield.
Liouville quantum gravity as a mating of trees.
Preprint arXiv:1409.7055.

\bibitem{DRSV2014a}
B. Duplantier, R. Rhodes, S. Sheffield and V. Vargas.
Critical Gaussian multiplicative chaos: convergence of the derivative martingale.
Ann. Probab., 42(5):1769--1808, 2014.

\bibitem{DRSV2014b} 
B. Duplantier, R. Rhodes, S. Sheffield and V. Vargas. (2014)
Renormalization of Critical Gaussian Multiplicative Chaos and KPZ Relation. 
Comm. Math. Phys. 330 283--330.

\bibitem{DS2011}
B. Duplantier and S. Sheffield.
Liouville quantum gravity and KPZ.
Inventiones mathematicae,  August 2011, Volume 185, Issue 2, pp 333--393.

\bibitem{Fel1971}
W. Feller. 
An Introduction to Probability and Its Applications, vol. II. Wiley, New York (1971).

\bibitem{Kah1985}
J.-P. Kahane.
Sur le chaos multiplicatif. 
Ann. Sci. Math. Qu\'ebec 9 (1985), no. 2, 105--150.

\bibitem{JS2017}
J. Junnila and E. Saksman. 
Uniqueness of critical Gaussian chaos. 
Electron. J. Probab., 22:Paper No. 11, 31, 2017.

\bibitem{JSW2018}
J. Junnila, E. Saksman and C. Webb.
Decompositions of log-correlated fields with applications.
Preprint arXiv:1808.06838.

\bibitem{Kal2017}
O. Kallenberg.
Random measures, theory and applications.
Berlin: Springer, 2017.

\bibitem{KRV2017}
A. Kupiainen, R. Rhodes and V. Vargas.
Integrability of Liouville theory: proof of the DOZZ Formula. 
Preprint arXiv:1707.08785

\bibitem{LOS2016}
G. Lambert, D. Ostrovsky, and N.Simm: Subcritical multiplicative chaos for regularized counting statistics from random matrix theory. Preprint arXiv:1612.02367.

\bibitem{NSW2018}
M. Nikula, E. Saksman and C. Webb.
Multiplicative chaos and the characteristic polynomial of the CUE: the $L^1$-phase.
Preprint arXiv:1806.01831.





\bibitem{Pow2017}
E. Powell.
Critical Gaussian chaos: convergence and uniqueness in the derivative normalisation.
Preprint arXiv:1704.06058

\bibitem{RV2010}
R. Rhodes and V. Vargas.
Multidimensional Multifractal Random Measures
Electron. J. Probab. Volume 15 (2010), paper no. 9, 241--258.

\bibitem{RoV2010} R. Robert and V. Vargas. 
Gaussian multiplicative chaos revisited. 
Ann. Probab., Volume 38, Number 2 (2010), 605-631.

\bibitem{RV2014}
R. Rhodes and V. Vargas.
Gaussian multiplicative chaos and applications: a review. 
Probab. Surv. 11 (2014), 315-- 392.

\bibitem{RV2017} 
R. Rhodes and V. Vagras. 
The tail expansion of Gaussian multiplicative chaos and the Liouville reflection coefficient. 
Preprint arXiv:1710.02096.

\bibitem{RY2004}
D. Revuz and M. Yor.
Continuous Martingales and Brownian Motion. 3rd edition.
Grundlehren der mathematischen Wissenschaften Vol 293, Springer Berlin Heidelberg.

\bibitem{Sha2016}
A. Shamov.
On Gaussian multiplicative chaos.
J. Funct. Anal. 270 (9) (2016) 3224--3261.

\bibitem{Web2015}
C. Webb. The characteristic polynomial of a random unitary matrix and Gaussian multiplicative chaos--the $L^2$-phase. Electron. J. Probab. 20 (2015), no. 104, 21 pp.

\bibitem{Wil1974}
D. Williams.
Path decomposition and continuity of local times for one-dimensional diffusions, I. 
Proceed. of London Math. Soc. (3), 28, 738-768, (1974).

\bibitem{Won2019}
M.D. Wong.
Universal tail profile of Gaussian multiplicative chaos.
Preprint arXiv: 1902.04054.


\end{thebibliography}

\end{document}